\documentclass[12pt]{article}         % Dokumentklasse

                          \usepackage[width=16cm, height=22cm]{geometry}   
 \usepackage{amsmath,amssymb}          % Mathesymbole
\usepackage{hyperref} 
 \usepackage[amsmath,thmmarks,hyperref]{ntheorem} 
 \usepackage{icomma}                   
 \usepackage{enumerate}                
 \usepackage{color}                    
 \usepackage{setspace}                  
 \usepackage{needspace}                
 \usepackage{url}                      
 \usepackage{mathrsfs}                 
 \usepackage{graphicx} 
 \usepackage{extpfeil}

 \usepackage{tikz}                     \usetikzlibrary{arrows,decorations.pathmorphing,backgrounds,positioning,fit}
 \usepackage{slashed}
  \usepackage{tikz-cd}                     
\usepackage{verbatim}

\usepackage{todonotes}

\newcounter{comments}
\newenvironment{displaycomment}{\begin{list}{}{\rightmargin=1cm\leftmargin=1cm}\item\sf\begin{small}}{\end{small}\end{list}}

 \numberwithin{equation}{section}
 
 \usepackage[numbers,square]{natbib}
 \usepackage{changepage}

\theoremstyle{nonumberplain}  
\theoremheaderfont{\itshape}  
\theorembodyfont{\normalfont}  
\theoremseparator{.} 
\theoremsymbol{\ensuremath{\Box}}
\newtheorem{proof}{Proof} 
  
\theoremstyle{plain}  
\theoremheaderfont{\normalfont\bfseries}  
\theorembodyfont{\itshape}  
\theoremsymbol{~}
\theoremseparator{.}  
\newtheorem{proposition}{Proposition}[section]  
\newtheorem{corollary}[proposition]{Corollary}  
\newtheorem{lemma}[proposition]{Lemma}  
\newtheorem{theorem}[proposition]{Theorem}     
\newtheorem{question}[proposition]{Question}

\theoremstyle{nonumberplain}  
\newtheorem{maintheorem}[proposition]{Theorem}   

\theoremstyle{plain}  
\theoremheaderfont{\normalfont\bfseries}  
\theorembodyfont{\normalfont}  
 
\newtheorem{remark}[proposition]{Remark}

\newtheorem{example}[proposition]{Example}  
  
\newtheorem{definition}[proposition]{Definition}

\newcommand{\cB}{\mathcal{B}}
\newcommand{\cC}{\mathcal{C}}

\newcommand{\cE}{\mathcal{E}}

\newcommand{\cU}{\mathcal{U}}
\newcommand{\cV}{\mathcal{V}}
\newcommand{\cW}{\mathcal{W}}
\newcommand{\cX}{\mathcal{X}}
\newcommand{\cY}{\mathcal{Y}}
\newcommand{\cZ}{\mathcal{Z}}

\newcommand{\sA}{\mathscr{A}}
\newcommand{\sB}{\mathscr{B}}
\newcommand{\sC}{\mathscr{C}}

\newcommand{\sT}{\mathscr{T}}

\newcommand{\R}{\mathbb{R}}
\newcommand{\Q}{\mathbb{Q}}
\newcommand{\N}{\mathbb{N}}
\newcommand{\Z}{\mathbb{Z}}

\newcommand{\C}{\mathbb{C}}

\DeclareMathOperator{\Aut}{\operatorname{Aut}}
\DeclareMathOperator{\LAut}{\operatorname{LAut}}

\DeclareMathOperator*{\colim}{\mathrm{colim}}

\newcommand{\id}{\mathrm{id}}

\newcommand{\Net}{\textsc{Net}}
\newcommand{\Az}{\textsc{Az}}
\newcommand{\Loc}{\textsc{Loc}}

\newcommand{\Cir}{\mathrm{Cir}}
\newcommand{\BornCoarse}{\textsc{BornCoarse}}
\newcommand{\Cat}{\textsc{Cat}}

\newcommand{\QCA}{\mathrm{QCA}}

\newcommand{\Ad}{\mathrm{Ad}}

\newcommand{\Spectra}{\textsc{Spectra}}

\newcommand{\CMon}{\texttt{CMon}}
\newcommand{\Spaces}{\textsc{Spaces}}

\title{Quantum cellular automata \\ are a coarse homology theory}
\author{ Matthias Ludewig \\ Universit\"at Greifswald}
\begin{document}

\maketitle

\begin{abstract}
We show that quantum cellular automata naturally form the degree-zero part of a coarse homology theory. 
The recent result of Ji and Yang that the space of QCA forms an $\Omega$-spectrum in the sense of algebraic topology is a direct consequence of the formal properties of coarse homology theories.
\end{abstract}

\section{Introduction}

Quantum cellular automata (QCA) are certain locality preserving automorphisms of $C^*$-algebras that arise as tensor products over matrix algebras attached to points in space.
In a very recent paper \cite{QCAspace}, Ji--Yang prove strong structural results on the  space of QCA, which they call the \emph{QCA conjecture}: The space of QCA on $\Z^n$ is homotopy equivalent to the loop space of the space of QCA on $\Z^{n+1}$.
In this paper, we give a somewhat simplified proof, as well as a conceptual reason for this equivalence: QCA form a coarse homology theory.

\medskip

To define QCA on non-compact spaces, one typically starts with a discrete metric space $X$ and attaches matrix algebras $\sA_x$ to each point in space.
Then, an automorphism $\alpha$ of the infinite tensor product
\[
\sA_X := \bigotimes_{x \in X} \sA_x
\]
is a QCA if there exists $R>0$ such that $\alpha$ sends each of the subalgebras $\sA_x$ to the tensor product of $\sA_y$ over those $y$ in an $R$-neighborhood around $x$.
One of the main points I would like to make in this paper is that the ``correct'' type of space to consider for studying QCA is \emph{not} a metric space but a \emph{bornological coarse space}:
The observation is that only the large scale structure of space matters for QCAs;
however, next to the large scale (coarse) structure, metric spaces also carry an unnecessary small scale structure (a topology, even a uniform structure).

\medskip

A bornological coarse space is a set $X$ together with the information on which sets are to be considered \emph{bounded}, as well as a collection of subsets of $X \times X$ which are called \emph{entourages} or \emph{controlled subsets} and which encode uniform bounds from above on distances (see \S\ref{PreliminariesCoarse} for precise definitions).
The collection $\cB$ of bounded subsets of a bornological coarse space $X$ is partially ordered by inclusion, and a \emph{net} on $X$ is just a functor from the poset $\cB$ to the category of finite-dimensional $C^*$-algebras and injective $*$-homomorphisms, which satisfies a certain cocontinuity hypothesis.
Particularly nice examples are the \emph{local matrix nets} or \emph{spin systems}, which are given by a tensor product of matrix algebras indexed by a locally finite subset of sites.
The crucial observation is now that the notion of QCA can be formulated using only the coarse structure: An automorphism $\alpha$ of a local matrix net is a QCA if there exists an entourage $E \subset X \times X$ such that $\alpha(\sA_x)$ commutes with $\sA_y$ unless $(x, y) \in E$; in other words, only pairs of points from $E$ interact via $\alpha$.

\medskip

The reason for working with the larger class of coarse spaces instead of the more rigid framework of metric spaces is not the desire to study QCA on strange coarse spaces.
Instead, the reason is that---in contrast to the category of metric spaces---the category of bornological coarse spaces is well-behaved enough to do homotopy theory with it \cite{BunkeEngel2020}. 
This leads to the notion of a \emph{coarse homology theory}, which is a functor from bornological coarse spaces to spectra that sends close maps to homotopic maps and satisfies a Mayer--Vietoris axiom for decompositions of spaces.
Important examples include coarse $K$-homology and coarse ordinary homology.
Using the Mayer--Vietoris property of coarse homology theories, the QCA hypothesis $Q(X \otimes \Z) \cong \Omega Q(X)$ from \cite{QCAspace} is a direct consequence of the following theorem, which is the main result of this paper and identifies the QCA group as a generalized homo\-logy invariant of large-scale geometry.

\begin{maintheorem}
There is a coarse homology theory $Q$ such that the degree zero group $Q_0(X)$ is the group $\QCA(X)$ of quantum cellular automata on $X$.
\end{maintheorem}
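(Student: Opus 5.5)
The plan is to build $Q$ by the standard Bunke--Engel recipe: first produce a functor from $\BornCoarse$ to a symmetric monoidal category, then feed it through algebraic $K$-theory or group completion, and finally check the coarse homology axioms on the spectrum level. Concretely, for a bornological coarse space $X$ I would define a category, or better a topological or simplicial category, $\Net(X)$. Its objects are local matrix nets on $X$, that is, locally finite families of matrix algebras indexed by sites. Its morphisms are QCA, meaning $*$-isomorphisms of the quasi-local algebras $\sA_X$ whose propagation is controlled by some entourage. The simplicial structure encodes paths of QCA. Tensor product of nets, given by disjoint union of sites, makes $\Net(X)$ symmetric monoidal. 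A bornological coarse map $f\colon X\to Y$ pushes a net forward along $f$ (tensoring together the algebras over each fibre), and properness ensures local finiteness. This gives a functor $X\mapsto \Net(X)$, and I set $Q(X)$ to be the (connective, or suitably delooped) $K$-theory spectrum of this symmetric monoidal category. Then $Q_0(X)=\pi_0 Q(X)$ is the group completion of isomorphism classes under stabilization. I would identify it with $\QCA(X)$ by the usual trick: a QCA $\alpha$ on a net $\sA$ corresponds to the mapping-torus type class $[\sA,\alpha]-[\sA,\id]$. Alternatively, and more cleanly, I would take the $K_1$-style construction: $\Omega$ of the $K$-theory of the automorphism category, so that $\pi_0$ is automorphisms modulo stabilization and paths, i.e.\ $\QCA(X)$.

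Next come the axioms, each verified at the level of $\Net$.
\begin{enumerate}
\item \emph{Coarse invariance}: if $f,g$ are close, the pushforwards are related by a QCA whose propagation is controlled by the closeness entourage. This gives a natural isomorphism, hence a homotopy. Equivalently, $Q(\{0,1\}_{\max}\otimes X)\to Q(X)$ is an equivalence, because nets on the two copies can be shuffled by a controlled swap.
\item \emph{Flasqueness}: if $X$ admits $f$ close to the identity with $f^n$ eventually leaving bounded sets, the infinite sum $\bigotimes_n f^n_*$ is a well-defined endofunctor $S$, by local finiteness, with $\id\oplus f_*S\cong S$. An Eilenberg swindle then kills $Q(X)$.
\item \emph{$u$-continuity}: QCA have some controlled propagation, so $\Net(X)$ is the filtered colimit over entourages $U$ of $\Net(X_U)$, and $K$-theory commutes with filtered colimits.
\end{enumerate}

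The main obstacle is \emph{coarse excision}: for a complementary pair $(Z,\mathcal{Y})$ we need the square for $Q$ to be cocartesian. I would attack this by proving a fibre sequence $Q(\mathcal{Y}\cap Z)\to Q(Z)\to Q(X,\mathcal{Y})$ via a Karoubi or Verdier-quotient argument at the level of categories. One identifies $\Net(X)$ modulo nets supported near $\mathcal{Y}$ with $\Net(Z)$ modulo nets supported near $\mathcal{Y}\cap Z$. The hard analytic input is then a QCA ``cut'' lemma: a QCA on $X$ restricted away from a thick neighbourhood of $\mathcal{Y}$ can be factored, after stabilizing with ancilla degrees of freedom, as a product of a QCA supported near $Z$ and one supported near $\mathcal{Y}$, up to a finite-depth circuit. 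This is where the known structure theory of QCA (blending and the Ji--Yang type arguments using Gross--Nesme--Vogts--Werner index theory for commutant decomposition) must be invoked. I expect to prove it first for the pushforward of $\alpha$ to a two-set decomposition, by stabilizing with $\alpha^{-1}$ on a copy to get $\alpha\otimes\alpha^{-1}$, which is a circuit, and truncating that circuit along the boundary. Once excision holds, $Q$ is a coarse homology theory, and the identification $Q_0=\QCA$ completes the theorem.
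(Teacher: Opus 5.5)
\paragraph{Gap: the cut lemma is false, and excision is not a degree-zero statement.}
The lemma you plan to prove fails. Take $X=\Z$ with $M_d(\C)$ on every site ($d\geq 2$), $\alpha$ the shift, $Z=\Z_{\geq 0}$ and $\cY=\{\Z_{\leq 0}\}$.

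\begin{itemize}
\item An automorphism acting only near $Z$ is an automorphism of the net on some half-line $\Z_{\geq -R}$. That half-line is flasque, so the class of this automorphism in $\QCA(\Z)$ vanishes (equivalently, its GNVW index is trivial).
\item The same holds for an automorphism acting only near $\Z_{\leq 0}$, and for circuits. Stabilization changes nothing.
\item Hence the shift, which is nontrivial in $\QCA(\Z)\cong\Q_{>0}$, has no such factorization. Passing through $\alpha\otimes\alpha^{-1}$ does not help: truncating that circuit says nothing about $\alpha$ itself.
\end{itemize}

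The obstruction to cutting a QCA is not degree-zero data. In this example $Q(Z)=0$ and $\cY\cap Z$ is bounded. So $\pi_0$ of the cofibre of $Q(\cY\cap Z)\to Q(Z)$ is $Q_{-1}(*)$, a $K_0$-group of \emph{nets}. The shift has to be matched with the class of the net produced at the cut, namely the image of a half-line algebra under $\alpha$. This is the Mayer--Vietoris boundary $\QCA(X\otimes\Z)\to K_0(\Az(X))$.

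Such cut nets are tensor factors of local matrix nets but are in general not local. So a theory built on local matrix nets alone, with a lemma producing QCA pieces, cannot give excision. Also, your category is a symmetric monoidal groupoid under $\otimes$, not an additive category, so there is no Karoubi/Verdier quotient to appeal to.

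\paragraph{What is missing, and how the paper supplies it.}
\begin{enumerate}
\item \emph{Azumaya nets.} The paper enlarges to the groupoid $\Az(X)$ of Azumaya nets, i.e.\ nets $\sA$ with $\sA\otimes\sA'$ isomorphic to a local matrix net. This is forced. Mayer--Vietoris for the flasque halves of $X\otimes\Z$ gives $Q_{-1}(X)\cong\QCA(X\otimes\Z)\cong K_0(\Az(X))$. This is not $K_0(\Loc(X))$ in general: $K_0(\Loc(\Z^{n-1}))=0$ for $n\geq 2$, whereas $K_0(\Az(\Z^{n-1}))\cong\QCA(\Z^n)$.
\item \emph{Excision for symmetric monoidal groupoids.} The paper uses Thomason's double mapping cylinder $P$ of $\Az_{\cY\Cap\cZ}(\cZ)\leftarrow\Az(\cY\Cap\cZ)\to\Az_{\cY\Cap\cZ}(\cY)$, together with Quillen's Theorem A for $T:P\to\Az_{\cY\Cap\cZ}(\cY\Cup\cZ)$, $(\sA,\sB^{\cY},\sB^{\cZ})\mapsto\sA\otimes\sB^{\cY}\otimes\sB^{\cZ}$. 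Each comma category $\sC\downarrow T$ is a filtered union of subcategories with initial object $\texttt{split}_{Y_E,Z_E}(\sC)$. The key input is that images of tensor factors and nested tensor factors are again tensor factors.
\item \emph{An explicit nonconnective delooping.} Connective $K$-theory cannot satisfy Mayer--Vietoris, so your ``suitably delooped'' leaves the essential step open. The paper applies the cartesian square to the flasque halves of $X\otimes\R$ to get $K(\Az(X))\to\Omega K(\Az(X\otimes\R))$, an isomorphism in degrees $\geq 0$. It then sets $Q(X)=\colim_n\Omega^{n+1}K(\Az(X\otimes\R^n))$.
\item \emph{The identification $Q_0(X)=K_1(\Az(X))\cong\QCA(X)$.} You assert this, but it needs proof in both directions.
\begin{itemize}
\item Coarsely local automorphisms die in $K_1$. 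By Goto, every element of $PU(n)$ is a commutator, so such an automorphism is a commutator of two controlled automorphisms.
\item Conjugation by arbitrary, non-local isomorphisms is trivial in $\QCA(X)$. This follows from a swap trick plus commutativity via Eckmann--Hilton.
\end{itemize}
Quotienting by ``paths'' instead would require a topology in which precisely the coarsely local automorphisms are connected to the identity. You do not specify one.
\end{enumerate}
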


The category of local matrix nets is too rigid for the homotopical constructions required for the construction of $Q$.
To remedy this, inspired by \cite{QCAspace}, we introduce in this paper the new notion of an \emph{Azumaya net}, which plays a role analogous to Azumaya algebras in algebra.
These are nets $\sA$ such that there exists another net $\sA'$ on the same space such that we have a (controlled) isomorphism
\[
\sA \otimes \sA' \cong \sB
\]
of nets, where $\sB$ is a local matrix net.
This is a straightforward generalization of the notion of an Azumaya algebra over a commutative ring $R$, which is an $R$-algebra $A$ such that there exists an $R$-algebra $A'$ with $A \otimes_R A' \cong M_n(R)$.
Denoting by $\Az(X)$ the symmetric monoidal category of Azumaya nets over $X$, there are natural maps $K(\Az(X)) \to \Omega K(\Az(X \otimes \R))$, where $K$ denotes the algebraic $K$-theory spectrum for symmetric monoidal categories.
The spectrum $Q(X)$ is then defined as
\[
Q(X) = \colim_{n \to \infty} \Omega^{n+1} K(\Az(X \otimes \R^n)),
\]
where the colimit is taken with respect to the maps above.
One further new result that follows directly from this construction is that we have a canonical isomorphism
\[
\QCA(\Z^n) \cong K_0(\Az(\Z^{n-1})).
\]
In other words, the group of QCA on $\Z^n$ is identified with the Grothendieck group of Azumaya nets in one dimension lower.
For $n=1$, this isomorphism is precisely the GNVW index \cite{GrossNesmeVogtsWerner2012}, but in higher dimensions, this seems to be new.

\paragraph{Acknowledgements.}

I would like to thank Christoph Winges, Daniel Kasprowski and Ulrich Bunke for helpful discussions.
I also gratefully
acknowledge support from SFB 1085 ``Higher invariants'' funded by the German Research
Foundation (DFG).

\tableofcontents

\section{Nets and QCA on coarse spaces}

In this section, we give a brief recollection of the basic notions of coarse geometry, before developing the theory of nets on bornological coarse spaces and giving the definition of the group of quantum cellular automata.

\subsection{Preliminaries on coarse spaces}
\label{PreliminariesCoarse}

\begin{definition}
Let $X$ be a set.
\begin{enumerate}[(1)]
\item
A \emph{bornology} on a set $X$ is a collection $\cB$ of subsets of $X$ (called \emph{bounded} subsets) which is closed under taking finite unions and subsets and with the property that $\bigcup \cB = X$.
A \emph{bornological space} is a set together with a bornology.
\item
A \emph{coarse structure} on $X$ is a collection $\cC$ of subsets of $X \times X$ (called \emph{entourages}), which contains the diagonal
\[
\Delta = \{(x, x) \mid x \in X\}
\] 
and which is closed under taking subsets, finite unions, compositions
\[
E \circ F = \{(x, z) \in X \times X \mid \exists y \in X : (x, y) \in F, (y, z) \in E\}
\]
and inverses 
\[
E^{-1} = \{(y, x) \in X \times X \mid (x, y) \in E\}.
\]
A \emph{coarse space} is a set together with a coarse structure.
\end{enumerate}
\end{definition}

For a subset $Y \subseteq X$ of a coarse space $X$ and an entourage $E$, we denote the \emph{$E$-fattening of $Y$} by
\[
Y_E = \{ x \in X \mid \exists y \in Y : (x, y) \in E\}.
\]	

\begin{definition}[bornological coarse spaces \cite{BunkeEngel2020}]
A \emph{bornological coarse space} is a set $X$ together with a bornology $\cB$ and a coarse structure $\cC$, which are compatible in the sense that for any $B \subseteq X$ and any entourage $E \in \cC$, the fattening $B_E$ is bounded again.
\end{definition}

We usually just write $X$ instead of $(X, \cB, \cC)$ for a bornological coarse space, suppressing the bornology and coarse structure in notation.

\begin{remark}
If $\cC$ is a coarse structure on a set $X$, then there is a canonical bornology determined by the coarse structure, consisting of those subsets $B \subseteq X$ such that $B \times B$ is an entourage of $\cC$.
Hence any coarse space is canonically a bornological coarse space.
However, bornological coarse spaces allow for more flexibility with respect to the bornology, which is sometimes useful.
\end{remark}

\begin{example}
If $\cE$ is any collection of subsets of $X \times X$  with $\Delta \subseteq \bigcup\cE$, then one can consider the smallest coarse structure $\cC = \langle \cE\rangle$ containing $\cE$, the \emph{coarse structure generated by $\cE$}.
\end{example}

\begin{example}
If $d$ is a (pseudo-) metric on $X$, then the \emph{metric coarse structure} is the coarse structure generated by the entourages $\{(x, y) \in X \times X \mid d(x, y) \leq R\}$ for $R \geq 0$.
The bornology induced by this coarse structure is the collection of subsets of $X$ with finite $d$-diameter.
Whenever we view metric spaces as coarse spaces (such as $\Z^n$ or $\R^n$), we always understand them to be equipped with the metric coarse structure and the induced bornology, unless otherwise specified.
\end{example}

\begin{example}
\label{TensorProductBornCoarse}
Given two bornological coarse spaces $X$ and $X'$, there is a bornological coarse space $X \otimes X'$ which has underlying set $X \times X'$, bornology $\cB \times \cB'$ and coarse structure $\cC \times \cC'$, the elementwise cartesian product. 
As noted in \cite[Example 2.32.]{BunkeEngel2020}, the bornological coarse space $X \otimes X'$ is \emph{not} the categorical product of $X$ and $X'$; the latter does exist and has the same underlying set and coarse structure, but a different bornology.
The product $\otimes$ yields an auxiliary symmetric monoidal structure on $\BornCoarse$.
In particular, we have canonical identifications
\[
\R^{n-1} \otimes \R = \R^n, \qquad \Z^{n-1} \otimes \Z \cong \Z^n.
\]
\end{example}

\begin{definition}[coarse maps]
Let $X$ and $X'$ be bornological coarse spaces and let $f : X \to X'$ be a map.
\begin{enumerate}[(1)]
	\item $f$ is called \emph{proper} if for every $B' \subseteq X'$ bounded, $f^{-1}(B') \subseteq X$ is bounded.
	\item $f$ is \emph{controlled} if for every entourage $E$ of $X$, $(f \times f)(E)$ is an entourage of $X'$.
	\item $f$ is called \emph{coarse} if it is proper and controlled.
\end{enumerate}
We denote by $\BornCoarse$ the category of bornological coarse spaces and coarse maps. 
\end{definition}

What allows us to do homotopy theory with bornological coarse spaces is the following notion of equivalence for coarse maps.

\begin{definition}[closeness]
Let $X$ and $X'$ be coarse spaces.
\begin{enumerate}[(1)]
	\item $f, g : X \to X'$ are \emph{close} if the set $(f \times g)(\Delta) = \{(f(x), g(x)) \mid x \in X\}$ is an entourage of $X'$.
	\item A coarse map $f : X \to X'$ is a \emph{coarse equivalence} if there exists a coarse map $g : X' \to X$ such that the compositions $f \circ g$ and $g \circ f$ are each close to the identity.
\end{enumerate} 
\end{definition}

\begin{example}
For any dimension $n$, the inclusion maps $\Z^n \hookrightarrow \R^n$ are coarse equivalences.
However, $\R^n$ is not coarsely equivalent to $\R^m$ for $n \neq m$.
\end{example}

To formulate support conditions in coarse geometry that are invariant under coarse fattenings, we use the notion of a big family, defined as follows.

\begin{definition}
	A \emph{big family on} $X$ is a collection $\cY$ of subsets of $X$ that is closed under taking subsets, finite unions and fattenings.
\end{definition}

Any subset $Y \subseteq X$ generates a big family of $X$ that we denote by $\{Y\}$, given by
\[
\{Y\} = \{Z \subseteq X \mid \exists E \in \cC : Z \subseteq Y_E\}.
\]
If $\cY$ and $\cZ$ are big families, then their elementwise union and intersection
\begin{align*}
\cY \Cup \cZ &= \{ Y \cup Z \mid Y \in \cY, Z \in \cZ\}
\\
\cY \Cap \cZ &= \{ Y \cap Z \mid Y \in \cY, Z \in \cZ\}
\end{align*}
are again big families.

\medskip

In the ordinary homotopy theory of topological spaces, contractible spaces are \emph{motivically trivial}, in the sense that they evaluate trivially on any homology theory. 
In the theory of coarse spaces, this role is played by so-called flasque spaces:

\begin{definition}[flasqueness]
A bornological coarse space $X$ is called \emph{flasque} if there exists a coarse map $f : X \to X$ with the following properties:
\begin{enumerate}[(1)]
\item $f$ is close to the identity map of $X$;
	\item For every bounded set $B \subseteq X$, there exists $n \in \N$ such that $f^n(X) \cap B = \emptyset$, where $f^n$ is the $n$-th iterate of $f$;
	\item 
	For every entourage $E$, the set
	\[
	\bigcup_{n \in \N} (f^n \times f^n)(E)
	\]
	is an entourage.
\end{enumerate}
Such a map $f$ is called a \emph{map witnessing the flasqueness}.
\end{definition}

\subsection{Nets on coarse spaces}

Let $X$ be a bornological coarse space.
The bornology $\cB$ of $X$ is a partially ordered set and hence may be viewed as a category with exactly one morphism for each inclusion $B \subseteq B'$.

\begin{definition}[nets]
A \emph{net} on $X$ is a functor
\[
\sA : \cB \longrightarrow \left\{\substack{
\text{category of}
\\
\text{finite-dimensional}
\\
\text{$C^*$-algebras}
\\
\text{and injective}
\\
\text{ $*$-homomorphisms}} \right\}, \quad~~ B \longmapsto \sA_B.
\]
with the following property:
Whenever $B \subseteq X$ is bounded and $\cB' \subseteq \cB$ is a directed subset such that $\bigcup \cB' = B$, then canonical map
\begin{equation}
\label{PropertyNet}
\colim_{B' \in \cB'} \sA_{B'} \longrightarrow \sA_B
\end{equation}
is an isomorphism.
\end{definition}

The property \eqref{PropertyNet} of the functor $\sA$ is there to avoid pathological phenomena, see Remark~\ref{RemarkLocallyFiniteSupport} below.
Given a net $\sA$, we define for any subset $Y \subseteq X$
\[
\sA_Y := \colim_{\substack{B \in \cB \\B \subseteq Y }} \sA_B,
\]
where the colimit is taken in the category of $*$-algebras.
We could also take the limit in the category of $C^*$-algebras (which would amount to taking the completion of $\sA_Y$), but for the current paper it is more convenient to work with uncompleted algebras.
By \eqref{PropertyNet}, we can as well take the colimit over an arbitrary directed system of bounded subsets covering $B$ instead.

Since the structure maps of $\sA$ are required to be injective, all the algebras $\sA_Y$ for $Y \subseteq  X$ may be viewed as subalgebras of $\sA_X$, and making these identifications, $\sA_X$ is just the union of all $\sA_B$.

\begin{definition}
If $\sA$ and $\sB$ are nets on $X$, their \emph{tensor product} $\sA \otimes \sB$ is defined by
\[
(\sA \otimes \sB)_B := \sA_B \otimes \sB_B.
\]
\end{definition}

\begin{definition}[homomorphisms]
Let $\sA$, $\sB$ be two nets.
A \emph{homomorphism} of nets $\varphi : \sA \to \sB$ is a $*$-homomorphism $\varphi: \sA_X \to \sB_X$ which is controlled in the sense that there exists an entourage $E$ of $X$ such that for all bounded sets $B \subseteq X$, we have
\[
\varphi(\sA_B) \subseteq \sB_{B_E}.
\]
In this case, we say more precisely, that $\varphi$ is \emph{$E$-controlled}.
We call $\varphi$ \emph{local} if it is $\Delta$-controlled.
\end{definition}

Let $\varphi : \sA \to \sB$ be a homomorphism of nets such that its controlled $*$-homomorphism $\sA_X \to \sB_X$ is an isomorphism. 
If the inverse $*$-homomorphism is again controlled, then we say that $\varphi$ is an \emph{isomorphism} of nets.
We denote by $\Net(X)$ the groupoid of nets on $X$ whose morphisms are (controlled) isomorphisms of nets.

\begin{definition}[types of nets]
Let $\sA$ be a net on $X$.
\begin{enumerate}[(1)]
\item $\sA$ is \emph{semilocal} if whenever $Y, Y' \subseteq X$ satisfy $Y \cap Y' = \emptyset$, then $\sA_Y$ and $\sA_{Y'}$ commute inside $\sA_{Y \sqcup Y'}$.
	\item 
	$\sA$ is \emph{local} if it is semilocal and whenever $Y, Y' \subseteq X$ satisfy $Y \cap Y' = \emptyset$, then the inclusion maps induce an isomorphism 
\[
\sA_Y \otimes \sA_{Y'} \cong \sA_{Y \sqcup Y'}.
\]
	\item 
	$\sA$ is called a \emph{matrix net} if $\sA_B$ is a full matrix algebra for each $B \subseteq X$ bounded.
	\item 
	$\sA$ is called \emph{Azumaya} if there exists a net $\sA'$ on $X$ and a local matrix net $\sB$ on $X$ such that 
	\[
	\sA \otimes \sA' \cong \sB.
	\]
\end{enumerate}
We write
\[
\Loc(X) \subseteq \Az(X) \subseteq \Net(X) 
\]
for the full subgroupoids of local matrix nets, respectively Azumaya nets.
\end{definition}

\begin{example}
\label{ExampleCoordinateNets}
Whenever $q : X \to \N$ is a function with locally finite support (meaning that for each bounded set $B \subseteq X$, we have $q(x) = 1$ for all but finitely many $x \in B$), we obtain a local matrix net by setting
\[
\sA_B := \bigotimes_{x \in B} M_{q(x)}(\C),
\]
where $M_q(\C)$ denotes the algebra of complex $q \times q$ matrices.
This is the class of nets considered in \cite{QCAspace}.
Any local matrix net is isomorphic in $\Net(X)$ to one of this form.
The more general class of local matrix nets may be viewed as a ``coordinate free version'' of these nets.
\end{example}

\begin{remark}
While the terminology ``local matrix net'' seems more appropriate in our context, we remark that these are often called \emph{spin system} in the mathematical physics literature.
\end{remark}

\begin{remark}
It follows from Corollary~\ref{CorollaryAzumayaSubnet} that any Azumaya net on $X$ is isomorphic to a semilocal net $\sA$, because subnets of local nets are necessarily semilocal.
Throughout, we may therefore replace $\Az(X)$ by the full subcategory of semilocal Azumaya nets.
\end{remark}

\begin{definition}[support]
Let $\sA$ be a net on $X$.
A subset $Y \subseteq X$ is a \emph{support} for $\sA$ if 
for every $B \in \cB$, we have $\sA_B = \sA_{Y \cap B}$.
If $\cY$ is a big family in $X$, we write
\[
\Loc(\cY) \subseteq \Loc(X), \qquad  \Az(\cY) \subseteq \Az(X), \qquad \Net(\cY) \subseteq \Net(X)
\]
for the full subcategories of those nets that are supported on some member $Y$ of $\cY$.
\end{definition}

\begin{remark}
\label{RemarkLocallyFiniteSupport}
Any local net has a locally finite support $Y$, meaning that $Y \cap B$ is finite for each $B \subseteq X$ bounded.
Indeed, by locality, we have 
\[
\sA_F = \bigotimes_{x \in F} \sA_x
\]
for each finite subset $F \subseteq X$.
Now by \eqref{PropertyNet}, we have
\[
\sA_B = \colim_{F \subseteq B ~\text{finite}} \sA_F = \bigotimes_{x \in B} \sA_x.
\]
Since $\sA_B$ is finite-dimensional, we have $\sA_x \neq \C$ for only finitely many $x \in B$.
Without \eqref{PropertyNet} on the functor $\sA$, there are pathological examples of local nets without locally finite support using the axiom of choice.
Namely, if $\cU$ is a free ultrafilter on $X$ (which always exists as soon as $X$ is infinite), we may define a function $\mu : \cB \to \{0, 1\}$ by $\mu(B) = 0$ if $B \notin \cU$ and $\mu(B) = 1$ if $B \in \cU$.
This function is additive because filters cannot contain two disjoint sets.
Therefore, setting
\[
\sA_B := M_2(\C)^{\otimes \mu(B)}
\]
defines a local net on $X$.
$\sA$ does not have locally finite support because $\cU$ does not contain any finite sets.
\end{remark}

If $\sA$ is a net on $Y\subseteq X$, we obtain a net $\tilde{\sA}$ on $X$ given by $\tilde{\sA}_B := \sA_{B \cap Y}$.
This gives identifications of nets on $Y$ with nets on $X$.
Using these identifications, we have canonical isomorphisms
\begin{equation}
\label{NetsOnFamiliesAsColimits}
\begin{aligned}
 \Az(\cY) &\cong \colim_{Y \in \cY} \Az(Y), \\ 
 \Loc(\cY) &\cong \colim_{Y \in \cY} \Loc(Y), \\
 \Net(\cY) &\cong \colim_{Y \in \cY} \Net(Y).
 \end{aligned}
\end{equation}

\subsection{Nets and coarse maps}

If $f : X \to X'$ is a coarse map, then we may form the pushforward net $f_*\sA$, given by $(f_*\sA)_{B'} = \sA_{f^{-1}(B')}$ for $B' \subseteq X'$ bounded, which is a net on $X'$.
This is well defined because $f$ is proper.

\medskip

We have
\[
(f_*\sA)_{X'} 
= \colim_{B' \in \cB'} (f_*\sA)_{B'}
= \colim_{B' \in \cB'} \sA_{f^{-1}(B')} = \sA_X,
\]
since $\{f^{-1}(B') \mid B' \in \cB'\} \subseteq \cB$ is cofinal.
Hence if $\varphi : \sA \to \sB$ is a morphism of nets, then we obtain a $*$-homomorphism 
\[
f_* \varphi : (f_*\sA)_X = \sA_X \xrightarrow{~~\varphi_X~~} \sB_X = (f_*\sB)_X.
\]
If $\varphi$ is $E$-controlled, then for each $B' \subseteq X'$ bounded, we have
\[
\varphi((f_*\sA)_{B'}) = \varphi(\sA_{f^{-1}(B')}) \subseteq \sB_{f^{-1}(B')_E} \subseteq \sB_{f^{-1}(B'_{(f\times f)(E)})} = (f_*\sB)_{B'_{(f\times f)(E)}},
\]
hence $f_* \varphi$ is $(f\times f)(E)$-controlled.

\medskip

The pushforward functor $f_*$ respects tensor products, in the sense that there are canonical isomorphisms
\[
f_*(\sA \otimes \sB) \cong f_*\sA \otimes f_* \sB.
\]
%
\begin{comment}
\begin{lemma}
Let $f : X \to X'$ be a coarse map.
If the net $\sA$ on $X$ is coarsely local, then $f_*\sA$ is coarsely local as well.
Similarly, if $\sA$ is local, then $f_*\sA$ is local as well.
\end{lemma}

\begin{proof}
Let $Y' \subseteq X'$ be arbitrary.
Let $E$ be an entourage of $X$ controlling the coarse locality of $\sA$.
Then there exists a tensor product decomposition $\sA \cong \sB \otimes \sB'$ with $\sB$ supported on $f^{-1}(Y')_E$ and $\sB'$ supported on $(X \setminus f^{-1}(Y'))_E$.
We then get
\[
f_*\sA \cong f_*\sB \otimes f_*\sB'.
\]
Since
\begin{align*}
f\bigl(f^{-1}(Y')_E\bigr) 
&= \bigl\{ f(x) \mid \exists y \in X :  (x, y) \in E, f(y) \in Y'\bigr\} = Y'_{(f\times f)(E)}
\end{align*}
the factor $f_*\sB$ is supported on $Y'_{(f\times f)(E)}$ and a similar calculation shows that $f_*\sB'$ is supported on $(X' \setminus Y')_{(f\times f)(E)}$.
That $f_*\sA$ is local if $\sA$ is local follows from setting $E = \Delta$ in the above calculation.
\end{proof}
\end{comment}
%
Combining the above observations, we obtain that for each coarse map $f: X \to X'$, we get a symmetric monoidal functor
\begin{align*}
f_* : \Net(X) &\longrightarrow \Net(X').
\end{align*}
One easily checks that the pushforward of a local net is again local. It follows that $f_*$ also sends Azumaya nets to Azumaya nets, so $f_*$ restricts to functors
\begin{align*}
f_* :\Az(X) &\longrightarrow\Az(X'),
 \\
f_* :\Loc(X) &\longrightarrow\Loc(X').
\end{align*}
If $f: X \to X'$ and $g : X' \to X''$ are two coarse maps, then we have obvious natural isomorphisms of functors
\[
(g \circ f)_* \cong g_*  f_*.
\]
These structures assemble to (pseudo-)functors
\[
\Net, \Az, \Loc : \BornCoarse \longrightarrow \CMon(\Cat)
\]
from bornological coarse spaces to the bicategory of symmetric monoidal categories (i.e., commutative monoid objects in $\Cat$).

\begin{proposition}
\label{PropCloseness}
Suppose that $f, g : X \to X'$ are close. 
Then we have natural isomorphisms of functors
\[
f_* \cong g_* : \Net(X) \to \Net(X').
\]
Since they are full subcategories, the same is true for the pushforward functors on Azumaya and local nets.
\end{proposition}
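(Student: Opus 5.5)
The plan is to take the identity map as the natural isomorphism: for each net $\sA$ on $X$, let
\[
\eta_\sA := \id_{\sA_X} : (f_*\sA)_{X'} = \sA_X \longrightarrow \sA_X = (g_*\sA)_{X'}.
\]
As computed above, the global algebras of $f_*\sA$ and $g_*\sA$ are both canonically $\sA_X$. So the proof has two parts. First, show that this identity is a controlled $*$-homomorphism in both directions, making it an isomorphism in $\Net(X')$. Second, check naturality and compatibility with the monoidal structure.

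The key step is controlledness. Let $F := (f \times g)(\Delta)$, which is an entourage of $X'$ by closeness. Let $B' \subseteq X'$ be bounded and $x \in f^{-1}(B')$. Then $(g(x), f(x)) \in F^{-1}$ with $f(x) \in B'$, so $g(x) \in B'_{F^{-1}}$. Hence $f^{-1}(B') \subseteq g^{-1}(B'_{F^{-1}})$, and functoriality of $\sA$ gives
\[
\eta_\sA\bigl((f_*\sA)_{B'}\bigr) = \sA_{f^{-1}(B')} \subseteq \sA_{g^{-1}(B'_{F^{-1}})} = (g_*\sA)_{B'_{F^{-1}}}.
\]
Here $B'_{F^{-1}}$ is bounded by the compatibility of bornology and coarse structure. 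Thus $\eta_\sA$ is $F^{-1}$-controlled. By the symmetric argument with $f$ and $g$ swapped, its inverse (again the identity) is $F$-controlled, so $\eta_\sA$ is an isomorphism of nets. One small point to verify is that the inclusions $\sA_{f^{-1}(B')} \subseteq \sA_X$ are compatible. They are, because all these algebras are subalgebras of $\sA_X$ via the structure maps.

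Naturality is then immediate. For a morphism $\varphi : \sA \to \sB$ in $\Net(X)$, both $f_*\varphi$ and $g_*\varphi$ are the same underlying $*$-homomorphism $\varphi_X : \sA_X \to \sB_X$. The required square therefore reads $\varphi_X \circ \id = \id \circ \varphi_X$. Monoidality holds similarly: under the canonical identifications $f_*(\sA \otimes \sB) \cong f_*\sA \otimes f_*\sB$, everything is the identity on $\sA_X \otimes \sB_X$.

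For the last sentence of the statement, $\Az(X)$ and $\Loc(X)$ are full subgroupoids, and $f_*$ and $g_*$ preserve them. So the components $\eta_\sA$ are morphisms in these subcategories, and the natural isomorphism restricts.

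I don't expect a real obstacle. The only care needed is in choosing the direction of the entourage ($F$ versus $F^{-1}$) and in confirming that fattenings of bounded sets stay bounded, so that the targets $(g_*\sA)_{B'_{F^{-1}}}$ are defined.
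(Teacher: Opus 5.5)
Your proof is correct and follows the paper's argument: the identity on $\sA_X$ serves as the natural isomorphism, controlled by $(g\times f)(\Delta)$ (your $F^{-1}$), with its inverse controlled by the inverse entourage. You state the key step as the inclusion $f^{-1}(B') \subseteq g^{-1}(B'_{F^{-1}})$, which is the right form; the paper writes it as an equality, but that equality holds only as an inclusion in general, and the inclusion is all that controlledness needs.
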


\begin{proof}
For any net $\sA$ on $X$, we have the identification  $(f_*\sA)_X = \sA_X = (g_*\sA)_X$ of the global algebras. 
This is clearly natural in $\sA$, but we need to show that this isomorphism is controlled.
To this end, let $E = (g \times f)(\Delta)$, which is an entourage of $X'$ as $f$ and $g$ are close.
Then 
\begin{align*}
g^{-1}(B'_E) 
&= \{z \in X \mid g(z) \in B'_E\}
\\
&= \{z \in X \mid \exists x' \in B' : (g(z), x') \in E\}
\\
&= \{x \in X \mid f(x) \in B'\}
\\
&= f^{-1}(B'),
\end{align*}
hence
\[
(f_*\sA)_{B'} = \sA_{f^{-1}(B')} = \sA_{g^{-1}(B'_E)} = (g_*\sA)_{B'_E}.
\]
Hence the canonical $*$-isomorphism $(f_*\sA)_X \to (g_*\sA)_X$ is controlled by $E$.
Swapping the roles of $f$ and $g$, we see that the inverse is controlled by the inverse entourage $E^{-1}$.
\end{proof}

\begin{corollary}
\label{CorollaryEquivalenceBigFamily}
For any subset $Y \subseteq X$, the canonical functor
\[
\Net(Y)\longrightarrow \Net(\{Y\})
\]
is an equivalence.
The same is true for the subcategories of Azumaya, respectively local nets.
\end{corollary}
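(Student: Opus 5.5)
The canonical functor $\Net(Y) \to \Net(\{Y\})$ is the composite of $\Net(Y) \to \Net(Y_E)$ (extension by the inclusion) and the colimit description \eqref{NetsOnFamiliesAsColimits}. By \eqref{NetsOnFamiliesAsColimits}, $\Net(\{Y\}) \cong \colim_{Z \in \{Y\}} \Net(Z)$. The sets $Y_E$, with $E$ ranging over entourages containing the diagonal, are cofinal in $\{Y\}$. A filtered colimit of categories along functors that are all equivalences is equivalent to each of its terms. So it suffices to show that for every entourage $E \supseteq \Delta$, the pushforward $i_* : \Net(Y) \to \Net(Y_E)$ along the inclusion $i : Y \hookrightarrow Y_E$ is an equivalence. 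Here $Y$ and $Y_E$ carry the induced bornological coarse structures, and I would check that extension by zero, $\tilde{\sA}_B = \sA_{B\cap Y}$, agrees with $i_*$.

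To prove that $i_*$ is an equivalence, I would exhibit $i$ as a coarse equivalence. For each $x \in Y_E$, choose $r(x) \in Y$ with $(x, r(x)) \in E$, and take $r(y) = y$ for $y \in Y$. The following checks are routine.
\begin{enumerate}[(1)]
\item $r$ is controlled: $(r\times r)(F) \subseteq E^{-1}\circ F \circ E$.
\item $r$ is proper: $r^{-1}(B) \subseteq B_{E}$, which is bounded by compatibility of bornology and coarse structure.
\item $r \circ i = \id_Y$.
\item $i \circ r$ is close to $\id_{Y_E}$, via the entourage $E$ or $E^{-1}$.
\end{enumerate}
Proposition~\ref{PropCloseness} then gives $i_* r_* \cong (i\circ r)_* \cong \id$ and $r_* i_* \cong (r \circ i)_* = \id$, using the pseudofunctoriality isomorphisms $(g\circ f)_* \cong g_* f_*$. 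Hence $i_*$ is an equivalence.

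For the Azumaya and local subcategories, it is enough that $i_*$ and $r_*$ preserve these classes, which was noted above. Since these are full subcategories, the same natural isomorphisms restrict, and the colimit argument goes through verbatim.

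The main obstacle I expect is bookkeeping rather than substance. One point is to confirm that nets on the subspace $Y$ (with the induced bornology $\{B\cap Y\}$) correspond exactly to extension by $i_*$, including the colimit axiom \eqref{PropertyNet}. The other is the passage to the filtered colimit. The transition functors $\Net(Y_E) \to \Net(Y_{E'})$ are themselves equivalences by the same argument, so the colimit is equivalent to $\Net(Y)$. Alternatively, one can argue directly: essential surjectivity holds because any net supported on some $Y_E$ is isomorphic to $i_*r_*$ of it, and full faithfulness holds because $i_*$ is full and faithful onto isomorphisms between nets in the image, since the morphisms are just controlled $*$-isomorphisms of global algebras.
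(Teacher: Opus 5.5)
Your proposal is correct and is essentially the paper's argument: the paper's one-line proof says that the inclusions $Y_E \hookrightarrow Y_F$ are coarse equivalences and invokes Prop.~\ref{PropCloseness} on the colimit description. You do the same with $E=\Delta$, adding the explicit coarse inverse $r$ and the filtered-colimit bookkeeping that the paper leaves implicit. One harmless slip: with the paper's composition convention, the control bound in your step (1) should read $E\circ F\circ E^{-1}$.
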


\begin{proof}
This follows from Prop.~\ref{PropCloseness}, as for all entourages $E$ and $F$ of $X$ with $E \subseteq F$, the inclusion map $Y_E \hookrightarrow Y_F$ is a coarse equivalence.
\end{proof}

\begin{proposition}
\label{PropFlasque}
If $X$ is flasque, then there exists a symmetric monoidal and faithful endofunctor $S : \Net(X) \to \Net(X)$ together with a natural isomorphism
\[
S \cong \id \otimes S.
\]
This functor restricts to a functor on $\Az(X)$ and $\Loc(X)$ (and so does the natural transformation because both are full subcategories).
\end{proposition}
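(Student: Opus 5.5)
The plan is an Eilenberg swindle. Let $f : X \to X$ witness the flasqueness and put
\[
S\sA := \bigotimes_{n \geq 0} f^n_*\sA,
\]
with $f^0 = \id$. This is a net: for bounded $B \subseteq X$ we set
\[
(S\sA)_B := \bigotimes_{n \geq 0} \sA_{(f^n)^{-1}(B)}.
\]
By property (2) of flasqueness, $f^n(X) \cap B = \emptyset$ for $n$ large, so $(f^n)^{-1}(B) = \emptyset$. Hence only finitely many factors are nontrivial (we use $\sA_\emptyset = \C$ as the unit), and the product is finite-dimensional. Functoriality in $B$ and the cocontinuity property \eqref{PropertyNet} follow factorwise, since only finitely many factors ever vary over a directed family covering $B$.

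On morphisms, an $E$-controlled $\varphi$ is sent to $S\varphi := \bigotimes_n f^n_*\varphi$, which is a well-defined $*$-homomorphism on the algebraic infinite tensor product $(S\sA)_X = \bigotimes_n \sA_X$ (the colimit of finite tensor products). Its factor $f^n_*\varphi$ is $(f^n\times f^n)(E)$-controlled. By property (3), $F := \bigcup_n (f^n \times f^n)(E)$ is an entourage, so $S\varphi$ is $F$-controlled, and likewise for the inverse. Symmetric monoidality $S(\sA\otimes\sB) \cong S\sA \otimes S\sB$ is the shuffle isomorphism of tensor factors, which is local. Faithfulness follows because $\varphi$ is recovered from $S\varphi$ by restricting to the $n=0$ factor.

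For the natural isomorphism, write
\[
S\sA = \sA \otimes \bigotimes_{n\ge1} f^n_*\sA = \sA \otimes f_*(S\sA).
\]
By Proposition~\ref{PropCloseness}, since $f$ is close to $\id$, there is an isomorphism $f_* \cong \id$. The main obstacle is to apply this to the nets $S\sA$ naturally and with a controlled inverse. Concretely, the isomorphism is the identity on the global algebras, and by the proof of Proposition~\ref{PropCloseness} it is controlled by $E_0 = (\id\times f)(\Delta)$ and its inverse. This bound is uniform, independent of $\sA$, so it gives a natural isomorphism $\id \otimes f_*S \cong \id \otimes S$. Composing yields $S \cong \id\otimes S$. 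The underlying map is just the identity of $\bigotimes_{n\ge0}\sA_X$, regrouped so that factor $n$ is viewed as the $(n{+}1)$-st factor of $S$ when $n \geq 1$. Its control needs only the single entourage $E_0$ composed with the $f$-iterate bounds, and naturality in $\varphi$ is then immediate.

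Finally, $S$ preserves $\Loc(X)$ because pushforwards and (locally finite) tensor products of local matrix nets are local matrix nets. The locally finite condition holds by the same finiteness argument from property (2). $S$ preserves $\Az(X)$: if $\sA\otimes\sA'\cong\sB$ with $\sB$ local, then $S\sA\otimes S\sA' \cong S(\sA\otimes\sA') \cong S\sB$ by monoidality and functoriality, and $S\sB$ is local.
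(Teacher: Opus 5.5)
Your proposal is correct and is essentially the paper's proof: the same Eilenberg swindle $S\sA=\bigotimes_{n\ge 0}f^n_*\sA$, control of $S\varphi$ by $\bigcup_n (f^n\times f^n)(E)$ via property (3) of flasqueness, and the isomorphism $S\sA=\sA\otimes f_*S\sA\cong \sA\otimes S\sA$ coming from Proposition~\ref{PropCloseness}. You also spell out several points the paper only asserts: faithfulness via the $n=0$ factor, monoidality via the shuffle isomorphism, and preservation of $\Az(X)$ via $S\sA\otimes S\sA'\cong S(\sA\otimes\sA')\cong S\sB$.
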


\begin{proof}
Let $f: X \to X$ be the map witnessing flasqueness. 
For any net $\sA$ on $X$, we set
\begin{equation*}
S(\sA) := \sA \otimes f_*\sA \otimes f_*^2 \sA \otimes f_*^3 \sA \otimes \cdots.
\end{equation*}
This infinite tensor product is to be interpreted as
\begin{equation*}
S(\sA)_B = \bigotimes_{\substack{ n = 0 \\ f^{-n}(B) \cap B \neq \emptyset}}^\infty \sA_{f^{-n}(B)},
\end{equation*}
which is actually a finite tensor product as $f^n(X) \cap B = \emptyset$ for all but finitely many $n$.
This defines a net on $X$.
If $\sA$ is Azumaya or local matrix, then $S(\sA)$ has these properties as well.

\begin{comment}
To see that $S(\sA)$ is coarsely local if $\sA$ is, let $E$ be a locality preserving entourage for the net $\sA$ and let $Y \subseteq X$ be arbitrary.
Then for every $n \in \N$, there exists a tensor decomposition
\[
\sA = \sB^{(n)} \otimes {\sB'}^{(n)}
\]
with $\sB^{(n)}$ supported on $f^{-n}(Y)_E$ and ${\sB'}^{(n)}$ supported on $f^{-n}(X\setminus Y)_E$.
Hence $f^n_* \sB^{(n)}$ is supported on $Y_{(f^n \times f^n)(E)}$ and $f^n_* {\sB'}^{(n)}$ is supported on $(X \setminus Y)_{(f^n \times f^n)(E)}$.
In total, we get a tensor decomposition $S(\sA) = \sB \otimes \sB'$ with
\[
\sB := \sB^{(0)} \otimes f_* \sB^{(1)} \otimes f^2_*\sB^{(2)} \otimes \cdots, \quad~ \text{and} \quad~  \sB' := {\sB'}^{(0)} \otimes f_* {\sB'}^{(1)} \otimes f^2_*{\sB'}^{(2)} \otimes \cdots,
\]
supported on $Y_F$, respectively $(X \setminus Y)_F$. 
Here 
\begin{equation*}
F = \bigcup_{n=0}^\infty (f^n\times f^n)(E),
\end{equation*}
which is an entourage by properties of the map $f$.
\end{comment}

If $\varphi : \sA \to \sB$ is an $E$-controlled morphism, then we obtain a $*$-homomorphism $S(\sA)_X \to S(\sB)_X$ by applying $\varphi$ to each tensor factor.
This homomorphism is controlled by
\begin{equation*}
\bigcup_{n=0}^\infty (f^n\times f^n)(E),
\end{equation*}
which is an entourage by the properties of $f$, hence it  defines a homomorphism of nets.

Finally, since $f$ is close to the identity, we have a natural isomorphism $S(\sA) \cong f_*S(\sA)$ by Prop.~\ref{PropCloseness}, which in turn is clearly naturally isomorphic to $\sA \otimes S(\sA)$, in fact by a local homomorphism.
\end{proof}

\subsection{Tensor factors and homomorphisms}

Let $\sB$ be a net on $X$.
A \emph{subnet} of $\sB$ is a net $\sA$ on $X$ such that $\sA_B \subseteq \sB_B$ for each $B \subseteq X$ bounded.
We write $\sA \subseteq \sB$ if $\sA$ is a subnet of $\sB$.

\begin{definition}
Let $\sB$ be a net and let $\sA \subseteq \sB$ be a subnet.
The \emph{commutant} of $\sA$ in $\sB$ is the net $\sA'$ given by
\[
\sA'_B = (\sA_X)' \cap \sB_B,
\]
where $(\sA_X)' = \{b \in \sB_X \mid \forall a \in \sA_X : ab = ba\}$ is the commutant of $\sA_X$ in $\sB_X$.
$\sA$ is a \emph{tensor factor} of $\sB$ if the canonical homomorphism $\sA \otimes \sA' \to \sB$ is an isomorphism of nets. 
\end{definition}

\begin{comment}
\begin{example}
%\label{ExampleLocalEmbeddings}
If $\sA$ and $\sB$ are matrix nets and $\varphi : \sA \hookrightarrow \sB$ is a \emph{local} homomorphism of nets, then $\sA$ is a tensor factor of $\sB$, with the commutant $\sA'$ again being a matrix net. 
This follows from the general fact that if $B$ is a matrix algebra and $A \subseteq B$ is subalgebra containing the unit that is also isomorphic to a matrix algebra, then we have $B = A \otimes A'$, with the commutant $A'$ also being a matrix algebra.
\end{example}
\end{comment}

Notice that $\sA'_X = (\sA_X)'$ and that multiplication induces a canonical algebra homomorphism $\mu : \sA_X \otimes (\sA_X)' \to \sB_X$ which restricts to $*$-homomorphisms
\begin{equation}
\label{LocalStarHom}
(\sA \otimes \sA')_B =\sA_B \otimes (\sA_X)' \cap \sB_B \longrightarrow \sB_B
\end{equation}
for every $B \subseteq X$.
However, it is generally not clear that these maps are isomorphisms, even if $\mu$ is an isomorphism globally.
We have the following criterion to have an isomorphism of nets, compare \cite[Lemma 63]{QCAspace}.

\begin{lemma}
\label{LemmaCharacterizationTensorFactor}
A subnet $\sA \subseteq \sB$ of a net $\sB$ is a tensor factor if and only if the multiplication map  $\mu : \sA_X \otimes (\sA_X)' \to \sB_X$ is injective and there exists an entourage $E$ such that
\begin{equation}
\label{ControlledInclusion}
\sB_B \subseteq \sA_{B_E} \cdot  \sA'_{B_E}.
\end{equation}
for all bounded sets $B \subseteq X$.
\end{lemma}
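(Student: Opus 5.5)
Both directions amount to unwinding what it means for the canonical homomorphism $\mu : \sA \otimes \sA' \to \sB$ to be an isomorphism of nets. By definition, this requires three things. First, $\mu$ must be a controlled homomorphism. Second, it must be a $*$-isomorphism $\sA_X \otimes (\sA_X)' \to \sB_X$ on global algebras (recall $\sA'_X = (\sA_X)'$, and $(\sA\otimes\sA')_X = \sA_X \otimes \sA'_X$, since the colimit over bounded sets commutes with the algebraic tensor product). Third, its inverse must be controlled.

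The first point is automatic. By \eqref{LocalStarHom}, $\mu$ maps $(\sA\otimes\sA')_B = \sA_B \otimes \sA'_B$ into $\sB_B$, because $\sA_B \subseteq \sB_B$ and $\sA'_B \subseteq \sB_B$. So $\mu$ is local, that is, $\Delta$-controlled. Note also that $\mu$ is a $*$-homomorphism precisely because $\sA_X$ and $(\sA_X)'$ commute inside $\sB_X$.

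For the direction ``tensor factor $\Rightarrow$ criterion'': $\mu$ is an isomorphism on global algebras, so it is injective. Its inverse is $E$-controlled for some entourage $E$, which means
\[
\mu^{-1}(\sB_B) \subseteq \sA_{B_E} \otimes \sA'_{B_E}.
\]
Applying $\mu$ to both sides gives $\sB_B \subseteq \sA_{B_E}\cdot \sA'_{B_E}$. Here I read $\sA_{B_E}\cdot\sA'_{B_E}$ as the image under $\mu$, i.e.\ the linear span of products, which is the reading I will use throughout.

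For the converse, assume $\mu$ is injective and \eqref{ControlledInclusion} holds. I first get surjectivity: $\sB_X$ is the union of the $\sB_B$, and each $\sB_B$ lies in the image of $\mu$ by \eqref{ControlledInclusion}. So $\mu$ is a bijective $*$-homomorphism, and its inverse is again a $*$-homomorphism. For controlledness of the inverse, take $b \in \sB_B$. By \eqref{ControlledInclusion}, $b = \mu(t)$ for some $t \in \sA_{B_E}\otimes \sA'_{B_E}$, and injectivity of $\mu$ forces $\mu^{-1}(b) = t$. Hence $\mu^{-1}(\sB_B) \subseteq (\sA\otimes\sA')_{B_E}$, so the inverse is $E$-controlled and $\mu$ is an isomorphism of nets.

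The main obstacle is a bookkeeping point rather than a real difficulty: identifying $(\sA\otimes\sA')_{B_E}$ with $\sA_{B_E}\otimes\sA'_{B_E}$ when $B_E$ is merely bounded, as opposed to an arbitrary subset. This is exactly where the compatibility of bornology and coarse structure is used, since it guarantees that $B_E$ is bounded. One must also make sure that $\sA_{B_E}\otimes \sA'_{B_E}$ sits inside $\sA_X\otimes(\sA_X)'$ as a subspace, so that injectivity of $\mu$ on the global algebra can be applied to it. This holds because tensor products of inclusions of vector spaces are injective.
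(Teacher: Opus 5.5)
Your proof is correct and follows the same route as the paper. In the forward direction you apply $\mu$ to the $E$-control of $\mu^{-1}$; in the converse you get surjectivity from $\sB_X = \bigcup_B \sB_B$, and then read off the $E$-control of $\mu^{-1}$ from $\sB_B \subseteq \sA_{B_E}\cdot\sA'_{B_E}$ together with injectivity of $\mu$, while your extra bookkeeping (locality of $\mu$, the identification $(\sA\otimes\sA')_{B_E} = \sA_{B_E}\otimes\sA'_{B_E}$, and injectivity of tensor products of inclusions) is accurate and is left implicit in the paper.
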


\begin{proof}
($\Longrightarrow$) Suppose that $\sA$ is a tensor factor of $\sB$.
Then multiplication yields a controlled $*$-homomorphism $\mu : (\sA \otimes \sA')_X \cong \sA_X \otimes (\sA_X)' \to \sB_X$ whose inverse is controlled as well. 
If $E$ is a control for the inverse, then we have  
\[
\sB_B \subseteq \mu\bigl((\sA \otimes \sA')_{B_E}\bigr) = \sA_{B_E} \cdot \sA_{B_E}'.
\]
($\Longleftarrow$) 
Suppose that $\mu$ is injective and that \eqref{ControlledInclusion} holds.
By \eqref{ControlledInclusion}, we have
\[
\sB_X = \bigcup_{B \in \cB} \sB_B \subseteq \bigcup_{B \in \cB} \sA_{B_E} \cdot \sA_{B_E}' = \sA_X \otimes (\sA_X)'.
\]
This shows that $\mu$ is also surjective, hence an isomorphism.
$\mu$ is always controlled (in fact, local) and \eqref{ControlledInclusion} states precisely that the inverse is $E$-controlled.
\end{proof}

\begin{definition}
If $\sA, \sB$ are nets and $\varphi : \sA \to \sB$ is a homomorphism, then the \emph{image net} $\varphi_*\sA$ is defined by
\[
(\varphi_*\sA)_B = \varphi(\sA_X) \cap \sB_B.
\]
\end{definition}

\begin{remark}
\label{RemarkAisomorphicToImage}
The functor $\varphi_*\sA$ is again a net because the intersection of two $C^*$-subalgebras is again a $C^*$-subalgebra.
If $\varphi$ is $E$-controlled, then $\varphi(\sA_B) \subseteq \sB_{B_E}$, hence also $\varphi(\sA_B) \subseteq (\varphi_*\sA)_{B_E}$ and $(\varphi_*\sA)_B \subseteq \varphi(\sA_{B_E})$.
We therefore obtain an $E$-controlled isomorphism of nets $\sA \cong \varphi_*\sA$.
\end{remark}

The following structure results on tensor factors will be used in \S\ref{SectionMayerVietoris} to prove the Mayer--Vietoris decomposition result.

\begin{proposition}
[Images of tensor factors]
\label{PropImageOfTensorFactor}
Let $\sB$ and $\tilde{\sB}$ be nets and let $\varphi: \sB \to \tilde{\sB}$ be an isomorphism of nets.
If $\sA$ is a tensor factor of $\sB$ with commutant $\sA'$, then $\varphi_*\sA$ is a tensor factor of $\tilde{\sB}$ with commutant $\varphi_*\sA'$.
\end{proposition}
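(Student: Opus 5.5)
Let $\tilde\sA := \varphi_*\sA$, so $\tilde\sA_B = \varphi(\sA_X)\cap\tilde\sB_B$. The plan is to verify the two conditions of Lemma~\ref{LemmaCharacterizationTensorFactor} for the subnet $\tilde\sA \subseteq \tilde\sB$. Along the way we identify its commutant with $\varphi_*\sA'$. Throughout, $\varphi$ is an $E$-controlled isomorphism whose inverse is $F$-controlled, and $G$ denotes an entourage for which $\sB_B \subseteq \sA_{B_G}\cdot\sA'_{B_G}$, as provided by the lemma applied to $\sA\subseteq\sB$.

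\textbf{Step 1: identify the commutant.} First, $\tilde\sA_X = \varphi(\sA_X)$, because $\tilde\sB_X$ is the union of the $\tilde\sB_B$. Since $\varphi : \sB_X \to \tilde\sB_X$ is a $*$-isomorphism, commutants correspond: $(\varphi(\sA_X))' = \varphi((\sA_X)') = \varphi(\sA'_X)$. Hence the commutant of $\tilde\sA$ in $\tilde\sB$ is the net
\[
B \longmapsto \varphi(\sA'_X)\cap\tilde\sB_B = (\varphi_*\sA')_B .
\]
This is exactly $\varphi_*\sA'$; here $\varphi_*\sA'$ is regarded through the homomorphism $\sA'\hookrightarrow\sB\xrightarrow{\varphi}\tilde\sB$, whose image is $\varphi(\sA'_X)$.

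\textbf{Step 2: injectivity.} The multiplication map $\tilde\mu : \tilde\sA_X\otimes\tilde\sA'_X \to \tilde\sB_X$ equals $\varphi\circ\mu\circ(\varphi^{-1}\otimes\varphi^{-1})$. The map $\mu$ is injective, in fact bijective, because $\sA$ is a tensor factor (the ``only if'' direction of Lemma~\ref{LemmaCharacterizationTensorFactor}). Therefore $\tilde\mu$ is injective as well.

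\textbf{Step 3: controlled inclusion.} This is the only step requiring care with entourages, and it mirrors Remark~\ref{RemarkAisomorphicToImage}. For bounded $B$, control of $\varphi^{-1}$ gives $\tilde\sB_B \subseteq \varphi(\sB_{B_F})$. The choice of $G$ then gives
\[
\tilde\sB_B \subseteq \varphi\bigl(\sA_{B_{G\circ F}}\bigr)\cdot\varphi\bigl(\sA'_{B_{G\circ F}}\bigr).
\]
Next, $\varphi(\sA_{C}) \subseteq \varphi(\sA_X)\cap\tilde\sB_{C_E} = \tilde\sA_{C_E}$, and likewise $\varphi(\sA'_C)\subseteq(\varphi_*\sA')_{C_E}$. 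Putting these together,
\[
\tilde\sB_B \subseteq \tilde\sA_{B_H}\cdot\tilde\sA'_{B_H}, \qquad H = E\circ G\circ F ,
\]
which is an entourage by the closure properties of the coarse structure. Lemma~\ref{LemmaCharacterizationTensorFactor} now shows that $\tilde\sA$ is a tensor factor of $\tilde\sB$ with commutant $\varphi_*\sA'$.

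The only subtle points are the order of composition of the entourages and the check that fattening is monotone, $(B_F)_G = B_{G\circ F}$. There is no real obstacle.
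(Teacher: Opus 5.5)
Your proof is correct and takes the same route as the paper's. You identify the commutant via $\varphi((\sA_X)') = \varphi(\sA_X)'$, transport bijectivity of the multiplication map through $\varphi$, and run the entourage chase $\tilde{\sB}_B \subseteq \varphi(\sB_{B_F}) \subseteq \cdots$ to verify the criterion of Lemma~\ref{LemmaCharacterizationTensorFactor}; the paper differs only in using one entourage that controls both $\varphi$ and $\varphi^{-1}$. One harmless slip: with the paper's convention for $\circ$ one has $(B_F)_G = B_{F\circ G}$, so your composite should read $F\circ G\circ E$ rather than $E\circ G\circ F$, which does not affect the argument since any composite of entourages is an entourage.
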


\begin{proof}
Because $\varphi$ is an isomorphism, we have $\varphi((\sA_X)') = \varphi(\sA_X)'$.
Hence for bounded subsets $B \subseteq X$, we get
\[
(\varphi_*\sA')_B = \varphi((\sA_X)') \cap \tilde{\sB}_B = \varphi(\sA_X)' \cap \tilde{\sB}_B = \bigl((\varphi_*\sA)_X\bigr)' \cap \tilde{\sB}_B = (\varphi_*\sA)'_B,
\]
so $\varphi_*\sA'$ is indeed the commutant of $\varphi_*\sA$.
We may therefore verify the criterion of Lemma~\ref{LemmaCharacterizationTensorFactor}.

Since $\sA$ is a tensor factor of $\sB$, the multiplication map $\sA_X \otimes (\sA_X)' \to \sB_X$ is an isomorphism.
Since $\varphi$ is an isomorphism, we obtain that also the multiplication map
\[
(\varphi_*\sA)_X \otimes (\varphi_*\sA)'_X = \varphi(\sA_X) \otimes \varphi(\sA'_X)\longrightarrow \varphi(\sB_X) = \tilde{\sB}_X
\]
is an isomorphism.
Moreover, by  Lemma~\ref{LemmaCharacterizationTensorFactor}, there exists an entourage $E$ such that
\[
\sB_B \subseteq \sA_{B_E} \cdot \sA'_{B_E}
\] 
for all $B \subseteq X$ bounded.
If $F$ is a control for both $\varphi$ and $\varphi^{-1}$, then 
%$\varphi(\sA_B) \subseteq \tilde{\sB}_{B_F}$ and $\varphi^{-1}(\tilde{\sB}_B) \subseteq \sB_{B_F}$, hence
\[
\varphi(\sA_B) \subseteq \varphi(\sA_X) \cap \tilde{\sB}_{B_F} = (\varphi_*\sA)_{B_F}
\qquad
\text{and}
\qquad
\tilde{\sB}_B \subseteq \varphi(\sB_{B_F}).
\]
We therefore get
\begin{align*}
\tilde{\sB}_B \subseteq \varphi(\sB_{B_F}) &\subseteq \varphi(\sA_{B_{F \circ E}} \cdot \sA'_{B_{F \circ E}})
\\
&= \varphi(\sA_{B_{F \circ E}} )\cdot \varphi(\sA'_{B_{F \circ E}})
\\
&\subseteq (\varphi_* \sA)_{B_{F \circ E \circ F}} \cdot (\varphi_* \sA')_{B_{F \circ E\circ F}}.
\end{align*}
\end{proof}

\begin{corollary}
\label{CorollaryAzumayaSubnet}
	Any Azumaya net on $X$ is isomorphic to a tensor factor of a local matrix net.
\end{corollary}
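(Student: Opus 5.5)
Let $\sA$ be an Azumaya net on $X$. By definition there are a net $\sA''$ on $X$, a local matrix net $\sB$ on $X$, and an isomorphism of nets $\psi : \sA \otimes \sA'' \to \sB$. The plan is to show that $\sA \otimes 1$ is a tensor factor of $\sA \otimes \sA''$ with commutant $1 \otimes \sA''$, and then to transport this along $\psi$ using Proposition~\ref{PropImageOfTensorFactor}. The image $\psi_*(\sA\otimes 1)$ will then be a tensor factor of the local matrix net $\sB$. By Remark~\ref{RemarkAisomorphicToImage} it is isomorphic to $\sA \otimes 1$, which is in turn locally isomorphic to $\sA$ via $a \mapsto a \otimes 1$.

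The first step is to identify the subnet $\sA\otimes 1 \subseteq \sA \otimes \sA''$, with $(\sA \otimes 1)_B = \sA_B \otimes 1$, and compute its commutant. Globally we have $(\sA\otimes\sA'')_X = \sA_X \otimes \sA''_X$, since both sides are the union of $\sA_B \otimes \sA''_B$ over the directed set $\cB$. The commutant of $\sA_X \otimes 1$ in $\sA_X \otimes \sA''_X$ is $Z(\sA_X) \otimes \sA''_X$. This is the standard fact about commutants in tensor products: apply it to the finite-dimensional algebras $\sA_B\otimes\sA''_B$ and pass to the union.

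The main obstacle is that $\sA_X$ might a priori have nontrivial center, in which case the commutant would not be $1\otimes \sA''$. So I must show that $\sA_X$ is central. Since $\psi$ is a global $*$-isomorphism, $\sA_X \otimes \sA''_X \cong \sB_X$. The algebra $\sB_X$ is a directed union of full matrix algebras, hence has trivial center. The center of $\sA_X\otimes\sA''_X$ contains $Z(\sA_X)\otimes 1$, so $Z(\sA_X)=\C$. Thus the commutant net is $(\sA\otimes 1)'_B = (1\otimes \sA''_X)\cap(\sA_B\otimes\sA''_B) = 1 \otimes \sA''_B$. Slicing with a linear functional on $\sA_B$ shows this last intersection is $1\otimes\sA''_B$.

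It then remains to check the criterion of Lemma~\ref{LemmaCharacterizationTensorFactor} with $E=\Delta$. The multiplication map $(\sA_X\otimes 1)\otimes(1\otimes\sA''_X)\to \sA_X\otimes\sA''_X$ is the identity, hence injective. The inclusion $\sA_B\otimes \sA''_B \subseteq (\sA_B\otimes 1)\cdot(1\otimes \sA''_B)$ holds trivially. So $\sA\otimes 1$ is a tensor factor, and Proposition~\ref{PropImageOfTensorFactor} finishes the proof.
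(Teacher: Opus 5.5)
Your proposal is correct and follows the paper's own proof: transport $\sA \otimes 1$ along $\psi$ using Proposition~\ref{PropImageOfTensorFactor}, then identify the image with $\sA$ via Remark~\ref{RemarkAisomorphicToImage}. The paper leaves implicit that $\sA\otimes 1$ is a tensor factor of $\sA\otimes\sA''$ with commutant $1\otimes\sA''$, and your verification of this correctly supplies the missing step, including the point that $Z(\sA_X)=\C$ is needed and follows from $\sB_X$ being a directed union of full matrix algebras.
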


\begin{proof}
If $\sA$ is an Azumaya net on $X$ and $\sA'$ is another net on $X$ such that there exists an isomorphism $\varphi : \sA \otimes \sA' \to \sB$ with a local matrix net $\sB$, then by Prop.~\ref{PropImageOfTensorFactor}, $\varphi_*\sA$ is a tensor factor of $\sB$ and by Remark~\ref{RemarkAisomorphicToImage}, we have $\varphi_*\sA \cong \sA$.
\end{proof}

\begin{comment}
\begin{corollary}
\label{CorollaryImageCoarselyLocal}
Let $\sA$ and $\sB$ be nets on $X$ and let $\varphi : \sA \to \sB$ be a homomorphism.
Then if $\sA$ is coarsely local, so is $\varphi_*\sA$.
\end{corollary}

\begin{proof}
Let $E$ be a locality preserving entourage for $\sA$ and let $Y \subseteq X$, so that $\sA = \sC \otimes \sC'$ with $\sC$ supported on $Y_E$ and $\sC'$ supported on $(X\setminus Y)_E$.
Then by Prop.~\ref{PropImageOfTensorFactor}, 
\[
\varphi_*\sA \cong \varphi_*\sC \otimes \varphi_*\sC'.
\]
If now $\varphi$ is $F$-controlled for an entourage $F$, then $\varphi_*\sC$ is supported in $(Y_E)_F = Y_{E \circ F}$ and $\varphi_*\sC'$ is supported in $(X\setminus Y)_{E \circ F}$. 
This shows that $E \circ F$ is a locality controlling entourage for $\varphi_*\sA$, hence $\varphi_*\sA$ is coarsely local.
\end{proof}
\end{comment}

\begin{proposition}[Nested tensor factors]
\label{PropNestedSequence}
Let $\sA \subseteq \sB \subseteq \sC$ be nets on $X$.
Assume that both $\sA$ and $\sB$ are tensor factors of $\sC$. 
Then $\sA$ is a tensor factor of $\sB$. 
More precisely, if $\sA'$ denotes the commutant of $\sA$ in $\sC$, then the commutant of $\sA$ in $\sB$ is the net $\sA' \cap \sB$, given by
\[
(\sA' \cap \sB)_B = \sA'_B \cap \sB_B.
\]
\end{proposition}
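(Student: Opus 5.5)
The plan is to verify the criterion of Lemma~\ref{LemmaCharacterizationTensorFactor} for the subnet $\sA \subseteq \sB$. First I identify the commutant of $\sA$ inside $\sB$: by definition its value on $B$ is $\{b \in \sB_X \mid ab = ba \ \forall a \in \sA_X\} \cap \sB_B$. Since $\sB_B \subseteq \sB_X \subseteq \sC_X$, this equals $(\sA_X)'_{\sC} \cap \sB_B = \sA'_B \cap \sB_B$, where $\sA'$ is the commutant in $\sC$. So the commutant is $\sA' \cap \sB$ as claimed, and its global algebra is $\sA'_X \cap \sB_X$.

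Injectivity of the multiplication map $\sA_X \otimes (\sA'_X \cap \sB_X) \to \sB_X$ is immediate. It is the restriction of the multiplication map $\sA_X \otimes \sA'_X \to \sC_X$, which is injective (indeed an isomorphism) because $\sA$ is a tensor factor of $\sC$. A tensor product of subspaces injects into the tensor product of the ambient spaces.

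The main step is the controlled inclusion $\sB_B \subseteq \sA_{B_F} \cdot (\sA' \cap \sB)_{B_F}$ for some entourage $F$. Because $\sA$ is a tensor factor of $\sC$, Lemma~\ref{LemmaCharacterizationTensorFactor} gives an entourage $E$ with $\sC_B \subseteq \sA_{B_E}\cdot\sA'_{B_E}$. Take $b \in \sB_B \subseteq \sC_B$. Writing $b = \sum_i a_i a'_i$, I may choose the $a_i$ to be a basis of the finite-dimensional algebra $\sA_{B_E}$, using a fixed basis of matrix-unit type. The task is to show that the coefficients $a'_i \in \sA'_{B_E}$ actually lie in $\sB$. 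For this I use that $\sA$ is a tensor factor of $\sB$ at the global level. Since $\sA_X \subseteq \sB_X$ and $\sA_X \otimes \sA'_X \cong \sC_X$, every element of $\sB_X$ decomposes uniquely as $\sum a_i c_i$ with $c_i \in \sA'_X$, and the coefficient maps are slice maps. The goal is that these slice maps send $\sB_X$ into $\sB_X$.

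I expect this to be the main obstacle. Since $\sB$ is a tensor factor of $\sC$, we have $\sC_X = \sB_X \otimes \sB'_X$ with $\sA_X \subseteq \sB_X$, so $\sA'_X = (\sA'_X \cap \sB_X) \otimes \sB'_X$ inside $\sC_X$. I would first try to justify this via the finite-dimensional decomposition $\sB_X = \sA_X \otimes (\sA_X' \cap \sB_X)$, valid on the union of finite stages. Alternatively, I would argue concretely: realize each coefficient as $a'_i = \sum_j e_{j,i}\, b\, e_{i,j}$-type expressions using matrix units of the simple pieces of the finite-dimensional algebra $\sA_{B_E}$. These expressions involve only elements of $\sA_{B_E} \subseteq \sB_{B_E}$ and $b$, hence lie in $\sB_{B_{E}}$ (after enlarging by $E$ and using $B \subseteq B_E$). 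They also commute with $\sA_X$ by uniqueness of the decomposition in $\sA_X \otimes \sA'_X$. Thus $a'_i \in \sA'_{B_E} \cap \sB_{B_E}$, and the criterion holds with $F = E$.

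If $\sA_{B_E}$ is not a factor, its center complicates the matrix-unit formula. In that case I instead apply the slice map coming from a faithful trace on $\sA_{B_E}$, or reduce to the global tensor decomposition above, which only needs the coefficients to commute with $\sA_X$ and lie in $\sB_X \cap \sC_{B_E}$. The latter set should equal $\sB_{B_{E'}}$ for a controlled $E'$, because the inverse of $\sB\otimes\sB' \to \sC$ is controlled.
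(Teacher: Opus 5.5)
Your identification of the commutant of $\sA$ in $\sB$ as $\sA'\cap\sB$ is correct, and so is your injectivity argument. Your coefficient extraction also works when $\sA_{B_E}$ is a full matrix algebra, unital in $\sC_X$: the coefficient of a matrix unit $e_{kl}$ is $\sum_j e_{jk}\,b\,e_{lj}$, which lies in $(\sA_X)'\cap\sB_{B_E}$.

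The gap is the general case. The proposition concerns arbitrary nets, and the finite stages of a tensor factor need not be factors; only the global algebra $\sA_X$ is forced to have trivial center. If $\sA_{B_E}$ has minimal central projections $p_1,\dots,p_m$ with $m\geq 2$, conjugating $b$ by matrix units only recovers the compressions $p_r a'_i$, not $a'_i$ itself. The trace slice map has the same defect: it is defined on $\sA_{B_E}\otimes\sA'_X$, but nothing shows it maps $\sB$ into $\sB$.

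Your factor-case argument also never uses that $\sB$ is a tensor factor of $\sC$, and without that hypothesis the conclusion fails once the stages are not factors. For example, let $F$ be the finitely supported matrices on $\ell^2(\N)$. Then $\sA_X=\C1+F$ is a tensor factor of $\sC_X=\sA_X\otimes\C^2$; this is realized by nets on $\N$ with stages $\C1+M_n$. Yet the intermediate algebra $\sB_X=\C1+F\otimes\C^2$ has $(\sA_X)'\cap\sB_X=\C$, so $\sA$ is not a tensor factor of $\sB$.

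So the hypothesis on $\sB$ must be used substantively. In your proposal it enters only through the identity $(\sA_X)'=\bigl((\sA_X)'\cap\sB_X\bigr)\otimes\sB'_X$, which you assert rather than prove. That identity is the heart of the matter, and your suggested justification is circular. The decomposition $\sB_X=\sA_X\otimes\bigl((\sA_X)'\cap\sB_X\bigr)$ is the global form of the statement you are proving. At finite stages it is neither available, since the $\sA_B$ need not be factors, nor relevant, since relative commutants of $\sA_B$ in $\sB_B$ are not commutants of $\sA_X$.

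The missing idea, which is exactly the paper's argument, is to expand in a basis of the \emph{other} factor. By Lemma~\ref{LemmaCharacterizationTensorFactor} there is an entourage $E$ with $\sC_B\subseteq\sB_{B_E}\cdot\sB'_{B_E}$, and the map $\sB_X\otimes\sB'_X\to\sC_X$ is injective. Hence any $c\in\sA'_B$ can be written uniquely as $c=\sum_i b_i x_i$, with $(x_i)$ a basis of $\sB'_{B_E}$ and $b_i\in\sB_{B_E}$. Since $\sB'_X$ commutes with $\sA_X\subseteq\sB_X$, for $a\in\sA_X$ you get $0=ac-ca=\sum_i(ab_i-b_ia)x_i$. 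Uniqueness then forces $b_i\in(\sA_X)'\cap\sB_{B_E}$. This gives the controlled inclusion $\sA'_B\subseteq(\sA'\cap\sB)_{B_E}\cdot\sB'_{B_E}$, with no factor assumption.

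With this in hand your own route closes. Combine it with $\sC_B\subseteq\sA_{B_{E_1}}\cdot\sA'_{B_{E_1}}$, and expand $b\in\sB_B$ against a basis of the relevant part of $\sB'_X$ that contains $1$. Since $b\in\sB_X$, uniqueness in $\sB_X\otimes\sB'_X$ kills every component except the one along $1$. This gives $\sB_B\subseteq\sA_{B_F}\cdot(\sA'\cap\sB)_{B_F}$ for a suitable entourage $F$. The same trick proves your last claim, but only as the inclusion $\sB_X\cap\sC_{B_E}\subseteq\sB_{B_{E'}}$, not as an equality; the inclusion is all you need.
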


\begin{proof}
Since $\sA$ is a tensor factor of $\sC$, the multiplication map $\sA_X \otimes (\sA_X)' \to \sC_X$ is an isomorphism, and it remains injective when the second factor is restricted to the commutant $(\sA_X)' \cap \sB_X$ of $\sA_X$ in $\sB_X$.
%After tensoring with a tensor complement of $\sC$, we may assume that $\sC$ is a local matrix net.

Since $\sB$ is a tensor factor of $\sC$, Lemma~\ref{LemmaCharacterizationTensorFactor} yields an entourage $E$ such that
\begin{equation}
\label{eq:C-in-A}
\sC_B \subseteq \sB_{B_{E}} \cdot \sB'_{B_{E}}
\end{equation}
for all bounded $B \subseteq X$.
We therefore have the inclusion
\begin{equation}
\label{FurtherInclusion}
\sA'_B = (\sA_X)' \cap \sC_B
\subseteq (\sA_X)' \cap (\sB_{B_E} \cdot \sB'_{B_E}) = \bigl((\sA_X)' \cap \sB_{B_E}\bigr) \cdot \sB'_{B_E}
\end{equation}
To see the last equality, first notice that since $\sB$ is a tensor factor of $\sC$, the multiplication maps $\sB_X \otimes (\sB_X)' \to \sC_X$ is injective.
So after choosing a vector space basis $x_1, \dots, x_n$ for $\sB'_{B_E}$, any element $c \in \sB_{B_E} \cdot \sB'_{B_E} \cong \sB_{B_E} \otimes \sB'_{B_E}$ can be uniquely written as 
\[
c = \sum_{i=1}^n b_ix_i, \qquad b_1, \dots, b_n \in \sB_{B_E}.
\]
Suppose that $c$ is also contained in $(\sA_X)'$, hence commutes with each $a \in \sA_X$. 
Since $\sA_X \subseteq \sB_X$ and $\sB'_{B_E} \subseteq (\sB_X)' \subseteq (\sA_X)'$, every $x_i$ commutes with $a$.
Hence
\[
0 = ac - ca = \sum_{i=1}^n(ab_i - b_i a) x_i.
\]
By uniqueness of the representations, we get $ab_i - b_i a = 0$. Since $a$ was arbitrary, we get that $b_i \in (\sA_X)' \cap \sB_{B_E}$, as claimed.

\begin{comment}
Here is a more general statement.
Let $R$ be a commutative $R$-algebra and let $X$ be an $R$-algebra which splits as a tensor factor 
\end{comment}

By Lemma~\ref{LemmaCharacterizationTensorFactor}, the inclusion \eqref{FurtherInclusion} shows that 
\[
\sA' \cong (\sA' \cap \sB) \otimes \sB', \qquad \text{hence} \qquad \sC \cong \sA \otimes \sA' \cong \sA \otimes (\sA' \cap \sB) \otimes \sB'.
\]
In particular, we get $\sB \cong \sA \otimes (\sA' \cap \sB)$.
\end{proof}

\subsection{QCA and coarsely local automorphisms}
\label{SectionCoarselyLocal}

Let $X$ be a bornological coarse space.
A \emph{quantum cellular automaton}, or \emph{QCA} for short, is just another name for a (controlled) automorphism of a local matrix net on $X$.
QCA are usually considered modulo a special class of automorphisms, called \emph{finite depth quantum circuits}.
However, this notion is not coarsely invariant; instead, we  introduce the new notion of a \emph{coarsely local} automorphism.
In \S\ref{SectionCircuit} below, we show that the two notions agree on spaces of bounded geometry.

We need the following notions:
A collection $(B_i)_{i \in I}$ of subsets of $X$ is called \emph{uniformly bounded} if $\bigcup_{i \in I} B_i \times B_i$ is an entourage.
In particular, each $B_i$ must be bounded.
It is \emph{locally finite} if each $B \subseteq X$ bounded has non-trivial intersection with at most finitely many $B_i$.

\begin{definition}[coarsely local automorphisms]
Let $\sA$ be a local matrix net on $X$.
An automorphism $\alpha$ of $\sA$ is \emph{coarsely local} if
there exists a uniformly bounded and locally finite collection $(B_i)_{i \in I}$ of subsets of $X$, together with a decomposition
\[
\sA \cong \bigotimes_{i \in I} \sA^{(i)}
\]
into pairwise commuting tensor factors $\sA^{(i)}$ of $\sA$ supported on $B_i$, such that $\alpha$ restricts to an automorphism of $\sA^{(i)}$ for each $i \in I$.
We denote by $\LAut(\sA)\subseteq\Aut(\sA)$ the subgroup consisting of finite products of coarsely local automorphisms.
\end{definition}

The infinite tensor product in the above definition is to be understood as
\[
\sA_B \cong \bigotimes_{\substack{i \in I \\ B_i \cap B \neq \emptyset}} \sA^{(i)}_B,
\]
which is a finite tensor product because the family $(B_i)_{i \in I}$ is locally finite.
We require that the isomorphism above is given by multiplication, which makes sense because the subalgebras $\sA^{(i)}_B \subseteq \sA_B$ all pairwise commute.
Note that $\alpha$ is controlled by $\bigcup_{i \in I} B_i \times B_i$.
Any local automorphism is coarsely local; here one may take the tensor decomposition $\sA \cong \bigotimes_{x \in X} \sA|_x$, which is subordinate to the decomposition of $X$ into singleton subsets.

\begin{lemma}
\label{LemmaLAutNormal}
	$\LAut(\sA)$ is a normal subgroup of $\Aut(\sA)$.
\end{lemma}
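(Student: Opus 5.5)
Since $\LAut(\sA)$ is by definition the subgroup generated by the coarsely local automorphisms (finite products of them, and the inverse of a coarsely local automorphism is again coarsely local with respect to the same decomposition), normality reduces to the following. For every $\beta \in \Aut(\sA)$ and every coarsely local $\alpha$, the conjugate $\beta \alpha \beta^{-1}$ must again be coarsely local. Conjugation is a group automorphism, so it then maps finite products of coarsely local automorphisms to finite products of coarsely local automorphisms.

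Fix $\alpha$ coarsely local with respect to a uniformly bounded, locally finite family $(B_i)_{i\in I}$ and a decomposition $\sA \cong \bigotimes_i \sA^{(i)}$, with $\alpha(\sA^{(i)}) = \sA^{(i)}$. Let $\beta$ be $F$-controlled, with $\beta^{-1}$ also $F$-controlled. I would transport the decomposition along $\beta$ by setting $\tilde\sA^{(i)} := \beta_*\sA^{(i)}$, the image net. Three facts then follow from earlier results.

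\begin{enumerate}[(1)]
\item By Prop.~\ref{PropImageOfTensorFactor}, each $\tilde\sA^{(i)}$ is a tensor factor of $\sA$.
\item These factors pairwise commute, since $\beta$ is an algebra automorphism.
\item As in Remark~\ref{RemarkAisomorphicToImage}, $\tilde\sA^{(i)}_X = \beta(\sA^{(i)}_X) \subseteq \sA_{(B_i)_F}$, so $\tilde\sA^{(i)}$ is supported on $\tilde B_i := (B_i)_F$.
\end{enumerate}

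Moreover $\beta\alpha\beta^{-1}$ preserves each $\tilde\sA^{(i)}_X = \beta(\sA^{(i)}_X)$.

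Next I need to check that $(\tilde B_i)$ is again uniformly bounded and locally finite. Uniform boundedness holds because
\[
\bigcup_i (B_i)_F\times(B_i)_F \subseteq F^{-1}\circ\Bigl(\bigcup_i B_i\times B_i\Bigr)\circ F,
\]
with the appropriate orientation of $F$. For local finiteness, if $B$ meets $(B_i)_F$, then $B_{F^{-1}}$ meets $B_i$; since $B_{F^{-1}}$ is bounded by compatibility of bornology and coarse structure, only finitely many $i$ qualify.

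The remaining step, and the main obstacle, is that $\sA \cong \bigotimes_i \tilde\sA^{(i)}$ holds as nets, with the locally finite product understood as in the definition. Globally this is clear: $\beta$ carries the multiplication isomorphism $\bigotimes_i \sA^{(i)}_X \to \sA_X$ to the corresponding map for the $\tilde\sA^{(i)}$. What needs care is control, namely exhibiting an entourage $G$ with
\[
\sA_B \subseteq \prod_{i:\ \tilde B_i\cap B_G\neq\emptyset}\tilde\sA^{(i)}_{B_G}.
\]
I would get this by writing $\sA_B \subseteq \beta(\sA_{B_F})$ and using the control $E$ of the original decomposition, $\sA_{B_F} \subseteq \prod_i \sA^{(i)}_{B_{F\circ E}}$. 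Applying $\beta$ then gives factors $\beta(\sA^{(i)}_{B_{F\circ E}}) \subseteq \tilde\sA^{(i)}_{B_{F\circ E\circ F}}$. This is the same computation as in the proof of Prop.~\ref{PropImageOfTensorFactor}, done with the indexed product in place of a binary one, and it yields $G = F\circ E\circ F$. Hence $\beta\alpha\beta^{-1}$ is coarsely local, which proves normality.
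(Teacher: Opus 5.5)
Your proof is correct and follows the paper's argument. You transport the decomposition along $\beta$ via the image nets $\beta_*\sA^{(i)}$, observe that the supports fatten to $(B_i)_F$, and get uniform boundedness and local finiteness from the fact that $B\cap(B_i)_F\neq\emptyset$ forces $B_{F^{-1}}\cap B_i\neq\emptyset$. Your explicit control estimate $\sA_B\subseteq\prod_i\beta_*\sA^{(i)}_{B_{F\circ E\circ F}}$ for the transported decomposition supplies a step the paper only asserts, as does your reduction from coarsely local generators to $\LAut(\sA)$.
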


\begin{proof}
Let $\alpha$ be a coarsely local automorphism subordinate to the tensor factorization $\sA \cong \bigotimes_{i \in I}\sA^{(i)}$.
Then if $\beta \in \Aut(\sA)$ is arbitrary, $\sA \cong \bigotimes_{i \in I} \beta_*\sA^{(i)}$ is another tensor factori\-zation, and $\beta \circ \alpha \circ \beta^{-1}$ restricts to an automorphism of $\beta_*\sA^{(i)}$ for each $i \in I$.
If $\sA^{(i)}$ is supported on $B_i$ and $\beta$ is $E$-controlled, then $\beta_*\sA^{(i)}$ is supported on $(B_i)_E$.
Clearly, the collection $((B_i)_E)_{i \in I}$ is again uniformly bounded.
Since $B \cap (B_i)_E \neq \emptyset$ if and only if $B_{E^{-1}} \cap B_i \neq \emptyset$ and $(B_i)_{i \in I}$ is locally finite by assumption, $((B_i)_E)_{i \in I}$ is also locally finite.
Hence $\beta \circ \alpha \circ \beta^{-1}$ is again coarsely local.
This implies the lemma.
\end{proof}

By the above lemma, we can make the following definition:

\begin{definition}[unstable QCA group]
For a local net $\sA$ on $X$, the group of \emph{quantum cellular automata on $\sA$} is defined by
	\[
	\QCA(\sA) := \Aut(\sA) / \LAut(\sA).
	\]
\end{definition}

We now proceed to define the stable QCA group, where one is allowed to tensor with additional degrees of freedom.

\begin{lemma}
\label{LemmaLocalPushforward}
If $\varphi : \sA \to \sB$ is a \emph{local} isomorphism $X$, we obtain a group isomorphism
\begin{equation}
\label{Connecting2}
\varphi_* : \QCA(\sA) \longrightarrow \QCA(\sB), \qquad [\alpha] \longmapsto [\varphi \circ \alpha \circ \varphi^{-1}].
\end{equation}
For any two local automorphisms $\varphi, \psi : \sA \to \sB$, we have $\varphi_* = \psi_*$.
\end{lemma}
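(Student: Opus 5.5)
The plan is to check three things in order: conjugation by $\varphi$ gives a group isomorphism $\Aut(\sA)\to\Aut(\sB)$; this isomorphism carries $\LAut(\sA)$ onto $\LAut(\sB)$; and the induced map on quotients does not depend on the choice of local isomorphism.

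First, since $\varphi$ is local, it and $\varphi^{-1}$ are controlled. Hence for $\alpha\in\Aut(\sA)$ the composite $\varphi\circ\alpha\circ\varphi^{-1}$ is a controlled automorphism of $\sB$. If $\alpha$ is $E$-controlled, then so is the conjugate, since $\varphi^{\pm1}$ are $\Delta$-controlled. So $c_\varphi(\alpha)=\varphi\alpha\varphi^{-1}$ is a group isomorphism $\Aut(\sA)\to\Aut(\sB)$ with inverse $c_{\varphi^{-1}}$.

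Second, we show $c_\varphi(\LAut(\sA))\subseteq\LAut(\sB)$; the reverse inclusion follows by symmetry. Since $c_\varphi$ is a homomorphism, it suffices to treat a single coarsely local $\alpha$, subordinate to $\sA\cong\bigotimes_i\sA^{(i)}$ with $\sA^{(i)}$ supported on $B_i$. We repeat the argument of Lemma~\ref{LemmaLAutNormal} with $\varphi$ in place of $\beta$. By Prop.~\ref{PropImageOfTensorFactor}, each image net $\varphi_*\sA^{(i)}$ is a tensor factor of $\sB$. These factors pairwise commute, and multiplication gives $\sB\cong\bigotimes_i\varphi_*\sA^{(i)}$. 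Because $\varphi$ is local, $\varphi_*\sA^{(i)}$ is still supported on $B_i$, so the same uniformly bounded, locally finite family $(B_i)$ works. Finally, $\varphi\alpha\varphi^{-1}$ restricts to an automorphism of each $\varphi_*\sA^{(i)}$. Therefore $c_\varphi$ descends to an isomorphism $\varphi_*:\QCA(\sA)\to\QCA(\sB)$.

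Third, for independence, let $\varphi,\psi:\sA\to\sB$ be local isomorphisms. Then
\[
\psi\alpha\psi^{-1}=(\psi\varphi^{-1})\,(\varphi\alpha\varphi^{-1})\,(\psi\varphi^{-1})^{-1},
\]
and $\gamma:=\psi\varphi^{-1}$ is a local automorphism of $\sB$. As noted after the definition of coarsely local automorphisms, every local automorphism is coarsely local, so $\gamma\in\LAut(\sB)$. Writing $\beta=\varphi\alpha\varphi^{-1}$, we have $\gamma\beta\gamma^{-1}=\gamma\cdot(\beta\gamma^{-1}\beta^{-1})\cdot\beta$. The middle factor lies in $\LAut(\sB)$ by normality (Lemma~\ref{LemmaLAutNormal}). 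Hence $[\gamma\beta\gamma^{-1}]=[\beta]$ in $\QCA(\sB)$, so $\varphi_*=\psi_*$.

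The main obstacle is verifying that a local automorphism of a local matrix net really is coarsely local, that is, that it preserves each site algebra $\sB|_x=\sB_{\{x\}}$. A $\Delta$-controlled map sends $\sB_{\{x\}}$ into $\sB_{\{x\}}$. Since it is injective and these algebras are finite-dimensional, it maps $\sB_{\{x\}}$ onto itself. This is exactly the remark the paper makes after the definition of coarsely local automorphisms, so the step goes through.
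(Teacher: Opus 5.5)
Your proof is correct in outline and follows the paper's structure. Well-definedness comes from the argument of Lemma~\ref{LemmaLAutNormal}. Independence of the choice holds because $\psi_*\alpha$ is the conjugate of $\varphi_*\alpha$ by a coarsely local automorphism of $\sB$; the paper's $\Ad_u=\bigotimes_{x\in Y}\Ad_{u_x}$ is exactly your $\gamma=\psi\varphi^{-1}$.

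The difference is how locality of this automorphism is obtained, and there your write-up has a small gap. You use that $\varphi^{-1}$ is $\Delta$-controlled, but you only assert it. In the paper, a local isomorphism is an isomorphism of nets with $\varphi$ $\Delta$-controlled; the inverse is only required to be controlled by \emph{some} entourage. Your final paragraph does not close this gap. The injective-plus-finite-dimensional argument works for an endomorphism of a single $\sB_{\{x\}}$, whereas $\varphi$ maps $\sA_{\{x\}}$ into the different algebra $\sB_{\{x\}}$. The claim is true, but it needs global surjectivity. By locality of $\sA$ and $\sB$ and $\Delta$-control of $\varphi$,
\[
\sB_{\{x\}}\otimes\sB_{X\setminus\{x\}}=\sB_X=\varphi(\sA_{\{x\}})\cdot\varphi(\sA_{X\setminus\{x\}})\subseteq\varphi(\sA_{\{x\}})\otimes\sB_{X\setminus\{x\}},
\]
which forces $\varphi(\sA_{\{x\}})=\sB_{\{x\}}$. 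Hence $\varphi(\sA_B)=\sB_B$ for every bounded $B$, so $\varphi^{-1}$ is local. This also makes your second step immediate, both transporting the decomposition and the reverse inclusion.

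The paper's route avoids this lemma. It applies Skolem--Noether to the sitewise embeddings $\varphi_x,\psi_x$ to build $\Ad_u$. Then $\psi=\Ad_u\circ\varphi$, checked on the generating site algebras, shows directly that $\psi\varphi^{-1}=\Ad_u$ is local. Once the lemma is added, your version is shorter and needs no conjugacy theorem.

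One detail you inherit from the paper's remark that local automorphisms are coarsely local: the singleton family $(\{x\})_{x\in X}$ need not be locally finite, since bounded sets may be infinite. Index the decomposition by a locally finite support $Y$ instead, as the paper does with $\bigotimes_{x\in Y}$.
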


\begin{proof}
The group homomorphism is well-defined because the conjugation of a coarsely local automorphism on $\sA$ by an isomorphism $\varphi : \sA \to \sB$ is a coarsely local automorphism on $\sB$, by an argument similar to that of Lemma~\ref{LemmaLAutNormal}.

Let $Y \subseteq X$ be a support for $\sA$, so that $\sA_B = \bigotimes_{x \in B \cap Y} \sA_x$.
Since $\varphi$ and $\psi$ are local, they induce embeddings $\varphi_x, \psi_x : \sA_x \hookrightarrow \sB_x$.
Since any two embeddings of a complex matrix algebra into another are conjugate, there exist unitaries $u_x \in \sB_x$ such that 
\[
\psi_x = \Ad_{u_x} \circ \varphi_x.
\]
These $(u_x)_{x \in Y}$ define a local automorphism $\Ad_u := \bigotimes_{x \in Y} \Ad_{u_x}$, which is in particular coarsely local.
For any automorphism $\alpha \in \Aut(\sA)$ and any $a \in \sA_X$, we then have
\begin{align*}
(\psi_*\alpha)(\psi(a))  
&= \psi(\alpha(a)) 
\\
&= \Ad_u ( \varphi(\alpha(a)))
\\
&= \Ad_u (\varphi_*\alpha(\varphi(a)))
\\
&= \Ad_u (\varphi_*\alpha(\Ad_u^{-1}(\psi(a)))).
\end{align*}
hence
\[
\psi_* \alpha = \Ad_u \circ (\varphi_*\alpha) \circ \Ad_u^{-1}.
\]
Therefore $\varphi_* \alpha$ and $\psi_*\alpha$ agree in the quotient $\QCA(\sB)$, so $\varphi_* = \psi_*$.
\end{proof}

We now define a preorder on the set\footnote{If the reader is worried about size issues here (the objects of $\Loc(X)$ might be too large to form a set), one may instead only consider nets of the form Example~\ref{ExampleCoordinateNets} in this construction.} of local matrix nets on $X$.
We set $\sA \preceq \sB$ if there exists another local matrix net $\sA'$ and a local (i.e., $\Delta$-controlled) isomorphism $\sA \otimes \sA' \cong \sB$.
If $\sA \preceq \sB$, there exists a canonical group homomorphism
\[
\QCA(\sA) \longrightarrow \QCA(\sA \otimes \sA') \cong \QCA(\sB),
\]
given by stabilization, $[\alpha] \mapsto [\alpha \otimes \id_{\sA'}]$, followed by conjugation with an arbitrary local isomorphism $\varphi : \sA \otimes \sA' \to \sB$.
This does not depend on the choice of $\varphi$ by Lemma~\ref{LemmaLocalPushforward}.

For any two local matrix nets $\sA$ and $\sA'$, we have $\sA\preceq \sA \otimes \sA'$ and $\sA' \preceq \sA \otimes \sA'$,
hence any two elements of our preorder have a common upper bound.
Therefore the associated category with precisely one morphism for each relation $\sA \preceq \sB$ is a filtered category and assigning QCA groups provides a functor from this category to the category of abelian groups.
We can therefore make the following definition:

\begin{definition}[stable QCA group]
Let $X$ be a bornological coarse space.
The group $\QCA(X)$ of \emph{stable quantum cellular automata} on $X$ is given as the filtered colimit
\[
\QCA(X) := \colim_{\sA} \QCA(\sA),
\]
taken over the preorder on the set of local matrix nets on $X$, as defined above.
\end{definition}

\begin{remark}
\label{ExplicitDescriptionQCAgroup}
Using the standard explicit representation for a filtered colimit of groups, the elements of $\QCA(X)$ are given by equivalence classes of pairs $(\sA, \alpha)$ consisting of a local matrix net $\sA$ on $X$ and an automorphism $\alpha$ of $\sA$, where the equivalence relation is generated by the following three relations:
\begin{enumerate}[(1)]
	\item $(\sA, \alpha) \sim (\sA \otimes \sA', \alpha \otimes \id_{\sA'})$ for each local matrix net $\sA'$;
	\item $(\sA, \alpha) \sim (\sA', \varphi \circ \alpha \circ \varphi^{-1})$ for each \emph{local} isomorphism $\varphi: \sA \to \sA'$.
	\item 
	$(\sA, \alpha) \sim (\sA, \id_{\sA})$ for any coarsely local automorphism $\alpha$.
\end{enumerate}
The product of $[\sA, \alpha]$ and $[\sA, \alpha']$ is given by
\[
[\sA, \alpha] \cdot [\sA, \alpha'] = [\sA, \alpha \circ \alpha'].
\]
Two general elements $[\sA, \alpha]$ and $[\sA', \alpha']$ may first be represented by automorphisms on a local matrix net $\sB$ with $\sA, \sA' \preceq \sB$ and then composed in $\Aut(\sB)$.
\end{remark}

\begin{lemma}
\label{LemmaQCAgroupAbelian}
	The group $\QCA(X)$ is abelian.
\end{lemma}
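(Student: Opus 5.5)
The plan is the standard Eckmann--Hilton / swap trick. Take two elements of $\QCA(X)$; by Remark~\ref{ExplicitDescriptionQCAgroup} we may represent both on a common local matrix net $\sA$, say $[\sA,\alpha]$ and $[\sA,\beta]$. We must show $[\sA,\alpha\circ\beta] = [\sA,\beta\circ\alpha]$.

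First stabilize by relation (1): $[\sA,\alpha] = [\sA\otimes\sA, \alpha\otimes\id]$ and $[\sA,\beta] = [\sA\otimes\sA,\beta\otimes\id]$. Next let $\sigma$ be the flip automorphism of $\sA\otimes\sA$, $a\otimes b\mapsto b\otimes a$. It is a local automorphism of $\sA \otimes \sA$, since it maps $(\sA\otimes\sA)_B = \sA_B\otimes\sA_B$ to itself, so it is $\Delta$-controlled. By relation (2), applied with $\varphi=\sigma$ as a local isomorphism $\sA\otimes\sA\to\sA\otimes\sA$, we get $[\sA\otimes\sA,\beta\otimes\id] = [\sA\otimes\sA, \sigma(\beta\otimes\id)\sigma^{-1}] = [\sA\otimes\sA,\id\otimes\beta]$. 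Alternatively, $\sigma$ is itself local, hence coarsely local, so it lies in $\LAut$, and conjugation by it is trivial in the quotient, since the quotient is by a normal subgroup (Lemma~\ref{LemmaLAutNormal}).

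Then $\alpha\otimes\id$ and $\id\otimes\beta$ commute in $\Aut(\sA\otimes\sA)$, so
\[
[\sA,\alpha][\sA,\beta] = [\sA\otimes\sA,(\alpha\otimes\id)(\id\otimes\beta)] = [\sA\otimes\sA,(\id\otimes\beta)(\alpha\otimes\id)] = [\sA,\beta][\sA,\alpha].
\]
Here I use that the group structure is compatible with the stabilization map $\QCA(\sA)\to\QCA(\sA\otimes\sA)$, which is a group homomorphism by construction of the colimit.

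The only point needing care is checking that the flip $\sigma$ is genuinely a local isomorphism of nets, i.e.\ that relation (2) (or membership in $\LAut$) applies. This holds because the tensor product of nets is defined objectwise: $\sigma$ preserves each $\sA_B\otimes\sA_B$, and the same holds for $\sigma^{-1}=\sigma$. I expect no real obstacle beyond this.
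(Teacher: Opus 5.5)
Your proof is correct, but it is organized differently from the paper's.

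The paper introduces the auxiliary operation $[\sA,\alpha] * [\sB,\beta] := [\sA\otimes\sB,\alpha\otimes\beta]$, checks the interchange law with composition, and then invokes the abstract Eckmann--Hilton argument. You instead do the swap by hand: stabilize to $\sA\otimes\sA$, conjugate by the local flip $\sigma$ to move $\beta$ into the second tensor factor, and use that $\alpha\otimes\id$ and $\id\otimes\beta$ commute in $\Aut(\sA\otimes\sA)$.

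Your route needs fewer verifications: only relation (1), the fact that stabilization is a homomorphism, and locality of $\sigma$. Of your two justifications, $\sigma\in\LAut(\sA\otimes\sA)$ is the cleaner one, since it does not even need Lemma~\ref{LemmaLocalPushforward}.

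The paper's argument tacitly requires two things it does not check: that $*$ is well defined on equivalence classes, and that $[\sB,\id]$ is a two-sided unit for $*$ coinciding with the group unit. Checking either one uses exactly the local flip isomorphisms $\sA\otimes\sB\cong\sB\otimes\sA$ that your proof makes explicit. So your version shows precisely where the symmetry of the tensor product enters.

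In return, the Eckmann--Hilton formulation identifies the composition product with the tensor product, $[\sA,\alpha]\cdot[\sB,\beta] = [\sA\otimes\sB,\alpha\otimes\beta]$, which matches the description of $K_1$ in Remark~\ref{RemarkDescriptionKgroups}. Running your computation on $\sA\otimes\sB$ instead of $\sA\otimes\sA$ gives this identity as well.
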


\begin{proof}
There is another associative monoid operation on $\QCA(X)$ given by
\[
[\sA, \alpha] * [\sB, \beta] := [\sA \otimes \sB, \alpha \otimes \beta].
\]
We have
\begin{align*}
\bigl([\sA, \alpha] \cdot [\sA, \alpha']\bigr) * \bigl([\sB, \beta] \cdot [\sB, \beta']\bigr)
&= [\sA, \alpha \circ \alpha'] *[\sB, \beta \circ \beta']
\\
&= [\sA \otimes \sB, (\alpha \circ \alpha') \otimes (\beta \circ \beta')]
\\
&= [\sA \otimes \sB, (\alpha \otimes \beta) \circ (\alpha' \otimes \beta')]
\\
&= [\sA \otimes \sB, \alpha \otimes \beta] \cdot [\sA \otimes \sB, \alpha' \otimes \beta']
\\
&= \bigl([\sA, \alpha] * [\sB, \beta]\bigr) \cdot \bigl([\sA, \alpha'] * [\sB, \beta']\bigr),
\end{align*}
Hence this monoid operation commutes with the group structure.
By the Eckmann-Hilton argument, both operations must agree and be commutative.
\end{proof}

\begin{remark}
If $f : X \to X'$ is a coarse map between bornological coarse spaces, then the image of a uniformly bounded and locally finite collection of subsets is again uniformly bounded and locally finite.
Hence if $\alpha$ is a coarsely local automorphism of a local matrix net $\sA$ on $X$, then $f_* \alpha$ is a coarsely local automorphism of $f_*\sA$.
This shows that $X \mapsto \QCA(X)$ is a functor from the category of bornological coarse spaces to abelian groups. 
\end{remark}

\subsection{Quantum circuits}
\label{SectionCircuit}

Let $X$ be a bornological coarse space. 
The following is a straightforward adaptation to general coarse spaces of the usual notion of quantum circuits in QCA theory:

\begin{definition}[quantum circuits]
Let $\sA$ be a local matrix net on $X$.
An automorphism $\alpha$ of $\sA$ is a \emph{depth one quantum circuit} if there exists a uniformly bounded, locally finite and pairwise disjoint collection $(B_i)_{i \in I}$ of subsets such that $\alpha$ is controlled by the entourage $\bigcup_{i \in I} B_i \times B_i$.
We say that $(B_i)_{i \in I}$ is the \emph{support} of the quantum circuit $\alpha$.
An automorphism $\alpha$ of $\sA$ is a \emph{depth $n$ quantum circuit} if it is a composition of at most $n$ depth one quantum circuits and a \emph{finite depth quantum circuit} if it is a quantum circuit of some finite depth $n$.
By $\Cir(\sA) \subseteq\Aut(\sA)$, we denote the subgroup of finite depth quantum circuits.
\end{definition}

If $\alpha$ is a depth one quantum circuit with support $(B_i)_{i \in I}$, then $\alpha$ restricts to an automorphism of $\sA_{B_i}$ for each $i \in I$ and acts as the identity on $\sA_{X \setminus \bigcup_{i \in I} B_i}$.
Since any automorphism of a matrix algebra is inner, there exist unitaries $u_i \in \sA_{B_i}$ such that
\[
\alpha(a) =  \Bigg(\prod_{\substack{i \in I\\ B_i \cap B \neq \emptyset}} u_i\Bigg) \cdot  a \cdot \Bigg(\prod_{\substack{i \in I\\ B_i \cap B \neq \emptyset}} u_i^*\Bigg)
\]
for each $a \in \sA_B$.
Here each product is finite since the collection $(B_i)_{i \in I}$ is locally finite.

\begin{remark}
Any depth one quantum circuit is a coarsely local automorphism.
\end{remark}

\begin{remark}
There are two interrelated problems with the notion of quantum circuits:
\begin{enumerate}[(1)]
\item 
It is claimed in \cite[Prop.~2.16]{QCAspace} that $\Cir(\sA)$ is a normal subgroup of $\Aut(\sA)$ for general discrete metric spaces.
However, the proof contains a gap that we do not know how to fix without additional hypothesis; in fact, it seems that $\Cir(\sA)$ is not a normal subgroup in general.
To illustrate the problem, suppose that $\alpha$ is a depth one quantum circuit with support $(B_i)_{i \in I}$. 
Then if $\beta \in \Aut(\sA)$ is $E$-controlled, $\beta \circ \alpha \circ \beta^{-1}$ has support $((B_i)_E)_{i \in I}$.
Now, since the fattened collection $((B_i)_E)_{i \in I}$ is no longer pairwise disjoint, one must try to regroup the sets $(B_i)_E$ into finitely many pairwise disjoint subcollections (``layers''), showing that $\beta \circ \alpha \circ \beta^{-1}$ is a finite depth quantum circuit. 
\\
However, this ``distribution into layers'' argument may fail in general.
For example, consider a graph where every vertex has finite degree, but which is not $n$-colorable for any $n$ (for example, the disjoint union $\bigcup_{n \in \N} K_n$, where $K_n$ is the complete graph with $n$ vertices).
Equip the vertex set $X$ with the pseudometric where two vertices have distance $k$ if one needs at least $k$ edges to connect them with a path.
Then the collection $(B_x)_{x \in X}$ of singleton subsets of $X$ is uniformly bounded, locally finite and pairwise disjoint, but its $1$-fattening may not be regrouped into finitely many layers with the same properties.
\item 
The notion of quantum circuits is \emph{not} a coarsely invariant notion.
The problem is that images of pairwise disjoint collections of subsets under coarse maps need not be pairwise disjoint again, and it may not even be possible to divide them into finitely many pairwise disjoint families.
\end{enumerate}
\end{remark}

There are conditions on the underlying space which ensure that the notion of a quantum circuit is nevertheless well-behaved.

\begin{definition}
Let $X$ be a coarse space.
\begin{enumerate}[(1)]
\item
A subset $Y \subseteq X$ is called \emph{uniformly locally finite} if for each entourage $E$, there exists a number $n \in \N$ such that for all $B \subseteq X$ with $B \times B \subseteq E$, we have
\[
\# (B \cap Y) \leq n.
\]
	\item 
	$X$ is said to have \emph{bounded geometry} if it admits a coarsely dense and uniformly locally finite subset $Y \subseteq X$.
	(Here $Y$ is called \emph{coarsely dense} if there exists an entourage such that $Y_E = X$.)
\end{enumerate}
\end{definition}

Of course, basic examples for spaces with bounded geometry are $\Z^n$ or $\R^n$.
However, $\R^n$ is of course not uniformly locally finite and also contains locally finite subsets which are not uniformly locally finite.

\begin{proposition}
\label{PropLocalvsCirc}
If $\sA$ admits a uniformly locally finite support, then we have 
\[
\LAut(\sA) = \Cir(\sA).
\]
\end{proposition}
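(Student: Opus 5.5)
We prove both inclusions, writing $Y$ for the uniformly locally finite support of $\sA$, so that $\sA_B = \bigotimes_{x \in B \cap Y} \sA_x$.

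The inclusion $\Cir(\sA) \subseteq \LAut(\sA)$ is the easy one. Let $\alpha$ be a depth one circuit with support $(B_i)_{i\in I}$. Then $\alpha$ preserves each $\sA_{B_i}$ and acts trivially on $\sA_{X\setminus\bigcup_i B_i}$. We use the decomposition
\[
\sA \cong \bigotimes_{i\in I} \sA_{B_i} \otimes \bigotimes_{x \notin \bigcup_i B_i} \sA_x,
\]
whose index collection $(B_i)_{i\in I}$ together with the singletons is uniformly bounded and locally finite. This exhibits $\alpha$ as coarsely local. Since $\LAut(\sA)$ is a subgroup, finite compositions of depth one circuits also lie in it.

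For the converse it suffices to show that a single coarsely local automorphism $\alpha$ is a finite depth circuit. Let $\alpha$ be subordinate to $\sA\cong\bigotimes_i \sA^{(i)}$ with $\sA^{(i)}$ supported on $B_i$, and let $(B_i)$ be uniformly bounded by $E=\bigcup_i B_i\times B_i$ and locally finite.

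\emph{Step 1: make $\alpha$ inner on each factor.} Each $\sA^{(i)}_X$ is a tensor factor sitting inside $\sA_{B_i}$, which is a finite-dimensional matrix algebra. By Proposition~\ref{PropNestedSequence} it is a tensor factor of $\sA_{B_i}$, hence itself a matrix algebra. Therefore $\alpha|_{\sA^{(i)}} = \Ad_{u_i}$ for some unitary $u_i \in \sA^{(i)}_X \subseteq \sA_{B_i}$. The $u_i$ pairwise commute, since they lie in distinct commuting factors. We may assume $B_i \subseteq Y$ after intersecting with $Y$; this is harmless because $\sA_{B_i}=\sA_{B_i\cap Y}$.

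\emph{Step 2: colour the sets.} Build a graph on $I$ with $i\sim j$ whenever $B_i\cap B_j\neq\emptyset$. Using uniform local finiteness, we bound the vertex degrees uniformly:
\begin{itemize}
\item each $B_i$ contains at most $n$ points of $Y$, where $n$ is the constant for $E$;
\item each point of $Y$ lies in at most $m$ sets $B_j$.
\end{itemize}
For the second bound, each $x$ has only finitely many $B_j\ni x$ by local finiteness, but we need a uniform $m$. This is the \textbf{main obstacle}. The natural first attempt is to reduce the decomposition: whenever $\sA^{(j)}$ is trivial we drop the index, so every remaining $B_j$ contains a point $y\in Y$ with nontrivial contribution. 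Since the factors $\sA^{(j)}$ are pairwise commuting tensor factors of $\sA$ that all sit inside $\sA_{\{x\}_E}$, a dimension count gives
\[
\prod_j \dim \sA^{(j)} \le \dim \sA_{\{x\}_E}.
\]
However, $\dim\sA_{\{x\}_E}$ is not uniformly bounded unless the local dimensions are, so this count alone does not give a uniform $m$. I would therefore instead regroup: replace the collection by the sets $B_i$ grouped so that each point is covered boundedly often. If that fails, a fallback is to use the fact that a product of commuting inner automorphisms, with supports uniformly bounded in $Y$, can be rewritten on a disjoint partition of $Y$ into pieces of bounded diameter. Choose a maximal $E$-separated subset and Voronoi-type cells; these cells form a uniformly bounded, uniformly locally finite partition $\{C_k\}$ of bounded multiplicity with respect to $E$-fattening. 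Then apply the coloring argument to the cells instead.

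\emph{Step 3: assemble the circuit.} Once the degrees are bounded by $d$, greedily colour the graph with $d+1$ colours. For each colour $c$, the product $\prod_{i\text{ of colour }c}\Ad_{u_i}$ is a depth one circuit supported on the pairwise disjoint family $\{B_i\}_{i \text{ of colour } c}$. Since the $u_i$ commute, $\alpha$ is the composition of these $d+1$ circuits, so $\alpha$ has finite depth.
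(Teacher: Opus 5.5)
Your inclusion $\Cir(\sA)\subseteq\LAut(\sA)$ is fine, with one adjustment: add singletons only for points of $Y\setminus\bigcup_i B_i$, since otherwise the enlarged family need not be locally finite. The converse breaks down at Step 2. The uniform bound $m$ on the number of sets $B_j$ containing a point of $Y$ is not merely hard to prove; it is false. So the intersection graph on $I$ can have infinite chromatic number, and Step 3 cannot be run.

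For instance, on $X=Y=\Z$ take:
\begin{itemize}
\item $\sA_x=M_2(\C)^{\otimes k_x}$ with $k_x=|x|+1$;
\item index set $I=\{(x,j)\mid x\in\Z,\ 1\le j\le k_x\}$ and $B_{(x,j)}=\{x\}$;
\item $\sA^{(x,j)}$ the $j$-th $M_2(\C)$ factor at $x$.
\end{itemize}
This family is uniformly bounded and locally finite, every factor is nontrivial, and $Y$ is uniformly locally finite. Yet $x$ lies in $k_x$ of the $B_i$, so your graph contains cliques of every size. Dropping trivial factors or counting dimensions cannot repair this.

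The missing idea is to merge factors rather than colour $I$. Set $E=\bigcup_i B_i\times B_i$ and $E[y]=\{z\in X\mid (z,y)\in E\}$. Choose for each nontrivial factor an anchor $y_i\in B_i\cap Y$. Then $B_i\subseteq E[y_i]$, so all factors with a common anchor $y$ sit inside the single block $E[y]\cap Y$, however many of them there are.

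Now colour the anchors rather than $I$. Two uniform bounds hold:
\begin{itemize}
\item $\#(E[y]\cap Y)$ is uniformly bounded, because $E[y]\times E[y]\subseteq E^{-1}\circ E$.
\item Each $x\in Y$ lies in $E[y]$ for uniformly boundedly many $y\in Y$. Indeed, $x\in E[y]$ if and only if $y\in E^{-1}[x]$, and $E^{-1}[x]\times E^{-1}[x]\subseteq E\circ E^{-1}$.
\end{itemize}
Hence the graph on $Y$ with $y\sim y'$ whenever $E[y]\cap E[y']\cap Y\neq\emptyset$ has bounded degree. This gives $Y=Y_0\sqcup\cdots\sqcup Y_N$ with each family $(E[y]\cap Y)_{y\in Y_k}$ pairwise disjoint. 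Let $\alpha_k$ act as $\alpha$ on the factors with $y_i\in Y_k$ and trivially on the others. Then $\alpha=\alpha_0\circ\cdots\circ\alpha_N$, and each $\alpha_k$ is a depth one circuit supported on $(E[y]\cap Y)_{y\in Y_k}$. This is the paper's proof.

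Your Voronoi fallback points in this direction but is not carried out. Its claim that $\alpha$ can be rewritten on a \emph{disjoint} partition is not right: the merged blocks live on fattened cells, which overlap, and that is exactly why the anchors must still be coloured. The maximal separated subset is unnecessary, since every point of $Y$ can serve as an anchor. Step 1 is also unnecessary, since a depth one circuit only needs to be suitably controlled, not explicitly implemented by unitaries.
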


In particular, it follows from Lemma~\ref{LemmaLAutNormal} above that under the above support assumption on $\sA$, $\Cir(\sA)$ is a normal subgroup of $\Aut(\sA)$.

\begin{proof}
We always have the inclusion $\Cir(\sA) \subseteq \LAut(\sA)$, so it remains to show the converse inclusion.
We start with some preliminary observations.
For an entourage $E$ of $X$, write 
\[
E[x] := \{ y \in X \mid (y, x) \in E\}.
\]
Observe that $E[x] \times E[x] \subseteq E^{-1} \circ E$, hence the collection $(E[x])_{x \in X}$ is uniformly bounded.
Morever, if $Y \subseteq X$ is a uniformly locally finite support for $\sA$, the collection $(E[y] \cap Y)_{y \in X}$ is uniformly bounded and locally finite.

Let $n \in \N$ be such that for any $B \subseteq X$ with $B \times B \subseteq E \circ E^{-1}$, we have $\#(B \cap Y) \leq n$.
Then each $x \in Y$ is contained in $E[y]$ for at most $n$ points $y \in Y$: 
Indeed, we have $x \in E[y]$ if and only if $y \in E^{-1}[x]$, and $E^{-1}[x] \times E^{-1}[x] \subseteq E \circ E^{-1}$, hence $\#(E^{-1}[x] \cap Y) \leq n$ by choice of $n$.
Therefore, after well-ordering $Y$, we obtain a decomposition $Y = Y_0 \sqcup \cdots \sqcup Y_n$ such that the collections $(E[y] \cap Y)_{y \in Y_k}$, $k=0, \dots, n$ are each pairwise disjoint.

Let now $\alpha$ be a coarsely local automorphism of $\sA$ and let $\sA \cong \bigotimes_{i \in I} \sA^{(i)}$ be a tensor decomposition as in the definition of coarse locality, where $\sA^{(i)}$ is supported on the member $B_i$ of a uniformly bounded and locally finite collection $(B_i)_{i \in I}$.
We assume that $\sA^{(i)}$ is non-trivial, so that $B_i \cap Y \neq \emptyset$ for each $i \in I$.
We now use the considerations above for the entourage
\[
E := \bigcup_{i \in I} B_i \times B_i.
\]
Here we have
\[
E[y] = \{x \in X \mid \exists i \in I: x, y \in B_i\} = \bigcup_{\substack{i \in I \\ y \in B_i}} B_i.
\]
Hence each $B_i$ is contained in some $E[y]$.
We may therefore choose for each $B_i$ an element $y_i \in Y$ such that $B_i \subseteq E[y_i]$.
This gives a decomposition $I = I_0 \sqcup \cdots \sqcup I_n$, where $I_k = \{i \in I \mid y_i \in Y_k\}$.
Moreover, this results in a decomposition $\alpha = \alpha_0 \circ \cdots \circ \alpha_n$, where $\alpha_k$ acts as $\alpha$ on $\bigotimes_{i \in I_k} \sA^{(i)}$ and trivial on $\bigotimes_{i \in I \setminus I_k} \sA^{(i)}$.
By construction, each automorphism $\alpha_k$ is a depth one quantum circuit with support $(E[y] \cap Y)_{y \in Y_k}$.
\end{proof}

\begin{remark}
If $X$ has bounded geometry, then any local matrix net $\sA$ on $X$ is isomorphic to one allowing a uniformly locally bounded support.
Indeed, let $\sA$ be an arbitrary local matrix net on $X$ and let $Y \subseteq X$ be a coarsely dense and uniformly locally finite subset.
Let $Z \subseteq X$ be a locally finite support for $\sA$, which exists by Remark~\ref{RemarkLocallyFiniteSupport}.
Let $E$ be an entourage such that $Y_E = X$.
Then for each point $z \in Z$, there exists a point $y \in Y$ such that $(z, y) \in E$.
Hence we may choose a map $f : Z \to Y$ that is close to the identity (hence coarse).
It is proper by local finiteness of $Y$ and $Z$.
By Prop.~\ref{PropCloseness}, the local matrix net $f_*\sA$ is therefore isomorphic to $\sA$ and has support $Y$ by construction.

Together with Prop.~\ref{PropLocalvsCirc}, this shows that in the case that $X$ has bounded geometry, the group $\QCA(X)$ may equivalently be presented by pairs $(\sA, \alpha)$, where $\sA$ is a local matrix net admitting a uniformly locally finite support and $\alpha$ is a (controlled) automorphism of $\sA$.
The relations are then the same as in Remark~\ref{ExplicitDescriptionQCAgroup}, except that one may declare that all quantum circuits are trivial instead of coarsely local automorphisms.
In particular, for $X = \Z^n$, $\QCA(\Z^n)$ agrees with the usual QCA group.
\end{remark}

\section{A coarse homology theory}

In this section, we construct a coarse homology theory whose degree zero groups are given by quantum cellular automata.

Let us fix set-theoretic size issues for the constructions in this section: We choose two Grothendieck universes, whose elements are called \emph{small sets} and \emph{large sets}.
All bornological coarse spaces and nets on these are assumed to have underlying sets coming from the first Grothendieck universe.
The set of all nets on a bornological coarse space will then form a large set.

\subsection{QCA and $K$-theory}

If $\mathfrak{C}$ is a symmetric monoidal category, its $K$-theory group $K_0(\mathfrak{C})$ is the abelian group obtained by group completing the commutative monoid of isomorphism classes of objects of $\mathfrak{C}$, with monoid operation given by the tensor product of $\mathfrak{C}$,
\[
[x] + [y] := [x \otimes y].
\]

To define the higher $K$-theory groups of $\mathfrak{C}$, one uses the classifying space functor $B : \Cat \to \Spaces$, the latter denoting the $\infty$-category of (large) simplicial sets.
This functor preserves commutative monoid objects: If $\mathfrak{C}$ is a symmetric monoidal category, then $B \mathfrak{C}$ is a commutative monoid in $\Spaces$.
Here one may form the $\infty$-categorical group completion functor $\CMon(\Spaces) \to \CMon^{\mathrm{grp}}(\Spaces)$ to obtain a grouplike commutative monoid in spaces, which by May's recognition principle is the same information as an infinite loop space, i.e., a connective spectrum.
In total, we get a $K$-theory functor
\[
  K : \CMon(\Cat) \xrightarrow{~~B~~} \CMon(\Spaces) \xrightarrow{~~\substack{\text{group} \\ \text{completion}}~~} \CMon^{\mathrm{grp}}(\Spaces) \simeq \Spectra_{\geq 0} 
\]
from symmetric monoidal categories to connective spectra.
The functor $B$ commutes with colimits, and group completion is a left adjoint, so $K$ also preserves colimits.

\begin{remark}
\label{RemarkDescriptionKgroups}
The $K$-theory functor has the property that
\[
\pi_0(K(\mathfrak{C})) = K_0(\mathfrak{C}),
\]
the $K$-theory group described above and one defines the higher $K$-groups of $\mathfrak{C}$ as the higher homotopy groups of $K(\mathfrak{C})$.
If $\mathfrak{C}$ is a groupoid, the first $K$-theory group $K_1(\mathfrak{C}) = \pi_1(K(\mathfrak{C}))$ can be described as follows:
Its elements are equivalence classes of pairs $(x, \alpha)$, where $x$ is an object of $\mathfrak{C}$ and $\alpha$ is an automorphism of $x$.
The equivalence relation is generated by
\begin{enumerate}[(1)]
	\item $(x, \alpha) \sim (x \otimes y, \alpha \otimes \id_y)$ for any object $y$ of $\mathfrak{C}$;
	\item $(x, \alpha) \sim (x', \varphi \circ \alpha \circ \varphi^{-1})$ for each isomorphism $\varphi : x \to x'$.
\end{enumerate}
The group operation is given by tensor product,
\[
[x, \alpha] + [y, \beta] := [x \otimes y, \alpha \otimes \beta],
\]
or, alternatively, by representing two given elements of $K_1(\mathfrak{C})$ by automorphisms of the same object $x$ of $\mathfrak{C}$ and then taking the product of $\Aut(x)$.
\end{remark}

Applying the $K$-theory functor to the various categories of nets, we obtain various functors from the category of bornological coarse spaces to connective spectra.
By definition, the category $\Loc(X)$ is \emph{monoidally cofinal} in $\Az(X)$, meaning that each object of $\Az(X)$ is a tensor factor of an object of $\Loc(X)$.
By the explicit description of $K_1$ from Remark~\ref{RemarkDescriptionKgroups}, we therefore get that inclusion induces an isomorphism
\begin{equation}
\label{K1Iso}
K_1(\Loc(X)) \cong K_1(\Az(X)).
\end{equation}
By standard properties of the $K$-theory functor, we also have an isomorphism of the higher homotopy groups, see Lemma~\ref{LemmaHigherHomotopyGroups}.

The relation to quantum cellular automata is given by the following result, which was essentially shown in \cite{QCAspace}, using properties of Quillen's plus construction.
We give a different, more elementary proof.

\begin{theorem}
\label{ThmK1QCA}
For any bornological coarse space $X$, there is a natural group isomorphism
\[
\QCA(X) \cong K_1(\Az(X)).
\]
\end{theorem}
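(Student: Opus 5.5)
By \eqref{K1Iso} it suffices to construct a natural isomorphism $\QCA(X) \cong K_1(\Loc(X))$. Both groups have explicit presentations. $\QCA(X)$ is described in Remark~\ref{ExplicitDescriptionQCAgroup}: pairs $(\sA,\alpha)$ modulo stabilization, conjugation by \emph{local} isomorphisms, and killing coarsely local automorphisms. $K_1(\Loc(X))$ is described in Remark~\ref{RemarkDescriptionKgroups}: pairs $(\sA,\alpha)$ modulo stabilization and conjugation by \emph{arbitrary} controlled isomorphisms. The plan is to compare these generating relations directly.

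First, I define $\Phi : K_1(\Loc(X)) \to \QCA(X)$ by $[\sA,\alpha] \mapsto [\sA,\alpha]$. Relation (1) is common to both presentations. The content is relation (2): if $\varphi : \sA \to \sA'$ is an arbitrary controlled isomorphism of local matrix nets, I must show $[\sA,\alpha] = [\sA',\varphi\alpha\varphi^{-1}]$ in $\QCA(X)$. I would use the swap trick.

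\begin{itemize}
\item Stabilize both sides to $\sA \otimes \sA'$, so the claim becomes $[\sA\otimes\sA', \alpha \otimes \id] = [\sA\otimes\sA', \id \otimes \varphi\alpha\varphi^{-1}]$.
\item Consider the automorphism $\sigma := \tau \circ (\varphi \otimes \varphi^{-1})$ of $\sA \otimes \sA'$, where $\tau : \sA' \otimes \sA \to \sA \otimes \sA'$ is the flip. Since $\tau$ is local and $\varphi \otimes \varphi^{-1}$ is controlled, $\sigma$ is an automorphism of the local matrix net $\sA \otimes \sA'$.
\item Conjugation by $\sigma$ sends $\alpha \otimes \id$ to $\id \otimes \varphi\alpha\varphi^{-1}$.
\item Since $\QCA(\sA \otimes \sA')$ is abelian after stabilization (Lemma~\ref{LemmaQCAgroupAbelian}), conjugation by any automorphism acts trivially on $\QCA(X)$. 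Indeed, $[\sigma\beta\sigma^{-1}] = [\sigma][\beta][\sigma]^{-1} = [\beta]$.
\end{itemize}

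Note that Lemma~\ref{LemmaLAutNormal} is only needed to know that the quotient group is well defined. The argument above already gives invariance under conjugation by arbitrary controlled automorphisms, and hence invariance under relation (2) of $K_1$. So $\Phi$ is well defined. It is a homomorphism because both group laws can be computed by composition on a common object, and it is surjective because every generator of $\QCA(X)$ is hit.

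Injectivity is the remaining step. The only extra relation in $\QCA(X)$ is (3): coarsely local automorphisms become trivial. I must show that a coarsely local $\alpha$ on $\sA \cong \bigotimes_i \sA^{(i)}$ is trivial in $K_1(\Loc(X))$. Here I would use an Eilenberg swindle along the index set: the family is locally finite, so an infinite ``block'' shuffle should be controlled. Concretely, I would try the following.

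\begin{itemize}
\item Write $\alpha = \bigotimes_i \alpha_i$ with $\alpha_i \in \Aut(\sA^{(i)})$. Each $\sA^{(i)}$ is a finite-dimensional algebra supported on the bounded set $B_i$.
\item Each $\alpha_i$ is inner, $\alpha_i = \Ad_{u_i}$. Join $u_i$ to $1$ by a path, or use the Whitehead-type identity $(u,u^{-1}) \sim 1$ performed blockwise in $\sA^{(i)} \otimes \sA^{(i)}$.
\item This yields $[\sA, \alpha] + [\sA, \alpha^{-1}]$ realized as a product of commutators of automorphisms of $\sA \otimes \sA$ that are controlled by $\bigcup_i B_i \times B_i$. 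Such commutators vanish in $K_1$.
\item Alternatively, note that in $K_1$ one has $[\sA \otimes \sA, \alpha \otimes \alpha^{-1}] = [\sA\otimes\sA, \tau'(\id \otimes \alpha)\tau'^{-1}(\id\otimes\alpha^{-1})]$ with $\tau'$ the blockwise swap, hence $[\alpha] - [\alpha] = 0$ trivially. So this alternative by itself is not enough, and a genuine trivialization of $\alpha$ is needed.
\end{itemize}

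The cleanest route I would attempt is as follows. Each $\sA^{(i)}$ is a full matrix algebra $M_{n_i}$, so $\Ad_{u_i}$ on $M_{n_i}$ can be compared with the identity after tensoring with $M_{n_i}^{\mathrm{op}}$: the automorphism $\Ad_{u_i} \otimes \id$ on $M_{n_i} \otimes M_{n_i}$ is conjugate to $\id \otimes \Ad_{u_i}$ by the flip. Taking the product of these blockwise flips over all $i$ gives a controlled isomorphism, which should allow one to write $[\alpha]$ as a commutator-type expression that dies in $K_1$. I expect this step, showing that coarsely local automorphisms vanish in $K_1(\Loc(X))$ using only controlled isomorphisms, to be the main obstacle. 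If it fails directly, I would fall back on the algebraic Whitehead lemma: $\mathrm{diag}(u, u^{-1})$ is a product of elementary matrices connected to $1$ within each block uniformly. This gives $\alpha \oplus \alpha^{-1}$ as a product of commutators in $\Aut(\sA\otimes M_2)$, and combined with a blockwise swindle it yields $[\alpha] = 0$.

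Finally, naturality in $X$ is clear, since both constructions are compatible with pushforward $f_*$.
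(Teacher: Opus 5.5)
\textbf{There is a genuine gap: the step that coarsely local automorphisms vanish in $K_1(\Loc(X))$ is never proved.}

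Your map $K_1(\Loc(X))\to\QCA(X)$ is correct and coincides with the paper's injectivity step. Your $\sigma=\tau\circ(\varphi\otimes\varphi^{-1})$ is the paper's $\Phi$, and Lemma~\ref{LemmaQCAgroupAbelian} makes conjugation by arbitrary controlled automorphisms trivial in $\QCA(X)$. The missing half is the one you yourself identify as the crux: that a coarsely local $\alpha$ is zero in $K_1(\Loc(X))$. None of your candidate routes achieves this:
\begin{enumerate}[(1)]
\item The blockwise flip $\tau'$ only exhibits $\alpha\otimes\alpha^{-1}$ as the commutator of $\alpha\otimes\id$ and $\tau'$. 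That is, it shows $[\alpha]+[\alpha^{-1}]=0$, which holds in any group.
\item Paths from $u_i$ to $1$ are invisible to the $K$-theory of the discrete groupoid $\Loc(X)$.
\item The Whitehead-lemma fallback works with $\alpha\oplus\alpha^{-1}$. But addition in $K_1(\Loc(X))$ is induced by $\otimes$, not $\oplus$, so a commutator decomposition of $\Ad_{\mathrm{diag}(u,u^{-1})}$ on $\sA\otimes M_2(\C)$ says nothing about $[\alpha]$.
\item No swindle is available on a general $X$. It would require infinitely many copies of the blocks arranged in a controlled, locally finite way, which is exactly what flasqueness supplies in Prop.~\ref{PropFlasque}. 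Each $\sA_B$, however, must stay finite-dimensional.
\end{enumerate}
Already for $X$ a point, the claim is that every automorphism of a matrix algebra dies in $K_1(\Loc(*))$. That is a purely group-theoretic statement about $PU(n)$, and no geometric trick can replace it.

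The missing ingredient is a \emph{uniform} commutator decomposition inside the blocks. Each $\alpha_i$ is $\Ad_{g_i}$ for a unitary $g_i$ in the matrix algebra $\sA^{(i)}_{B_i}$, and every element of $PU(n)$ is a single commutator. The paper cites Goto's theorem for this, but you can see it directly:
\begin{enumerate}[(1)]
\item Rescale $g_i$ by a phase into $SU(n_i)$; this does not change $\Ad_{g_i}$.
\item Diagonalize $g_i=wdw^*$.
\item Write $d=cpc^{-1}p^{-1}$, with $p$ the cyclic permutation matrix and $c$ a diagonal unitary. This is solvable exactly because $\det d=1$.
\end{enumerate}
Then $g_i=u_iv_iu_i^{-1}v_i^{-1}$ with $u_i=wcw^*$ and $v_i=wpw^*$.

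Doing this for all $i$ at once, $\Ad_u=\bigotimes_i\Ad_{u_i}$ and $\Ad_v=\bigotimes_i\Ad_{v_i}$ are automorphisms of $\sA$ controlled by $\bigcup_i B_i\times B_i$. Hence $\alpha=\Ad_u\Ad_v\Ad_u^{-1}\Ad_v^{-1}$ is a single commutator in $\Aut(\sA)$, and $[\sA,\alpha]=0$ in the abelian group $K_1(\Loc(X))$.

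The uniform bound on the number of commutators is essential. Mere perfectness of each $PU(n_i)$ would allow that number to grow with $i$. The blockwise decompositions would then not assemble into a finite product of controlled automorphisms of $\sA$.
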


\begin{proof}
By \eqref{K1Iso}, we have $K_1(\Loc(X)) \cong K_1(\Az(X))$, so we may equivalently construct a homomorphism to $K_1(\Loc(X))$.
By the generators and relations description of both groups from Remarks~\ref{RemarkDescriptionKgroups}, respectively \ref{ExplicitDescriptionQCAgroup}, we observe that both groups are given by pairs $(\sA, \alpha)$ of a local matrix net and an automorphism $\alpha$ of $\sA$.
We claim that the map
\[
\QCA(X) \longrightarrow K_1(\Loc(X)), \qquad [\sA, \alpha] \longmapsto [\sA, \alpha]
\]
is well-defined, for which we have to check that the relations defining the QCA group also hold in the $K_1$ group.

In both groups, we have the stabilization relations $(\sA, \alpha) \sim (\sA \otimes \sA', \alpha \otimes \id_{\sA'})$.
The second relation in $\QCA(X)$ is $(\sA, \alpha) \sim (\sA', \varphi \circ \alpha \circ \varphi^{-1})$ for local isomorphisms $\varphi$, which also holds in $K_1$ (even for \emph{all} isomorphisms).
The last relation of $\QCA(X)$ is that $[\sA, \alpha] = 0$ when $\alpha$ is a coarsely local automorphism.
Let $\sA \cong \bigotimes_{i \in I} \sA^{(i)}$ be a tensor decomposition as in the definition of a coarsely local automorphism, where $\sA^{(i)}$ is supported on the member $B_i$ of a uniformly bounded and locally finite collection $(B_i)_{i \in I}$ of subsets of $X$ and $\alpha$ restricts to an automorphism $\alpha_i$ of $\sA^{(i)}$ for each $i \in I$.
Because we have an isomorphism
\[
\sA_X \cong \bigotimes_{i \in I} \sA^{(i)}_{B_i}
\]
with the right hand side a tensor product of matrix algebras, each $\sA_{B_i}^{(i)}$ must be a matrix algebra, so the automorphism $\alpha_i$ of $\smash{\sA^{(i)}_{B_i}}$ is implemented by a unitary, unique up to phase.
As the projective unitary group $PU(n)$ is simple for $n\geq 2$, it follows from \cite{Goto} that each of its elements may be written as a multiplicative commutator.
Hence for each $i \in I$, we have $\alpha_i = \Ad_{u_i} \Ad_{v_i} \Ad_{u_i}^{-1} \Ad_{v_i}^{-1}$ for unitaries $u_i, v_i \in \smash{\sA^{(i)}_{B_i}}$ and globally, $\alpha$ is the multiplicative commutator of 
\[
\Ad_u = \bigotimes_{i \in I} \Ad_{u_i} \qquad \text{and} \qquad \Ad_v = \bigotimes_{i \in I} \Ad_{v_i},
\]
which are controlled since $(B_i)_{i \in I}$ is uniformly bounded. 
We obtain that $[\sA, \alpha]$ is a commutator, hence vanishes as $K_1(\Loc(X))$ is abelian. 
This shows that the map is well defined.
It is obvious from the definition of the group structures on both sides that the map is a group homomorphism.
It is also clear that it is natural in $X$.

It is clear that the group homomorphism is surjective.
To see that it is injective, we have to show that the second relation of $K_1(\Loc(X))$ also holds in $\QCA(X)$, which is $(\sA, \alpha) \sim (\sA', \varphi \circ \alpha \circ \varphi^{-1})$ whenever $\varphi : \sA \to \sA'$ is an isomorphism between local matrix nets $\sA$ and $\sA'$.
To this end, consider the automorphism $\Phi$ of $\sA \otimes \sA'$ given by
\[
\Phi(a \otimes a') := \varphi^{-1}(a') \otimes \varphi(a).
\]
Since $\Phi^{-1} = \Phi$, we have
\begin{align*}
\bigl(\Phi \circ (\alpha \otimes \id_{\sA'}) \circ \Phi^{-1}\bigr)
(a \otimes a')
&= \bigl(\Phi \circ (\alpha \otimes \id_{\sA'})\bigr)\bigl(\varphi^{-1}(a') \otimes \varphi(a)\bigr)
\\
&= \Phi \bigl((\alpha \circ \varphi^{-1})(a') \otimes \varphi(a)\bigr)
\\
&= a \otimes (\varphi \circ \alpha \circ \varphi^{-1})(a'), 
\end{align*}
hence combined with the first relation, the second relation of $K_1(\Loc(X))$ is equivalent to the relation that $(\sA, \alpha) \sim (\sA, \varphi \circ \alpha \circ \varphi^{-1})$ whenever $\varphi$ is an \emph{auto}morphism of $\sA$.
But by Lemma~\ref{LemmaQCAgroupAbelian}, $\QCA(X)$ is abelian, so the same relation holds here.
This shows injectivity.
\end{proof}

\begin{theorem}
\label{ThmFlasqueContractible}
If $X$ is flasque, then $K(\Az(X)) \simeq 0$.
\end{theorem}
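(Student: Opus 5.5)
The plan is an Eilenberg swindle, using Prop.~\ref{PropFlasque}. That proposition supplies a symmetric monoidal endofunctor $S$ of $\Az(X)$ together with a natural isomorphism $S \cong \id \otimes S$ of symmetric monoidal functors. Applying $K$ to symmetric monoidal functors gives maps of connective spectra, hence of $E_\infty$-groups. Two facts then combine:
\begin{enumerate}[(1)]
\item natural isomorphisms of symmetric monoidal functors induce homotopic maps on $K$;
\item the pointwise tensor product $F \otimes G$ of two symmetric monoidal functors induces on $K$ the sum $K(F) + K(G)$ in the grouplike $E_\infty$-structure.
\end{enumerate}
Together they give
\[
K(S) \simeq K(\id) + K(S),
\]
and since $K(\Az(X))$ is grouplike we may subtract $K(S)$ to obtain $\id_{K(\Az(X))} \simeq 0$. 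Hence $K(\Az(X)) \simeq 0$.

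The concrete steps are as follows.
\begin{enumerate}[(1)]
\item Check that $S$ is genuinely symmetric monoidal on $\Az(X)$. The structure isomorphisms $S(\sA \otimes \sB) \cong S(\sA) \otimes S(\sB)$ come from shuffling the tensor factors $f^n_*\sA \otimes f^n_*\sB$. These are local isomorphisms, so they are controlled, and they are coherent because they are just permutations of tensor factors.
\item Check that the isomorphism $S \cong \id \otimes S$ from the proof of Prop.~\ref{PropFlasque} is monoidal. Concretely, it is the composite of the closeness isomorphism $S(\sA) \cong f_*S(\sA)$ with the reindexing $f_*S(\sA) \cong \sA \otimes S(\sA)$, and both pieces are compatible with tensor products.
\item Invoke the standard fact in (2): the tensor product in $\CMon(\Cat)$ is the biproduct, and $K$ preserves products and sums, so $K(F \otimes G) \simeq K(F) + K(G)$. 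Here $\id \otimes S$ factors as
\[
\Az(X) \xrightarrow{~(\id, S)~} \Az(X) \times \Az(X) \xrightarrow{~\otimes~} \Az(X).
\]
\end{enumerate}

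For a more elementary safeguard at the level of homotopy groups, one can argue directly on $\pi_*$: $K_n(S) = \id + K_n(S)$ as endomorphisms of an abelian group forces $\id = 0$, so every homotopy group of $K(\Az(X))$ vanishes. For a connective spectrum this gives contractibility. Faithfulness of $S$ is not needed here.

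The main obstacle is step (2) and the justification of (1): $S \cong \id \otimes S$ must be an isomorphism of \emph{symmetric monoidal} functors, not merely of plain functors. Otherwise the identification $K(\id \otimes S) = K(\id) + K(S)$ is only valid on $\pi_0$. I would first try to verify this by hand, since all structure maps are permutations of infinite tensor factors together with the identity map on global algebras. If that becomes messy, the fallback is to work on $\pi_0$ and $\pi_1$ using Remark~\ref{RemarkDescriptionKgroups}, and then argue for the higher groups via the group completion / plus-construction description of $K_n$.
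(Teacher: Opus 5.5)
Your proposal is correct and is the same Eilenberg swindle the paper uses: the isomorphism $S \cong \id \otimes S$ from Prop.~\ref{PropFlasque} gives $S_* \simeq \id + S_*$ on $K$-theory, hence $\id \simeq 0$. Your checks that $S$ and the isomorphism are symmetric monoidal fill in what the paper compresses into ``tensor product corresponds to addition in $K$-theory''; the checks go through because every map involved is the identity on global algebras up to permuting tensor factors. One small correction to your step (2): the isomorphism is $S(\sA) = \sA \otimes f_*S(\sA) \cong \sA \otimes S(\sA)$, a reindexing identity followed by the closeness isomorphism on the second factor. It is not a reindexing $f_*S(\sA) \cong \sA \otimes S(\sA)$, which does not exist as stated.
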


\begin{proof}
Because $X$ is flasque, by Prop.~\ref{PropFlasque} there exists an endofunctor $S : \Az(X) \to \Az(X)$ such that $S \cong \id \otimes S$.
Since tensor product corresponds to addition in $K$-theory, this yields a self-map of spectra
\[
S_* : K(\Az(X)) \to K(\Az(X))
\]
with
\[
S_* \simeq \id + S_*.
\]
This implies $\id \simeq 0$, hence $K(\Az(X)) \simeq 0$.
\end{proof}

The following result is more difficult to prove; the proof will be carried out in the next section.
In the statement below, $X \otimes \R$ is the bornological coarse space with underlying set $X \times \R$ and the product coarse structure and bornology, see Example~\ref{TensorProductBornCoarse}.

\begin{theorem}
\label{ThmLoopRn}
For each bornological coarse space $X$, we have a canonical map of spectra
\[
K(\Az(X)) \longrightarrow \Omega K(\Az(X \otimes \R)),
\]
which induces an isomorphism on homotopy groups in non-negative degrees.
\end{theorem}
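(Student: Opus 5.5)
The plan is a Mayer--Vietoris/Eilenberg swindle argument for the decomposition $X \otimes \R = (X \otimes \R_{\leq 0}) \cup (X \otimes \R_{\geq 0})$. Write $\cY_- = \{X \otimes \R_{\leq 0}\}$ and $\cY_+ = \{X \otimes \R_{\geq 0}\}$ for the generated big families. Their intersection $\cY_- \Cap \cY_+$ is the big family generated by $X \otimes \{0\}$. By Corollary~\ref{CorollaryEquivalenceBigFamily}, $\Az(\cY_- \Cap \cY_+) \simeq \Az(X)$, and $\Az(\cY_\pm) \simeq \Az(X \otimes \R_{\geq 0})$. Since $X \otimes \R_{\geq 0}$ is flasque via the shift $(x,t) \mapsto (x,t+1)$, Theorem~\ref{ThmFlasqueContractible} gives $K(\Az(\cY_\pm)) \simeq 0$. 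We therefore get a commutative square of spectra
\[
K(\Az(X)) \to K(\Az(\cY_-)) \simeq 0, \qquad K(\Az(X)) \to K(\Az(\cY_+)) \simeq 0,
\]
both mapping further to $K(\Az(X \otimes \R))$. The two nullhomotopies produce the canonical map $K(\Az(X)) \to \Omega K(\Az(X \otimes \R))$. The task is then to show that this square is cartesian on connective covers, or at least that it induces a long exact Mayer--Vietoris sequence in degrees $\geq 0$.

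The key geometric input is a decomposition statement for Azumaya nets on $X \otimes \R$. Every $\sA \in \Az(X \otimes \R)$ should be isomorphic, after stabilization, to $\sA_- \otimes \sA_+$ with $\sA_\pm$ supported in $\cY_\pm$. The procedure is as follows. First, by Corollary~\ref{CorollaryAzumayaSubnet}, realize $\sA$ as a tensor factor of a local matrix net $\sB$. Next, use the locality of $\sB$ to split $\sB = \sB_{<0} \otimes \sB_{\geq 0}$. Finally, obtain the splitting of $\sA$ from the nested tensor factor results (Prop.~\ref{PropNestedSequence} and Prop.~\ref{PropImageOfTensorFactor}). The idea is to compare $\sA$ with the subnets of $\sB$ living far to the left and far to the right, and to produce an Azumaya ``defect'' net near $X \otimes \{0\}$. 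This defect is what makes the boundary map land in $K(\Az(X))$: for $\pi_0$, an automorphism $\alpha$ of $\sA$ maps the right half $\sB_{\geq 0}$ to a tensor factor $\alpha_*\sB_{\geq 0}$, and the relative commutant of $\alpha_*\sB_{\geq 0}$ and $\sB_{\geq R}$ is an Azumaya net on a slab around $0$. This is a GNVW-style index. Morphisms are handled similarly, so that we get an equivalence of symmetric monoidal categories up to controlled isomorphism. This in turn yields an excision statement: the cofibre of $K(\Az(\cY_-)) \oplus K(\Az(\cY_+)) \to K(\Az(X \otimes \R))$ agrees with the cofibre for the intersection, shifted by one.

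To turn this into the homotopy-group statement, I would check the map degreewise. In degree $0$ we have $\pi_0 K(\Az(X)) \to \pi_1 K(\Az(X \otimes \R)) \cong \QCA(X \otimes \R)$ by Theorem~\ref{ThmK1QCA}. Injectivity and surjectivity here amount to the defect construction above being inverse to the map sending $[\sA]$ to the ``shift'' QCA of $\sA$ across $0$: tensor with a swindle $S$, and translate the copies of $\sA$ one step along $\R$. In higher degrees I would use a general group-completion argument: a symmetric monoidal functor that is essentially surjective up to stabilization and induces equivalences on the relevant automorphism groups after stabilization gives an equivalence on $K$ in positive degrees, which is the analogue of Lemma~\ref{LemmaHigherHomotopyGroups}. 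I expect the main obstacle to be the controlled splitting lemma: showing that the defect net is indeed Azumaya, that it is well defined up to controlled isomorphism independently of the cut and of $R$, and that everything is coherent enough to give a map of spectra rather than only a map on $\pi_0$ and $\pi_1$. My first attempt would be to construct the defect as the commutant of $\sB_{\geq R}$ inside $\alpha_*\sB_{\geq 0}$, and then to apply Lemma~\ref{LemmaCharacterizationTensorFactor} with the entourage controlling $\alpha$.
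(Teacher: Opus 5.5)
Your proposal has a genuine gap, and it sits in the step you call the key geometric input. Suppose an arbitrary Azumaya net $\sA$ on $X\otimes\R$ split as $\sA_-\otimes\sA_+$, even after tensoring with a local matrix net (which itself splits by locality). Then $[\sA]=[\sA_-]+[\sA_+]=0$ in $K_0(\Az(X\otimes\R))$, because $K(\Az(\cY_\pm))\simeq 0$ by flasqueness. Likewise, if your square with corners $K(\Az(\cY_\pm))\simeq 0$ were cartesian, it would force $K_0(\Az(X\otimes\R))\cong\pi_{-1}K(\Az(X))=0$.

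This is exactly the degree-zero connectivity problem the paper flags for $K\circ\Az$, and the conclusion is not expected to hold. Via Theorem~\ref{ThmQCAAz} it would give $\QCA(X\otimes\R\otimes\Z)=0$ for all $X$, e.g.\ $\QCA(\Z^3)=0$.

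The missing idea is to avoid splitting general Azumaya nets. Instead, work with the subcategories $\Az_{\cY\Cap\cZ}(-)$ of nets that are \emph{local away from} a member of $\cY\Cap\cZ$. For these nets the splitting is tautological, $\sC\cong\sC|_{Y\cap Z}\otimes\sC|_{Y\setminus Z}\otimes\sC|_{Z\setminus Y}$. The square of Prop.~\ref{PropSupportedVersion} is then genuinely cartesian, proved via Thomason's double mapping cylinder and Quillen's Theorem A.

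Your defect construction does occur in that proof, but in a different role. Take an object $\varphi:\sC\to\sA\otimes\sB^{\cY}\otimes\sB^{\cZ}$ of the comma category. Props.~\ref{PropImageOfTensorFactor} and~\ref{PropNestedSequence} show that $\varphi_*(\sC|_{Y_E\setminus Z_E})$ is a tensor factor of $\sB^{\cY}$, and its commutant is an Azumaya net near the cut. This produces initial objects in the filtered pieces $(\sC\downarrow T)_{Y,Z,E}$.

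Two further steps are needed after that:
\begin{itemize}
\item Lemma~\ref{LemmaFlasqueExtended}, a separate swindle, since $\Az_{\cY\Cap\cZ}(\cY)$ is only a subcategory of $\Az(\cY)$.
\item Lemma~\ref{LemmaHigherHomotopyGroups}, to compare $\Az_{\cY\Cap\cZ}(\cY\Cup\cZ)$ with $\Az(X\otimes\R)$. This comparison is only an isomorphism on $K_{\geq 1}$, which is precisely why the theorem asserts an isomorphism only in nonnegative degrees of $\Omega K$.
\end{itemize}
The map you construct from nullhomotopies is fine; what is missing is any way of proving it is an isomorphism.

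Your degreewise fallback does not close the gap either. In degree $0$, the GNVW-type inverse (the shift construction, cf.\ the remark after Theorem~\ref{ThmQCAAz}) is only sketched. For $n\geq 1$, the group-completion criterion you invoke compares two symmetric monoidal categories along a symmetric monoidal functor, so it controls maps $K_n\to K_n$. The map $K_n(\Az(X))\to K_{n+1}(\Az(X\otimes\R))$ is instead a boundary map that shifts degree and is not induced by any such functor, so that criterion says nothing about it. Without an honest cartesian square of spectra, your plan has no mechanism producing the isomorphisms in positive degrees.
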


\subsection{The Mayer-Vietoris axiom}
\label{SectionMayerVietoris}

Following \cite[\S4]{QCAspace}, we use the double mapping cylinder construction of Thomason, see \cite{ThomasonOnline,Thomason1982}.
Suppose we are given a span of symmetric monoidal categories and symmetric monoidal functors
\begin{equation}
\label{squareInvolvingP}
\begin{tikzcd}
C \ar[r, "\iota_1"] \ar[d, "\iota_2"'] & D_1 \ar[d, dashed]
\\
D_2 \ar[r, dashed] & P
\end{tikzcd}
\end{equation}
such that $C$ is a groupoid.
The \emph{double mapping
cylinder} of this span is the symmetric monoidal category $P$ defined as follows.
\begin{enumerate}[(1)]
\item Objects are triples $(c, d_1, d_2)$ with $c$ an object of $C$ and $d_i$ objects of $D_i$.
\item A morphism $(c, d_1, d_2) \to (c', d'_1, d'_2)$ is an equivalence class of quintuples $(f, f_1, f_2, c_1, c_2)$, where $c_1, c_2$ are objects of $C$, $f : c \to c_1 \otimes c' \otimes c_2$ is an isomorphism in $C$ and $f_i : \iota_i(c_i) \otimes d_i \to d'_i$ are morphisms in $D_i$.
Two such tuples are equivalent if they differ  by replacing the objects $c_1$ and $c_2$ with isomorphic ones.
\end{enumerate}

The category $P$ receives symmetric monoidal functors from $D_1$ and $D_2$ in the obvious way, and there is a natural transformation witnessing the commutativity of the square \eqref{squareInvolvingP}.
The significance of this construction is that after applying the $K$-theory functor, one obtains a pushout diagram of spectra
\begin{equation}
\label{ThomasonHomotopyPullback}
\begin{tikzcd}
K(C) \ar[r]  \ar[d] & K(D_1) \ar[d]
\\
K(D_2) \ar[r]& K(P).
\end{tikzcd}
\end{equation}

Let $X$ be a bornological coarse space.

\begin{definition}
For a subset $Y \subseteq X$, the \emph{restriction} $\sA|_Y$ of a net $\sA$ on $X$ to $Y$ is the net on $X$ given by
\[
(\sA|_Y)_B := \sA_{Y \cap B}.
\]
For a big family $\cW$ in a bornological coarse space $X$, denote by 
\[
\Az_{\cW}(X) \subseteq \Az(X)
\]
the full subcategory of those Azumaya nets that are a tensor product of a local net on $X$ and an Azumaya net supported on $\cW$.
Put differently,  $\Az_{\cW}(X)$ consists of those nets $\sA$ on $X$ for which there exists a member $W$ of $\cW$ such that we have a tensor decomposition 
 \[
 \sA = \sA|_W \otimes \sA|_{X \setminus W},
 \]
  where $\sA|_W$ is an Azumaya net and $\sA|_{X \setminus W}$ is a local matrix net.
For a net $\sA$ with these properties, we say that $\sA$ is \emph{local away from $W$}.
 If $\cV$ is another big family with $\cW \subseteq \cY$, we denote by $\Az_{\cW}(\cV)$ the full subcategory of $\Az_{\cW}(X)$ consisting of those nets that are supported on some member of $\cV$.
\end{definition}

\begin{lemma}
\label{LemmaHigherHomotopyGroups}
For any pair of big families $\cW \subseteq \cV$, the inclusions induce isomorphisms
\[
K_n(\Loc(\cV)) \cong K_n(\Az_{\cW}(\cV)) \cong K_n(\Az(\cV))
\]
in all degree $n \geq 1$.
\end{lemma}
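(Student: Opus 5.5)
The plan is to deduce the statement from a general cofinality principle for $K$-theory of symmetric monoidal groupoids. If $\mathfrak{D} \subseteq \mathfrak{C}$ is a full symmetric monoidal subgroupoid that is \emph{monoidally cofinal}, meaning that every object $x$ of $\mathfrak{C}$ admits an object $y$ with $x \otimes y$ isomorphic to an object of $\mathfrak{D}$, then $K(\mathfrak{D}) \to K(\mathfrak{C})$ is an isomorphism on $\pi_n$ for all $n \geq 1$ and injective on $\pi_0$. This is the standard cofinality theorem: the group completions $\Omega B(\mathfrak{D}^{-1}\mathfrak{D})$ and $\Omega B(\mathfrak{C}^{-1}\mathfrak{C})$ have their base-point components computed as the plus constructions of $B\Aut_\infty$. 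Cofinality makes the colimits $\colim_x B\Aut(x)$ over the two filtered systems agree, since the system for $\mathfrak{D}$ is cofinal in the one for $\mathfrak{C}$. Alternatively, one can use Quillen's ``group completion'' theorem, which gives $H_*(\Omega_0 K) = \colim H_*(B\Aut(x))$ and then a homology comparison of simple spaces. I would invoke this as a known result (Quillen, Thomason, Weibel's $K$-book), since the paper already signalled that it follows ``by standard properties of the $K$-theory functor''.

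It then remains to check monoidal cofinality for the chain $\Loc(\cV) \subseteq \Az_{\cW}(\cV) \subseteq \Az(\cV)$. Because cofinality of $\Loc(\cV)$ in $\Az(\cV)$ implies cofinality of $\Loc(\cV)$ in $\Az_\cW(\cV)$, and also of $\Az_\cW(\cV)$ in $\Az(\cV)$ (as $\Loc(\cV) \subseteq \Az_\cW(\cV)$), it suffices to show that every $\sA \in \Az(\cV)$ has a complement $\sA'$ with $\sA \otimes \sA' \in \Loc(\cV)$ up to isomorphism. Two-out-of-three for isomorphisms on $\pi_n$ then gives both isomorphisms.

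For this, let $\sA$ be supported on $V \in \cV$. By the Azumaya condition there are a net $\sA'$ and a local matrix net $\sB$ on $X$ with $\sA \otimes \sA' \cong \sB$. The obstacle is that $\sA'$ and $\sB$ need not be supported in $\cV$. To fix this I would restrict. Let $\varphi$ be the isomorphism, controlled by an entourage $E$. By Prop.~\ref{PropImageOfTensorFactor}, $\varphi_*\sA$ is a tensor factor of $\sB$, and it is supported on $V_E$. Since $\sB$ is local, $\sB|_{V_E}$ is a tensor factor of $\sB$ as well, with commutant $\sB|_{X\setminus V_E}$, and it contains $\varphi_*\sA$. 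Prop.~\ref{PropNestedSequence} then shows that $\varphi_*\sA$ is a tensor factor of $\sB|_{V_E}$, so
\[
\sA \otimes \sA'' \cong \sB|_{V_E}
\]
with $\sA'' = (\varphi_*\sA)' \cap \sB|_{V_E}$. Here $\sB|_{V_E}$ is a local matrix net supported on $V_E \in \cV$, and $\sA''$ is also supported there.

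The only delicate point is verifying that the cofinality theorem applies. The categories here are groupoids and $\otimes$ is symmetric monoidal, so it does. I would also double-check that the statement is only claimed for $n \geq 1$, because on $K_0$ one only gets injectivity, which is consistent with the lemma.
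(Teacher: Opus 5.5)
Your proposal is correct and follows the paper's route: monoidal cofinality of $\Loc(\cV)$, combined with the standard cofinality theorem for $K_n$, $n \geq 1$. You also supply the support verification that the paper's one-line proof leaves implicit, namely restricting $\sB$ to $V_E$ and applying Props.~\ref{PropImageOfTensorFactor} and~\ref{PropNestedSequence}.

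The one slip is the claim that cofinality of $\Loc(\cV)$ in $\Az(\cV)$ implies cofinality in $\Az_{\cW}(\cV)$, since your complement $\sA''$ is not obviously in $\Az_{\cW}(\cV)$. This is harmless: both inclusions into $\Az(\cV)$ are cofinal, so two-out-of-three handles $\Loc(\cV) \to \Az_{\cW}(\cV)$.
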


\begin{proof}
Both $\Az(\cV)$ and $\Az_{\cW}(\cV)$ are monoidally cofinal in $\Loc(\cV)$, meaning that each object of $\Az(\cV)$, respectively $\Az_{\cW}(\cV)$, is a tensor factor of an object of $\Loc(\cV)$.
This implies that the inclusion induces an isomorphism on the higher $K$-theory groups.
\end{proof}

For two big families $\cY$, $\cZ$ in $X$, we then get a commutative diagram
\begin{equation}
\label{SquareLocalized}
\begin{tikzcd}
  \Az(\cY \Cap \cZ) \ar[r]
  \ar[d]
  & 
  \Az_{\cY \Cap \cZ}(\cY)
  \ar[d]
  \\
  \Az_{\cY \Cap \cZ}(\cZ) \ar[r] &
  \Az_{\cY \Cap \cZ}(\cY \Cup \cZ).
\end{tikzcd}
\end{equation}
In the following, let $P$ be the double mapping cylinder for the top left span in the above diagram.
There is an obvious symmetric monoidal functor
\[
T : P \longrightarrow \Az_{\cY \Cap \cZ}(\cY \Cup \cZ), \qquad (\sA, \sB^{\cY}, \sB^{\cZ}) \longrightarrow \sA\otimes \sB^{\cY} \otimes \sB^{\cZ},
\]
which is easily seen to be faithful using the equivalence relation on morphisms in $P$.
We then have the following result: 

\begin{proposition}
\label{PropSupportedVersion}
The induced map $K(T) : K(P) \to K(\Az_{\cY \Cap \cZ}(\cY \Cup \cZ))$ is a homotopy equivalence.
Consequently, the diagram
\[
\begin{tikzcd}
  K(\Az(\cY \Cap \cZ)) \ar[r]
  \ar[d]
  & 
  K(\Az_{\cY \Cap \cZ}(\cY))
  \ar[d]
  \\
  K(\Az_{\cY \Cap \cZ}(\cZ)) \ar[r] &
  K(\Az_{\cY \Cap \cZ}(\cY \Cup \cZ)).
\end{tikzcd}
\]
is a cartesian square of spectra.
\end{proposition}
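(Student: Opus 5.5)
The plan is to show that $T$ induces an isomorphism on $\pi_0$ of $K$-theory and on all higher homotopy groups. The cartesian square then follows from Thomason's pushout square \eqref{ThomasonHomotopyPullback}, since pushout and pullback squares coincide in spectra. Instead of comparing homotopy groups one at a time, I would prove something stronger about $T$. Namely: $T$ is essentially surjective, and every morphism between objects in the image of $T$ lifts to a morphism of $P$ after tensoring both source and target with suitable objects of $\Az(\cY \Cap \cZ)$. Since $T$ is faithful, this makes $T$ a stable equivalence of symmetric monoidal groupoids. Then $BP \to B\Az_{\cY\Cap\cZ}(\cY\Cup\cZ)$ becomes an equivalence after group completion, and so $K(T)$ is an equivalence.

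\emph{Essential surjectivity.} Let $\sA$ be an object of $\Az_{\cY\Cap\cZ}(\cY\Cup\cZ)$. Then $\sA$ is supported on $Y \cup Z$ with $Y \in \cY$ and $Z \in \cZ$, and it is local away from some $W \in \cY \Cap \cZ$. Write $\sA = \sA|_W \otimes \sA|_{X\setminus W}$, where the second factor is a local matrix net. Because it is local, it splits further as
\[
\sA|_{X\setminus W} = \sA|_{Y\setminus W} \otimes \sA|_{(Z\setminus Y)\setminus W}.
\]
The triple $(\sA|_W, \sA|_{Y\setminus W}, \sA|_{(Z\setminus Y)\setminus W})$ is then an object of $P$, and $T$ sends it to $\sA$ via a local isomorphism. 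The only point to check is that each piece lies in the correct category: for instance, $\sA|_{Y\setminus W}$ must be local away from a member of $\cY\Cap\cZ$. This holds trivially, because the piece is local.

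\emph{Lifting morphisms (main obstacle).} Let $\varphi : \sA \otimes \sB^{\cY} \otimes \sB^{\cZ} \to \sA'\otimes\sB'^{\cY}\otimes\sB'^{\cZ}$ be an $E$-controlled isomorphism. After tensoring with Azumaya complements from $\Az(\cY\Cap\cZ)$ (Corollary~\ref{CorollaryAzumayaSubnet}), I may assume all nets are semilocal tensor factors of local matrix nets. The subnet $\varphi_*(\sA\otimes\sB^{\cY})$ is a tensor factor of the target by Prop.~\ref{PropImageOfTensorFactor}, and it is supported on $Y_E$. The target restricted to the $E$-fattening of $Y$ together with $W$ is also a tensor factor, namely $\sA' \otimes \sB'^{\cY} \otimes \sB'^{\cZ}|_{Y_E}$, using locality of $\sB'^{\cZ}$ away from $W$. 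By Prop.~\ref{PropNestedSequence}, the commutant of the first subnet inside the second is a tensor factor $\sC_1$. This $\sC_1$ is supported on $Y_E \cap (Z \cup W)$, which lies in $\cY \Cap \cZ$, and it is Azumaya. Doing the same with $\cZ$ in place of $\cY$ produces $\sC_2$. This yields exactly the data of a quintuple $(f, f_1, f_2, \sC_1, \sC_2)$ after stabilizing the source by $\sC_1\otimes\sC_2$ and comparing with $\sA'$.

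I expect this second step to be the hardest. The difficulty is making sure the pieces really are Azumaya: one has to exhibit complements, obtained by tensoring with the images of complements and using the nested-factor proposition twice. One also has to check that the resulting morphism of $P$ maps to $\varphi$ up to the stabilization, which requires bookkeeping of controlled entourages of the form $E\circ F\circ E$. Finally, uniqueness up to the equivalence relation on morphisms of $P$ is what makes the faithfulness of $T$ and this lifting combine into a bijection on stabilized automorphism groups.
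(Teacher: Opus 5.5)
There is a genuine gap where you pass from your two steps to the conclusion, and the lifting step is also false as stated.

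\textbf{The framework does not apply to $P$.} $P$ is not a groupoid. A morphism $(f, f_1, f_2, c_1, c_2)$ with $c_1$ or $c_2$ nontrivial is in general not invertible. For example, the morphism $(c, d_1, d_2) \to (\mathbf{1}, \iota_1(c) \otimes d_1, d_2)$, with $\mathbf{1}$ the unit, moves $c$ into the first slot and has no inverse when $c_X \neq \C$. Hence $BP$ is not a disjoint union of classifying spaces of automorphism groups. Your criterion runs: essentially surjective, faithful, and morphisms lift after stabilization; hence isomorphisms on $K_0$ and on stabilized automorphism groups; hence a $K$-equivalence by the group completion theorem. That is a statement about symmetric monoidal \emph{groupoids}, and it does not apply to $P$. ``Stabilized automorphism groups'' of $P$ do not compute $\pi_*K(P)$. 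Any correct argument has to see the non-invertible morphisms of $P$, which is why the paper uses Quillen's Theorem~A on the comma categories $\sC \downarrow T$.

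\textbf{The lifting claim fails.} Under $T$, every morphism of $P$ carries $\sB^{\cY}$ into the $\cY$-slot of the target, via $f_1 : \iota_1(c_1) \otimes \sB^{\cY} \to \tilde{\sB}^{\cY}$. Tensoring source and target with objects of $\Az(\cY \Cap \cZ)$ does not change this. A controlled isomorphism between objects in the image of $T$ need not respect the slots. Take $X = \R$, $\cY = \{\R_{\leq 0}\}$, $\cZ = \{\R_{\geq 0}\}$, and put $\sB^{\cY} = M_2(\C)$ at $-1$ and $\sB^{\cZ} = M_2(\C)$ at $+1$. The swap of the two factors is an automorphism of $T(\mathbf{1}, \sB^{\cY}, \sB^{\cZ})$ that is not $T$ of any morphism of $P$, before or after such stabilization. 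Correspondingly, your commutant $\sC_1$ only yields an isomorphism of the shape $\sA \otimes \sB^{\cY} \otimes \sC_1 \cong \tilde{\sA} \otimes \tilde{\sB}^{\cY} \otimes \tilde{\sB}^{\cZ}|_{Y_E \cup W}$. A morphism of $P$ instead requires a map $\sC_1 \otimes \sB^{\cY} \to \tilde{\sB}^{\cY}$.

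\textbf{What is true, and what the paper proves.} Morphisms lift only as zigzags through \emph{split} objects. Suppose $\varphi : \sC \to T(\sA, \sB^{\cY}, \sB^{\cZ})$ is $E$-controlled. Then $\varphi_*(\sC|_{Y_E \setminus Z_E})$ is supported away from $Z$, hence lies in $\sB^{\cY}$. So Prop.~\ref{PropNestedSequence} applies inside $\sB^{\cY}$ and gives a morphism $\texttt{split}_{Y_E, Z_E}(\sC) \to (\sA, \sB^{\cY}, \sB^{\cZ})$ in $P$, unique by faithfulness of $T$. Thus the split objects are initial in the subcategories $(\sC \downarrow T)_{Y,Z,E}$, whose filtered union is $\sC \downarrow T$. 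Your essential-surjectivity construction is exactly these split objects and is fine. Theorem~A then shows that $BT$ is an equivalence even before group completion.
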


Since by the properties of the double mapping cylinder construction, the square \eqref{ThomasonHomotopyPullback} involving $P$ is cartesian, we see that for the proof of Prop.~\ref{PropSupportedVersion}, it suffices to show that $K(T)$ is a homotopy equivalence.

Quillen's theorem A \cite{Quillen1973HigherK} states that this is the case provided that we check that the classifying spaces of the comma categories $\sC \downarrow T$ are weakly contractible for every object $ \sC$. 
Recall that the objects of $\sC \downarrow T$ are morphisms in $\Az_{\cY \Cap \cZ}(\cY \Cup \cZ)$ of the form
	\begin{equation}
	\label{ObjectCommaCategory}
	\varphi : \sC \to T(\sA, \sB^{\cY}, \sB^{\cZ})
	\end{equation}
 and the morphisms of the comma category are commutative diagrams
\[
\begin{tikzcd}[row sep=0.4cm]
& T(\sA, \sB^{\cY}, \sB^{\cZ}) \ar[dd, "T(\psi{,} \psi_{\cY}{,} \psi_{\cZ}{,} \sA^{\cY}{,} \sA^{\cZ})"]
\\
\sC \ar[ur, "\varphi"] \ar[dr, "\tilde{\varphi}"']
\\
& T(\tilde{\sA}, \tilde{\sB}^{\cY}, \tilde{\sB}^{\cZ}).
\end{tikzcd}
\]

\begin{lemma}
\label{ContractibilityLemma}
For every object $\sC$ in $\Az_{\cY \Cap \cZ}(\cY \Cup \cZ)$, the classifying space of the comma category $\sC \downarrow T$ is weakly contractible.
\end{lemma}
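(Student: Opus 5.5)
The strategy is to show that $\sC \downarrow T$ is nonempty and \emph{filtered}, or at least cofiltered-like in the sense of having a cone over every finite diagram; the classifying space of such a category is weakly contractible. Nonemptiness comes from splitting $\sC$ itself. Since $\sC$ lies in $\Az_{\cY\Cap\cZ}(\cY\Cup\cZ)$, it is local away from some $W \in \cY\Cap\cZ$. It is also supported on $Y\cup Z$ with $Y\in\cY$ and $Z\in\cZ$. We may enlarge $W$ and assume $W\subseteq Y\cup Z$. Then
\[
\sC = \sC|_W \otimes \sC|_{Y\setminus(W\cup Z)}\otimes \sC|_{(Z\setminus W)} \otimes \sC|_{X\setminus(Y\cup Z)},
\]
where the factors away from $W$ are restrictions of the local matrix net $\sC|_{X\setminus W}$, hence local and supported in $Y$, respectively $Z$. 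This gives an object $(\sC|_W,\ \sC|_{Y\setminus (W\cup Z)},\ \sC|_{Z\setminus W})$ of $P$ together with the identity $\sC\to T(\cdots)$. So $\sC\downarrow T$ is nonempty.

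\textbf{Step 1: reduction to the split case.} Given any object $\varphi:\sC\to T(\sA,\sB^{\cY},\sB^{\cZ})$, the factor $\sB^{\cY}$ is local away from some $W_1\in\cY\Cap\cZ$. Its local part can be split further along $Z$, by restricting, into a part supported near $\cY\Cap\cZ$ and a part on $Y\setminus Z$. Likewise for $\sB^{\cZ}$. Using the morphisms of $P$, in which factors coming from $C=\Az(\cY\Cap\cZ)$ may be moved between the three slots, we would produce a morphism in $\sC\downarrow T$ to a \emph{standard} object: one whose $\cY$- and $\cZ$-components are local matrix nets supported away from a common $W\in\cY\Cap\cZ$, and whose $C$-component collects everything near $W$.

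\textbf{Step 2: connecting two objects.} Given two objects $\varphi,\tilde\varphi$, we must find a third object receiving maps from both, together with a way to equalize parallel morphisms. The natural candidate is obtained by tensoring. Form $T(\sA\otimes\tilde\sA\otimes \sD,\dots)$, where $\sD$ is a complement making things matrix. The key input is Prop.~\ref{PropImageOfTensorFactor} together with Prop.~\ref{PropNestedSequence}. The composite isomorphism $\tilde\varphi\circ\varphi^{-1}$ is controlled by some entourage $E$. It therefore carries the $\cY$-part supported on $Y\setminus W$ to a tensor factor supported on $(Y\setminus W)_E$. After fattening $W$ to $W_{E}$, which still lies in $\cY\Cap\cZ$ because big families are closed under fattenings, the image is a tensor factor living in the $\cY$-slot up to a $C$-correction. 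Prop.~\ref{PropNestedSequence} identifies the complements inside the $\cY$- and $\cZ$-components, so the discrepancy is absorbed into the $C=\Az(\cY\Cap\cZ)$-slot. This is exactly what the quintuples $(f,f_1,f_2,c_1,c_2)$ allow.

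\textbf{Step 3: equalizing parallel arrows.} Here faithfulness of $T$ is used. Two morphisms in $\sC\downarrow T$ with the same source and target have the same image under $T$. By the faithfulness observed before the proposition, they are therefore equal, so the category is a preorder in which any two objects admit a common upper bound. Its classifying space is then contractible.

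The main obstacle is Step 2. One must verify that the transported factors are genuinely Azumaya and of the right support type after fattening. In particular, the commutant of a local piece under a controlled isomorphism must split as an Azumaya net on $W_E$ tensored with a local matrix net outside it. I would first try to prove this via Lemma~\ref{LemmaCharacterizationTensorFactor}, using the control entourage to check \eqref{ControlledInclusion} directly.
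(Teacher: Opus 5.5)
There is a genuine gap: you are trying to prove the wrong kind of directedness. In $P$, a morphism $(c,d_1,d_2)\to(c',d'_1,d'_2)$ requires $c\cong c_1\otimes c'\otimes c_2$ and $d'_i\cong\iota_i(c_i)\otimes d_i$. So along morphisms, tensor factors can only be moved \emph{out of} the $C$-slot \emph{into} the $D$-slots, never back. Consequently $\sC\downarrow T$ does not have common upper bounds in general.

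For a counterexample, take $X$ a point, $\cY=\cZ=\{X\}$, $\sC=M_2(\C)$, and the two objects given by the canonical isomorphisms $\sC\cong T(\C,M_2(\C),\C)$ and $\sC\cong T(\C,\C,M_2(\C))$. A morphism out of the first into some $(\hat c,\hat d_1,\hat d_2)$ forces $c_1,\hat c,c_2\cong\C$ by dimension count, hence $\hat d_1\cong M_2(\C)$. A morphism out of the second forces $\hat d_1\cong\C$. So your Step 2 (a third object receiving maps from both) cannot be carried out, and the conclusion of Step 3 is false.

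Step 1 is backwards for the same reason. There is no morphism from an arbitrary object to a ``standard'' one whose $C$-slot collects everything near $W$; such a morphism can only go the other way. Moreover, the candidate $T(\sA\otimes\tilde\sA\otimes\sD,\dots)$ is not an object of $\sC\downarrow T$ at all, since its target must be isomorphic to $\sC$. Your opening sentence speaks of cones, which is the correct direction, but Steps 1--3 all try to build cocones.

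What works, and what the paper does, is to use lower bounds; a nonempty preorder in which finite sets have lower bounds has contractible classifying space just as well. The paper fixes $Y\in\cY$, $Z\in\cZ$ (with $\sC$ supported on $Y\cup Z$ and local away from $Y\cap Z$) and an entourage $E$. It then considers the full subcategory of objects $\varphi:\sC\to\sT=\sA\otimes\sB^{\cY}\otimes\sB^{\cZ}$ such that $\varphi$ is $E$-controlled and $\sA,\sB^{\cY},\sB^{\cZ}$ are supported on $Y\cap Z$, $Y$, $Z$ respectively. These subcategories exhaust $\sC\downarrow T$ as a filtered union. The paper shows each has an initial object, namely the split object $\texttt{split}_{Y_E,Z_E}(\sC)$, whose $C$-slot $\sC|_{Y_E\cap Z_E}$ is your fattened $W$.

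The key verification runs as follows. By Prop.~\ref{PropImageOfTensorFactor}, $\varphi_*(\sC|_{Y_E\setminus Z_E})$ is a tensor factor of $\sT$ supported off $Z$, hence contained in $\sB^{\cY}$. By Prop.~\ref{PropNestedSequence}, it is then a tensor factor of $\sB^{\cY}$. Its complement $\sA^{\cY}$ lies in the image of $\sC|_{Z_E}$, so it is supported near $Y\cap Z$ and is an object of $\Az(\cY\Cap\cZ)$. This $\sA^{\cY}$ and its analogue $\sA^{\cZ}$ are exactly the $c_1,c_2$ of the required morphism in $P$. Your faithfulness observation then gives uniqueness.

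So the ingredients you name are the right ones. But they must be used to show that the split object is a common \emph{lower} bound, not to construct an upper bound.
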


\begin{proof}
Fix an object $\sC$ of $\Az_{\cY \Cap \cZ}(\cY \Cup \cZ)$.
A standard way to establish this is that $\sC \downarrow T$ admits an initial object.
However, this does not seem to be the case, so we will instead show the weaker statement that $\sC \downarrow T$ is a filtered union of subcategories, each of which do admit an initial object.
This suffices as then $\sC \downarrow T$ is a filtered union of contractible spaces, hence contractible.

As preparation, first notice that by the support conditions on $\sC$, there exist members $Y \in \cY$ and $Z \in \cZ$ such that $\sC$ is supported on $Y \cup Z$ and local away from $Y \cap Z$.
Hence $(\sC|_{Y \cap Z}, \sC|_{Y \setminus Z}, \sC|_{Z \setminus Y})$ is an object of $P$ and
\[
\sC \cong \sC|_{Y \cap Z} \otimes \sC|_{Y \setminus Z} \otimes \sC|_{Z \setminus Y} = T(\sC|_{Y \cap Z},  \sC|_{Y \setminus Z},  \sC|_{Z \setminus Y}),
\]
defines an object $\texttt{split}_{Y, Z}(\sC)$ of $\sC \downarrow T$.

For every entourage $E$ and any two members $Y \in \cY$, $Z \in \cZ$ such that $\sC$ is supported on $Y \cup Z$ and local away from $Y \cap Z$, we consider the full subcategory  $(\sC \downarrow T)_{Y, Z, E}$ of $\sC \downarrow T$ consisting of those objects of the form \eqref{ObjectCommaCategory} such that 
\begin{enumerate}[$\bullet$]
	\item $\varphi$ is $E$-controlled;
	\item $\sA$ is supported on $Y \cap Z$;
	\item $\sB^{\cY}$ is supported on $Y$;
	\item $\sB^{\cZ}$ is supported on $Z$.
\end{enumerate}

It is clear that $\sC \downarrow T$ is the filtered union of the subcategories $(\sC \downarrow T)_{Y, Z, E}$.
We claim that $\texttt{split}_{Y_E, Z_E}(\sC)$ is an initial object of $(\sC \downarrow T)_{Y, Z, E}$.
Since the functor $T$ is faithful and all net categories are groupoids, we see that there is at most one morphism between any two given objects, determined as the pre-image under $T$ of $\tilde{\varphi} \circ \varphi^{-1}$.
Therefore, it suffices to establish that any object of $(\sC \downarrow T)_{Y, Z, E}$ admits a morphism from $\texttt{split}_{Y_E, Z_E}(\sC)$; uniqueness is then automatic.

To show the claim, we take an arbitrary object 
\[
\varphi : \sC \longrightarrow T(\sA, \sB^{\cY}, \sB^{\cZ}) = \sA \otimes \sB^{\cY} \otimes \sB^{\cZ} =: \sT,
\]
of $(\sC \downarrow T)_{Y, Z, E}$.
Prop.~\ref{PropImageOfTensorFactor} implies that its image $\varphi_*(\sC|_{Y_E \setminus Z_E})$ is a tensor factor of $\sT$. 
Moreover, since $E$ is a control for $\varphi$, the net $\varphi_*( \sC|_{Y_E \setminus Z_E})$ is supported in $X \setminus Z$, which is disjoint from the supports of $\sA$ and $\sB^{\cZ}$.
Therefore
\[
\varphi_*(\sC|_{Y_E \setminus Z_E}) \subseteq \sB^{\cY} \subseteq \sT,
\]
where the first two are tensor factors in $\sT$.
From Prop.~\ref{PropNestedSequence}, we therefore get that $\varphi_*(\sC|_{Y_E \setminus Z_E})$ is also a tensor factor of $\sB^{\cY}$, so
\[
\sB^{\cY} = \varphi_*(\sC|_{Y_E \setminus Z_E}) \otimes \sA^{\cY} 
\]
for some Azumaya net $\sA^{\cY}$ supported on $Y$.
Since $\sA^{\cY}$ is the commutant of $\varphi_*(\sC|_{Y_E \setminus Z_E})$ in $\sB^{\cY}$, it must lie in the image of the commutant $\sC|_{Z_E}$ of $\sC|_{Y_E \setminus Z_E}$ in $\sC$, hence because $\varphi$ is $E$-controlled, we have that $\sA^{\cY}$ is supported on $Z_{E \circ E}$.
In total, $\sA^{\cY}$ is therefore supported on $\cY \Cap \cZ$.
Similarly, we obtain another Azumaya net $\sA^{\cZ}$ supported on $\cY \Cap \cZ$ such that
\[
\sB^{\cZ} = \varphi_*(\sC|_{Z_E \setminus Y_E}) \otimes \sA^{\cZ}.
\]
In total, we get isomorphisms of nets
\[
\psi: \sC|_{Y_E \cap Z_E} \longrightarrow \sA \otimes \sA^{\cY} \otimes \sA^{\cZ}, \qquad \text{and} \qquad 
\begin{aligned}
\psi^{\cY}: \sC|_{Y_E \setminus Z_E} \otimes \sA^{\cY} &\longrightarrow \sB^{\cY}, \\ 
\psi^{\cZ}: \sC|_{Z_E \setminus Y_E} \otimes \sA^{\cZ} &\longrightarrow \sB^{\cZ}, 
\end{aligned}
\]
which define a morphism in $P$ and further a map in $(\sC\downarrow T)_{Y, Z, E}$ from $\texttt{split}_{Y_E, Z_E}(\sC)$ to our given $\varphi : \sC \to T(\sA, \sB^{\cY}, \sB^{\cZ})$.
This proves the claim and finishes the proof.
\end{proof}

Thm.~\ref{ThmFlasqueContractible} generalizes as follows.

\begin{lemma}
\label{LemmaFlasqueExtended}
If $\cY$ is flasque, then $K(\Az_{\cY \Cap \cZ}(\cY))$ is contractible. 
\end{lemma}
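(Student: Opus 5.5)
We adapt the Eilenberg swindle from the proof of Thm.~\ref{ThmFlasqueContractible}. First we fix what flasqueness of a big family means: following \cite{BunkeEngel2020}, $\cY$ is flasque if it contains a cofinal subfamily of members $Y$ that are flasque, with witnessing maps $f_Y : Y \to Y$. We may assume these maps are compatible, i.e.\ $f_{Y}$ restricts to $f_{Y'}$ on $Y'$ when $Y' \subseteq Y$. By \eqref{NetsOnFamiliesAsColimits}, $\Az_{\cY \Cap \cZ}(\cY)$ is the filtered colimit of the categories $\Az_{\cY\Cap\cZ}(\{Y\})$ over such $Y$, or equivalently of $\Az_{\{Y\}\Cap\cZ}(Y)$ after using Corollary~\ref{CorollaryEquivalenceBigFamily}. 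Since $K$ preserves filtered colimits, it suffices to show that each $K(\Az_{\{Y\} \Cap \cZ}(Y))$ is contractible. Alternatively, one can work with a single endofunctor built from the compatible $f_Y$.

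For a single flasque $Y$ with witnessing map $f$, we use the functor $S(\sA) = \sA \otimes f_*\sA \otimes f^2_*\sA \otimes \cdots$ from Prop.~\ref{PropFlasque}, together with the natural isomorphism $S \cong \id \otimes S$. The key step is to check that $S$ preserves the subcategory $\Az_{\cW}(Y)$, where $\cW = \{Y\} \Cap \cZ$ is restricted to $Y$. Suppose $\sA = \sA|_W \otimes \sA|_{Y \setminus W}$, with $\sA|_W$ Azumaya and $W \in \cW$. Then
\[
f^n_*\sA \cong f^n_*(\sA|_W) \otimes f^n_*(\sA|_{Y\setminus W}),
\]
where the second factor is a local matrix net and the first is supported on $f^n(W)$. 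We therefore need $\bigcup_n f^n(W)$ to lie in a member of $\cZ$, up to fattening. Here we use that $f$ is close to the identity, so each $f^n(W) \subseteq W_{E_n}$. The problem is that the entourages $E_n$ need not be uniformly bounded; the uniform control in property (3) of flasqueness concerns only $(f^n\times f^n)(E)$. This is the main obstacle.

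To overcome it, we would use the convention of \cite{BunkeEngel2020} that for an ``$f$-compatible'' pair one has $f(Z \cap Y) \subseteq Z$ for members $Z \in \cZ$ that are cofinal in $\cZ$, as occurs in the complementary-pair setup of the Mayer--Vietoris argument. Failing that, we would replace $\sA|_W$ by its image along a coarse map to $W_E$ using Prop.~\ref{PropCloseness}. Granting $f$-invariance of $\cZ$, the infinite tensor product $S(\sA)$ is local away from $\bigcup_n f^n(W) \in \cY\Cap\cZ$, and it is Azumaya because tensor complements can also be formed termwise via $S(\sA')$. Morphisms are controlled by $\bigcup_n (f^n\times f^n)(E)$, exactly as in Prop.~\ref{PropFlasque}.

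Finally, the natural isomorphism $S \cong \id\otimes S$ restricts to the full subcategory $\Az_{\cW}(Y)$. On $K$-theory it gives $S_* \simeq \id + S_*$, hence $\id \simeq 0$, so $K(\Az_{\cW}(Y))\simeq 0$. Passing to the filtered colimit then gives contractibility of $K(\Az_{\cY\Cap\cZ}(\cY))$.
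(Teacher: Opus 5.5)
There is a genuine gap at the step you yourself flag as the main obstacle. You need the swindle functor $S$ to preserve $\Az_{\cY\Cap\cZ}(\cY)$, and the $f$-invariance of $\cZ$ that you propose to grant is not available. It is not a hypothesis of the lemma, which allows an arbitrary big family $\cZ$. It is also false in the case where the lemma is actually used.

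In the proof of Thm.~\ref{ThmLoopRn} one has $\cY=\{X\otimes\R_{\le 0}\}$ and $\cZ=\{X\otimes\R_{\ge 0}\}$. Take $X$ a point. Then for large members, every $Z\cap Y$ is a nonempty bounded set. A flasqueness witness $f : Y\to Y$ with $f(Z\cap Y)\subseteq Z$ would give $f^n(Z\cap Y)\subseteq Z\cap Y$ for all $n$, contradicting condition (2) of flasqueness.

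Without that invariance, the non-local pieces $f^n_*(\sA|_W)$ of $S(\sA)$ sit on $\bigcup_n f^n(W)$. In this situation that set is unbounded, so it lies in no member of $\cY\Cap\cZ$, and there is no reason for $S(\sA)$ to be local away from any such member. Your fallback of moving $\sA|_W$ into some $W_E$ by a map close to the identity does not help, because the problem is the iterates $f^n$, which are not uniformly close to the identity. (Your claim that the witnesses $f_Y$ can be chosen compatibly is also unjustified, though you do not need it for the reduction to a single flasque member.)

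The missing idea, which is what the paper does, is to apply the swindle only to \emph{local} nets and to handle $K_0$ separately from the higher groups.

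\begin{itemize}
\item \emph{Degrees $n\ge 1$.} Lemma~\ref{LemmaHigherHomotopyGroups} identifies $K_n(\Az_{\cY\Cap\cZ}(\cY))$ with $K_n(\Loc(\cY))$. This vanishes because $S$ does preserve local matrix nets on a flasque member (Prop.~\ref{PropFlasque} and the argument of Thm.~\ref{ThmFlasqueContractible}), and $K$ commutes with filtered colimits.
\item \emph{Degree $0$.} Given $\sA$ supported on a flasque member $Y$, choose $\sA'$ with $\sA\otimes\sA'$ local. Then $S(\sA\otimes\sA')$ is a local matrix net on $Y$, hence an object of $\Az_{\cY\Cap\cZ}(\cY)$. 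Shifting the copies of $\sA$ gives an isomorphism $S(\sA\otimes\sA')\cong\sA\otimes S(\sA\otimes\sA')$. Both ends lie in the full subcategory, even though the intermediate nets $S(\sA)$ and $S(\sA')$ need not. So this is an isomorphism there, and $[\sA]=0$.
\end{itemize}

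This object-level swindle on $K_0$ is weaker than the functor-level swindle you were aiming for. That is exactly why the comparison with $\Loc(\cY)$ is needed in positive degrees.
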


\begin{proof}
After applying Lemma~\ref{LemmaHigherHomotopyGroups}, it remains to show that $K_0(\Az_{\cY \Cap \cZ}(\cY)) = 0$.
To this end, consider the functor $S$ from Prop.~\ref{PropFlasque}.
If $\sA$ is an object of $\Az_{\cY \Cap \cZ}(\cY)$ supported on some flasque member $Y$ of $\cY$, we may choose an object $\sA'$ in $\Az_{\cY \Cap \cZ}(\cY)$ such that $\sA \otimes \sA'$ is local. 
Then if $f : Y \to Y$ is the map implementing flasqueness, then 
\[
S(\sA \otimes \sA') = \sA \otimes \sA' \otimes f_* \sA \otimes f_* \sA' \otimes f^2_* \sA \otimes f_*^2 \sA' \otimes \cdots.
\]
is a local matrix net on $Y$, hence a well-defined object of $\Az_{\cY \Cap \cZ}(\cY)$.
We then have an isomorphism of nets
\[
S(\sA \otimes \sA') \cong S(\sA) \otimes S(\sA') \cong \sA \otimes S(\sA) \otimes S(\sA')  \cong \sA \otimes S(\sA \otimes \sA'),
\]
given by shifting the copies of $\sA$.
In $K_0$, this implies 
\[
[S(\sA \otimes \sA')] = [\sA]  + [S(\sA \otimes \sA')] \qquad
\Longrightarrow \qquad [\sA] = 0.
\]
\end{proof}

\begin{corollary}
\label{CorollaryFlasque}
If $\cY$ and $\cZ$ are flasque big families on a bornological coarse space, then we have a canonical homotopy equivalence
\[
K(\Az(\cY \Cap \cZ)) \simeq \Omega K(\Az_{\cY \Cap \cZ}(\cY \Cup \cZ))
\]
\end{corollary}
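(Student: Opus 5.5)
The plan is to combine Prop.~\ref{PropSupportedVersion} with Lemma~\ref{LemmaFlasqueExtended}. By Prop.~\ref{PropSupportedVersion}, the square
\[
\begin{tikzcd}
  K(\Az(\cY \Cap \cZ)) \ar[r]
  \ar[d]
  &
  K(\Az_{\cY \Cap \cZ}(\cY))
  \ar[d]
  \\
  K(\Az_{\cY \Cap \cZ}(\cZ)) \ar[r] &
  K(\Az_{\cY \Cap \cZ}(\cY \Cup \cZ))
\end{tikzcd}
\]
is cartesian in spectra, and hence also cocartesian, since spectra form a stable $\infty$-category. Applying Lemma~\ref{LemmaFlasqueExtended} to $\cY$ shows that the top right corner is contractible. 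Applying it to $\cZ$, with the roles of $\cY$ and $\cZ$ swapped (note that $\cY \Cap \cZ = \cZ \Cap \cY$, so the lemma applies verbatim), shows that the bottom left corner is contractible.

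A cartesian square whose off-diagonal corners vanish exhibits the top left corner as the pullback $0 \times_{K(\Az_{\cY \Cap \cZ}(\cY \Cup \cZ))} 0$. By definition, this pullback is $\Omega K(\Az_{\cY \Cap \cZ}(\cY \Cup \cZ))$, which gives the claimed equivalence
\[
K(\Az(\cY \Cap \cZ)) \simeq \Omega K(\Az_{\cY \Cap \cZ}(\cY \Cup \cZ)).
\]
This equivalence is canonical: it is induced by the square, together with the canonical nullhomotopies of the two composites through the contractible corners.

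The only point to check is that "contractible" in Lemma~\ref{LemmaFlasqueExtended} means that the spectrum is zero, not merely that its infinite loop space is connected. This holds because these $K$-theory spectra are connective and the lemma kills all homotopy groups: it handles $\pi_0$ directly and the higher groups via Lemma~\ref{LemmaHigherHomotopyGroups}.

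A subtlety is that $\Omega$ here is the loop functor in spectra, not in connective spectra. If we insist on working in connective spectra, the pullback there is the connective cover $\tau_{\geq 0}\Omega(-)$, which is still a valid equivalence statement but a different one. In spectra no such issue arises, since Prop.~\ref{PropSupportedVersion} is phrased as cartesian in spectra. I expect no real obstacle beyond this bookkeeping; all the work is already contained in the proposition and the lemma.
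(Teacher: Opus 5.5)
Your proposal is correct and follows the paper's proof. Lemma~\ref{LemmaFlasqueExtended}, applied to $\cY$ and symmetrically to $\cZ$, kills the two off-diagonal corners of the square in Prop.~\ref{PropSupportedVersion}. Stability of spectra then identifies $K(\Az(\cY \Cap \cZ))$ with $\Omega K(\Az_{\cY \Cap \cZ}(\cY \Cup \cZ))$. Your extra bookkeeping, that contractible means the zero spectrum and that $\Omega$ is taken in spectra, only makes explicit what the paper leaves implicit.
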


\begin{proof}
By Lemma~\ref{LemmaFlasqueExtended}, the flasqueness assumption on $\cY$ and $\cZ$ imply that the pushout diagram from Prop.~\ref{PropSupportedVersion} becomes 
\[
\begin{tikzcd}
  K(\Az(\cY \Cap \cZ)) \ar[r]
  \ar[d]
  & 
  *
  \ar[d]
  \\
  * \ar[r] &
  K(\Az_{\cY \Cap \cZ}(\cY \Cup \cZ)),
\end{tikzcd}
\]
which establishes $K(\Az(\cY \Cap \cZ))$ as the loop space of $K(\Az_{\cY \Cap \cZ}(\cY \Cup \cZ))$ (as the $\infty$-category of spectra is stable, pushout and pullback diagrams coincide). 
\end{proof}

\begin{proof}[of Thm.~\ref{ThmLoopRn}]
Consider the big families $\cY := \{X \otimes \R_{\leq 0}\}$ and $\cZ := \{X \otimes \R_{\geq 0}\}$ in $X \otimes \R$.
Their elementwise intersection is the big family generated by $X \otimes \{0\}$, hence by Corollary~\ref{CorollaryEquivalenceBigFamily} and the fact that the $K$-theory functor commutes with filtered colimits, the canonical map
\[
K(\Az(X)) \cong K(\Az(X \otimes \{0\})) \longrightarrow K(\Az(\cY \Cap \cZ))
\]
is a homotopy equivalence.
Since $X \otimes \R_{\leq 0}$ and $X \otimes \R_{\geq 0}$ are flasque, Corollary~\ref{CorollaryFlasque} yields a canonical map
\[
K(\Az(X))\simeq K(\Az(\cY \Cap \cZ)) \simeq \Omega K(\Az_{\cY \Cap \cZ}(\cY \Cup \cZ)) \longrightarrow \Omega K(\Az(X \otimes \R)).
\]
By Lemma~\ref{LemmaHigherHomotopyGroups}, it induces an isomorphism on homotopy groups of nonnegative degree.
\end{proof}

\subsection{Definition of the coarse homology theory}

The notion of group-valued coarse homology theories was first axiomatically introduced by Mitchener \cite{Mitchener2002CoarseHomology}.
The modern point of view is that a group-valued homology theory should be the ``shadow'' of a functor valued in spectra (or some other stable $\infty$-category), which yields a group-valued functor by taking homotopy groups. 
The following definition is equivalent to that of Bunke--Engel \cite[Definition 4.22]{BunkeEngel2020}.

\begin{definition}
A functor
\[
F : \BornCoarse \longrightarrow \Spectra
\]
is a \emph{coarse homology theory} if it satisfies the following axioms:
\begin{enumerate}[(1)]
\item 
$F$ is \emph{coarsely invariant}, i.e., whenever $f, g : X \to X'$ are close, then $F(f) \simeq F(g)$.
\item
$F$ \emph{vanishes on flasques}, i.e., whenever $X$ is flasque, then $F(X) \simeq 0$.
\item
$F$ satisfies the \emph{Mayer-Vietoris axiom}:
Whenever $\cY$ and $\cZ$ are two big families in a coarse space $X$, then we have a pushout square
\[
\begin{tikzcd}
	F(\cY \Cap \cZ) \ar[d] \ar[r] & F(\cY) \ar[d]
	\\
	F(\cZ) \ar[r] & F(\cY \Cup \cZ).
\end{tikzcd}
\]
Here for any big family in $X$, we define $F(\cY) := \colim_{Y \in \cY} F(Y)$.
\item
$F$ is \emph{$u$-continuous}.
This means that if for an entourage $E$ on a bornological coarse space $X$, $X_E$ denotes the space $X$ with the same bornology but equipped with the smallest coarse structure containing $E$, then
\[
F(X) = \colim_{E \in \cC} F(X_E).
\]
\end{enumerate}
\end{definition}

Taking $F$ to be one of the functors $K\circ \Az$ of $K \circ \Loc$, we have already checked axioms (1) and (2) and axiom (4) is also easy to check.
However, neither functor satisfies the Mayer-Vietoris axiom.
The problem is that all the $K$-theory spectra are connective, which generates a problem at the degree zero level.
Fortunately, non-connective deloopings are provided by Thm.~\ref{ThmLoopRn}.

\begin{definition}
For a bornological coarse space $X$, we define 
\[
Q(X) := \colim_{n \to \infty} \Omega^{n+1} K\bigl(\Az(X \otimes \R^n)\bigr),
\]
where the colimit is taken in the $\infty$-category of spectra along the maps provided by Thm.~\ref{ThmLoopRn}.
Varying $X$, this defines a functor
\[
Q : \BornCoarse \longrightarrow \Spectra.
\]
\end{definition}

Since all the spectrum maps from Thm.~\ref{ThmLoopRn} induce homotopy equivalences in positive degree, they particularly induce homotopy equivalences on the corresponding infinite loop spaces, hence the spectrum $Q(X)$ may equivalently be described as the $\Omega$-spectrum $E_0, E_1, \dots$, where the $n$-th space $E_n$ is given by 
\[
E_n = \Omega^{\infty+1} K\bigl(\Az(X \otimes \R^n)\bigr).
\]
Its homotopy groups are given by the formula
\[
Q_n(X) := \pi_n(Q(X)) = K_{n+m+1}\bigl(\Az(X \otimes \R^m)\bigr), \qquad \text{for}~~m \geq -n-1.
\]
The definition is made so that for any bornological coarse space $X$, we have
\[
Q_0(X) = K_1(\Az(X)) \cong \QCA(X),
\]
using the isomorphism from Thm.~\ref{ThmK1QCA}.
Our main result is now the following.

\begin{theorem}
The functor $Q$ is a coarse homology theory.
\end{theorem}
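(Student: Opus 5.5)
We check the four axioms for $Q$ one at a time, reducing each to the corresponding statement for the connective functors $X \mapsto K(\Az(X \otimes \R^n))$. Two facts about the colimit defining $Q$ make this work. First, sequential colimits in spectra commute with finite limits and colimits, so pushout squares are preserved. Second, the map in Thm.~\ref{ThmLoopRn} is an isomorphism on $\pi_k$ for $k \geq 0$, so in any fixed degree the colimit stabilizes. For every $k$ we therefore have $\pi_k Q(X) \cong K_{k+n+1}(\Az(X\otimes\R^n))$ as soon as $k+n+1\geq 1$. Consequently, to show that a map or square built from $Q$ is an equivalence or a pushout, it is enough to control, for each $n$, the homotopy groups of the relevant terms in degrees $\geq 1$.

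\emph{Coarse invariance.} If $f,g$ are close, then so are $f\times\id$ and $g\times\id$ on $X\otimes\R^n$. Prop.~\ref{PropCloseness} gives a natural isomorphism $(f\times\id)_*\cong(g\times\id)_*$ of symmetric monoidal functors, so the induced maps on $K$-theory are homotopic. We also need these homotopies to be compatible with the structure maps of Thm.~\ref{ThmLoopRn}. This holds because those structure maps are natural in $X$: the Thomason square and the big families $X\otimes\R_{\leq0}$, $X\otimes\R_{\geq0}$ are preserved by $f\times\id_\R$.

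\emph{Flasques.} If $X$ is flasque via $f$, then $X\otimes\R^n$ is flasque via $f\times\id$. All three flasqueness conditions transfer because the bornology on $X\otimes\R^n$ is the product one: a bounded set has bounded projection to $X$, which is eventually missed by $f^m(X)$. Thm.~\ref{ThmFlasqueContractible} then gives $K(\Az(X\otimes\R^n))\simeq0$ for all $n$, hence $Q(X)\simeq0$.

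\emph{$u$-continuity.} Entourages of $X\otimes\R^n$ are contained in products $E\times F$, and $\Az(X)$ is the filtered union of the $\Az$ of the coarsenings: a net, its complement, and the isomorphism to a local matrix net each involve only finitely many controls. This identifies $\Az(X\otimes\R^n)$ with the filtered colimit over $E$ of $\Az(X_E\otimes\R^n)$. Since $K$ commutes with filtered colimits, and $\Omega$ and sequential colimits commute with filtered colimits in spectra, the claim follows.

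\emph{Mayer--Vietoris.} This is the main step. Fix big families $\cY,\cZ$ on $X$ and set $\cY_n=\cY\otimes\R^n$, the big family generated by $Y\times\R^n$, and similarly $\cZ_n$. Prop.~\ref{PropSupportedVersion} gives a cartesian square
\[
K(\Az(\cY_n\Cap\cZ_n)) \to K(\Az_{\cY_n\Cap\cZ_n}(\cY_n)),\ K(\Az_{\cY_n\Cap\cZ_n}(\cZ_n)) \to K(\Az_{\cY_n\Cap\cZ_n}(\cY_n\Cup\cZ_n)).
\]
By Lemma~\ref{LemmaHigherHomotopyGroups}, each $\Az_{\cW}(\cV)$ has the same $K_k$ for $k\geq1$ as $\Az(\cV)$, and the comparison maps are compatible with the structure maps. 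By \eqref{NetsOnFamiliesAsColimits}, $K(\Az(\cV))=\colim_{V\in\cV}K(\Az(V))$. So after applying $\Omega^{n+1}$, the square agrees in all homotopy degrees $k\geq -n$ with the square of $\Omega^{n+1}K(\Az(-\otimes\R^n))$ evaluated on the four big families. Hence, after taking the colimit over $n$, we obtain a cartesian square of $Q$'s, and therefore a pushout in the stable category.

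The main obstacle is bookkeeping of two kinds. The first is naturality: the maps of Thm.~\ref{ThmLoopRn} must commute with the inclusions of big families, and $\Az_{\cW}$ must be compatible with the delooping construction. The second is the identification $Q(\cY)=\colim_{Y}Q(Y)$, which needs colimits over $Y\in\cY$ to commute with the $n$-colimit and with $\Omega$. I would handle this by proving all comparisons on homotopy groups degreewise, using that homotopy groups commute with filtered colimits.
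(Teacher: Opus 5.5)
Your proposal is correct and follows the paper's proof. Each axiom is checked at the level of $K(\Az(X\otimes\R^n))$ using the same ingredients: Prop.~\ref{PropCloseness} for coarse invariance, Thm.~\ref{ThmFlasqueContractible} for flasques, Prop.~\ref{PropSupportedVersion} with Lemma~\ref{LemmaHigherHomotopyGroups} for Mayer--Vietoris, and $K$ commuting with the filtered union over coarsenings for $u$-continuity. Your ``agreement in degrees $\geq -n$, then take the colimit'' argument is the spectrum-level form of the paper's levelwise argument with the spaces $\Omega^{\infty+1}K(\Az(X\otimes\R^n))$, and you also fill in details the paper leaves implicit, namely flasqueness of $X\otimes\R^n$ and the fact that the Azumaya condition on objects involves a control.
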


\begin{proof}
	Coarse invariance follows from Prop.~\ref{PropCloseness} and the properties of the $K$-theory functor.
	That $Q$ vanishes on flasques follows from Thm.~\ref{ThmFlasqueContractible}.
Both these properties hold already before taking the colimit.
	
Since $Q(X)$ is represented by a $\Omega$-spectrum with spaces $\Omega^{\infty +n+1} K(\Az(X \otimes \R^n))$, it suffices that at each level the corresponding square of spaces is a homotopy pushout.
Precisely, we must check that for any pair $\cY$, $\cZ$ of big families in a bornological coarse space $X$, we have homotopy pushouts
\[
\begin{tikzcd}
	\Omega^{\infty+1} K\bigl(\Az((\cY \Cap \cZ) \otimes \R^n)\bigr)
	\ar[r]
	\ar[d]
	&
	\Omega^{\infty+1} K\bigl(\Az(\cY \otimes \R^n)\bigr)
	\ar[d]
	\\
	\Omega^{\infty+1} K\bigl(\Az(\cZ \otimes \R^n)\bigr)
	\ar[r]
	&
	\Omega^{\infty+1} K\bigl(\Az((\cY \Cup \cZ) \otimes \R^n)\bigr)
\end{tikzcd}
\]
of spaces (here $\cY \otimes \R^n$ etc.\ denotes the big family in $X \otimes \R^n$ generated by the sets $Y \times \R^n$ for $Y \in \cY$).
But this follows from Prop.~\ref{PropSupportedVersion} together with Lemma~\ref{LemmaHigherHomotopyGroups}.

Finally, to see $u$-continuity, observe that $\Az(X)$ is the colimit of the subcategories $\Az_{\langle E\rangle}(X)$ containing all objects and those morphisms that are controlled by an entourage in the coarse structure $\langle E\rangle$ generated by $E$.
The statement therefore follows because the $K$-theory functor commutes with colimits.
\end{proof}

\begin{remark}
Similar to what is done in \cite{QCAspace}, when defining nets, we may choose a commutative ring $R$ and replace the category of finite-dimensional $C^*$-algebras and injective $*$-homomorphisms by the category of $R$-algebras and injective homomorphisms. 
Matrix nets are then required to assign algebras isomorphic to $M_n(R)$ to bounded subsets of a bornological coarse space $X$ and Azumaya nets may be defined in a similar way, yielding a category $\Az_R(X)$.
The further constructions go through in a similar way, and we obtain a homology theory $Q(X, R)$ with $Q_{-1}(X, R) = K_0(\Az_R(X))$.

However the identification with quantum cellular automata is not quite as straight forward, as we used specific results on the projective unitary group in the proof of Thm.~\ref{ThmK1QCA}, compare \cite{QCAspace}.
\end{remark}

\subsection{QCA and Azumaya nets}

As coarse spaces, $\Z$ and $\R$ are equivalent, hence we have canonical homotopy equivalences 
\[
\Omega K(\Az(X \otimes \Z)) \simeq \Omega K(\Az(X \otimes \R))\simeq  K(\Az(X)).
\]
In view of Thm.~\ref{ThmK1QCA}, taking $\pi_0$, we obtain the following result.

\begin{theorem}
\label{ThmQCAAz}
For any bornological coarse space $X$, we have
\[
\QCA(X \otimes \Z) \cong K_1(\Az(X \otimes \Z)) \cong K_0(\Az(X)).
\]
In particular, for $X = \Z^{n-1}$, we obtain
\[
\QCA(\Z^n) \cong K_0(\Az(\Z^{n-1})).
\]
\end{theorem}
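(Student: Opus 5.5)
The plan is to chain three identifications: Thm.~\ref{ThmK1QCA}, coarse invariance, and Thm.~\ref{ThmLoopRn}.

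\textbf{Step 1.} Apply Thm.~\ref{ThmK1QCA} to the bornological coarse space $X \otimes \Z$. This gives $\QCA(X \otimes \Z) \cong K_1(\Az(X \otimes \Z))$ directly.

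\textbf{Step 2.} Compare $X \otimes \Z$ with $X \otimes \R$. The map $\id_X \times \iota : X \otimes \Z \to X \otimes \R$, with $\iota$ the inclusion, is a coarse equivalence. Its inverse up to closeness is $\id_X \times r$, where $r : \R \to \Z$ is rounding down. Both maps are proper for the product bornology $\cB \times \cB'$, since preimages of products of bounded sets are contained in products of bounded sets. Both are controlled for the product coarse structure. Each composite is close to the identity via an entourage of the form $\Delta_X \times \{|s-t| \leq 1\}$.

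By Prop.~\ref{PropCloseness}, together with functoriality $(g\circ f)_* \cong g_* f_*$, the pushforward $(\id_X \times \iota)_*$ is then an equivalence of symmetric monoidal groupoids $\Az(X \otimes \Z) \simeq \Az(X \otimes \R)$. Applying $K$ gives an equivalence of spectra, hence an isomorphism $K_1(\Az(X \otimes \Z)) \cong K_1(\Az(X \otimes \R))$.

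\textbf{Step 3.} Apply Thm.~\ref{ThmLoopRn}. The map $K(\Az(X)) \to \Omega K(\Az(X \otimes \R))$ is an isomorphism on homotopy groups in nonnegative degrees. In degree $0$ this reads
\[
K_0(\Az(X)) \cong \pi_0 \Omega K(\Az(X \otimes \R)) = K_1(\Az(X \otimes \R)).
\]
Composing Steps 1--3 gives the first claim. For the special case, take $X = \Z^{n-1}$ and use $\Z^{n-1} \otimes \Z \cong \Z^n$ from Example~\ref{TensorProductBornCoarse}.

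The only point needing care is that Thm.~\ref{ThmLoopRn} is genuinely an isomorphism in degree $0$. This is exactly the $\pi_0$ content of Corollary~\ref{CorollaryFlasque}, namely $K_0(\Az(\cY \Cap \cZ)) \cong K_1(\Az_{\cY \Cap \cZ}(\cY \Cup \cZ))$. To pass from $\Az_{\cY\Cap\cZ}(X\otimes\R)$ to $\Az(X \otimes \R)$, one uses only the degree-$1$ case of Lemma~\ref{LemmaHigherHomotopyGroups}, which is available.

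Everything else is bookkeeping. Naturality in $X$ follows because every map involved is induced by pushforward along coarse maps or by the natural constructions of \S\ref{SectionMayerVietoris}.
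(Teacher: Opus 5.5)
Your proof is correct and follows the paper's own argument: it uses the coarse equivalence $X \otimes \Z \simeq X \otimes \R$ via Prop.~\ref{PropCloseness}, then the degree-zero case of Thm.~\ref{ThmLoopRn}, then Thm.~\ref{ThmK1QCA}, and takes $\pi_0$. Your version is slightly more careful than the paper's in two respects: you verify the coarse equivalence explicitly, and you invoke only the isomorphism in nonnegative degrees from Thm.~\ref{ThmLoopRn}, whereas the paper loosely calls $\Omega K(\Az(X \otimes \R)) \simeq K(\Az(X))$ a homotopy equivalence.
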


Thm.~\ref{ThmQCAAz} shows that the classification of quantum cellular automata in any dimension is equivalent to the classification of Azumaya nets one dimension lower.
This classification problem may not be much simpler, but at least in one case, it is easy to do:

\begin{example}
In the case that $X$ is a point, we have $\Az(X) \cong \Loc(X)$ and since any matrix algebra is classified, up to isomorphism, by its dimension and we have $M_n(\C) \otimes M_m(\C) \cong M_{nm}(\C)$, the monoid of isomorphism classes of objects is $(\N , \cdot)$,  natural numbers with multiplication.
The group completion yields therefore
\[
K_0(\Az(*)) \cong K_0(\Loc(*)) \cong \Q_{>0}
\]
and the isomorphism above is given by the GNVW index, see \cite{GrossNesmeVogtsWerner2012}.
\end{example}

\begin{remark}
The isomorphism from Thm.~\ref{ThmQCAAz} is just the Mayer--Vietoris boundary
\[
K_1(\Az(X \otimes \Z)) \longrightarrow K_0(\Az(X))
\]
 of the decomposition of $X \otimes \Z$ into the big families generated by $X \otimes \Z_{\geq 0}$ respectively $X \otimes \Z_{\leq 0}$.
An inverse to this map map be given as follows.
 Take an Azumaya net $\sA$ on $X$.
 By Corollary~\ref{CorollaryAzumayaSubnet}, we may realize $\sA$ as a subnet of a local matrix net $\sB \cong \sA \otimes \sA'$.
Let $\hat{\sB}$ be the local net on $X \otimes \Z$ obtained by placing a copy of $\sB$ on each point of $\Z$.
Then the image of $[\sA] \in K_0(\Az(X))$ under the above isomorphism is the QCA on $X \otimes \Z$ that acts as shift on the $\sA$ factors of $\hat{\sB}$ in the $\Z$ direction and leaves the $\sA'$ factors invariant.
\[
\begin{tikzpicture}
\node at (2,0) {$\sA$};
\node at (2,0.5) {$\otimes$};
\node at (2.1,1.05) {$\sA'$};
\node at (4,0) {$\sA$};
\node at (4,0.5) {$\otimes$};
\node at (4.1,1.05) {$\sA'$};
\node at (6,0) {$\sA$};
\node at (6,0.5) {$\otimes$};
\node at (6.1,1.05) {$\sA'$};
\node at (8,0) {$\sA$};
\node at (8,0.5) {$\otimes$};
\node at (8.1,1.05) {$\sA'$};
\node at (10,0) {$\sA$};
\node at (10,0.5) {$\otimes$};
\node at (10.1,1.05) {$\sA'$};
\node at (12,0.5) {$\dots$};
\node at (0.6,0.5) {$\dots$};
\draw[->] (2.5,0) -- (3.5,0);
\draw[->] (4.5,0) -- (5.5,0);
\draw[->] (6.5,0) -- (7.5,0);
\draw[->] (8.5,0) -- (9.5,0);
\end{tikzpicture}
\]
\end{remark}

One has the following general statement:

\begin{lemma}
Inclusion of categories induces an injective group homomorphism
\[
K_0(\Loc(X)) \longrightarrow K_0(\Az(X)).
\]
\end{lemma}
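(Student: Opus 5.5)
Since $\Loc(X) \subseteq \Az(X)$ is monoidally cofinal, I would argue purely at the level of commutative monoids, using the standard description of the Grothendieck group. Every element of $K_0(\Loc(X))$ can be written as a formal difference $[\sA] - [\sB]$ of local matrix nets. Its image vanishes in $K_0(\Az(X))$ exactly when there is an Azumaya net $\sC$ with
\[
\sA \otimes \sC \cong \sB \otimes \sC
\]
in $\Az(X)$, that is, by a controlled isomorphism of nets.

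The key step is to replace the Azumaya net $\sC$ by a local one. By definition of an Azumaya net there exist a net $\sC'$ and a local matrix net $\sD$ with $\sC \otimes \sC' \cong \sD$. Tensoring the isomorphism above with $\sC'$, and using the associativity and symmetry isomorphisms of the tensor product of nets (which are local, hence controlled), I obtain
\[
\sA \otimes \sD \cong \sA \otimes \sC \otimes \sC' \cong \sB \otimes \sC \otimes \sC' \cong \sB \otimes \sD.
\]
All maps here are isomorphisms of nets between local matrix nets. Since $\Loc(X)$ is a full subgroupoid of $\Net(X)$, this is an isomorphism in $\Loc(X)$, and therefore $[\sA] = [\sB]$ already in $K_0(\Loc(X))$. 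This proves injectivity. That the map is a group homomorphism is immediate, because the inclusion is symmetric monoidal.

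The only point needing care, which I expect to be the main (though mild) obstacle, is bookkeeping. First, the Grothendieck-group relation in $K_0(\Az(X))$ may involve an arbitrary stabilizing object, and one must check it can be taken to be a single Azumaya net $\sC$. This holds because $\Az(X)$ is closed under $\otimes$: it contains $\Loc(X)$, and a tensor product of Azumaya nets is Azumaya, since its complement is the tensor product of the complements and a tensor product of local matrix nets is again a local matrix net. Second, one must verify that tensoring a controlled isomorphism with $\id_{\sC'}$ stays controlled, with the same entourage, which is clear from the definition of the tensor product of nets.
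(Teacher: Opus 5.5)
Your proof is correct and follows the paper's argument: a relation $\sA \otimes \sC \cong \sB \otimes \sC$ with $\sC$ Azumaya is tensored with a complement $\sC'$ of $\sC$. This turns the stabilizing object into a local matrix net, so the relation already holds in $K_0(\Loc(X))$.
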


\begin{proof}
Since the inclusion functor is fully faithful, the monoid $\pi_0(\Loc(X))$ of isomorphism classes of objects is a submonoid of $\pi_0(\Az(X))$.

Generally, if $M$ is a commutative monoid, two elements $m_1, m_2 \in M$ yield the same element in the group completion $K_0(M)$ if there exists $n \in M$ such that $m_1 + n = m_2 + n$.
Hence if $\sA_1$ and $\sA_2$ are two local matrix nets that define the same element of $K_0(\Az(X))$, then there exists an Azumaya net $\sC$ such that $\sA_1 \otimes \sC \cong \sA_2 \otimes \sC$.
Therefore also $\sA_1 \otimes \sC \otimes \sC' \cong \sA_2 \otimes \sC \otimes \sC'$ if $\sC'$ is another Azumaya net such that $\sC \otimes \sC'$ is isomorphic to a local matrix net. 
But this implies that $\sA_1$ and $\sA_2$ already define the same element in $K_0(\Loc(X))$.
\end{proof}

It was shown in \cite[Thm.~D]{QCAspace} that one has a canonical isomorphism
\[
K_0(\Loc(X)) \cong H\cX_0(X, \Q_{>0}),
\]
where $H\cX_n(X)$ denotes the coarse homology groups valued in the abelian group $\Q_{>0}$ (see \cite[\S6.3]{BunkeEngel2020}) and $\Z^{\mathbb{P}}$ denotes the set of maps from the set $\mathbb{P}$ of all prime numbers to $\Z$.
The group $H\cX_0(X, \Z^{\mathbb{P}})$ is just a quotient of the group of functions $X \to \Q_{>0}$ of locally finitely support, and the isomorphism above is given by sending a local matrix $\sA$ net to its dimension function $q(x) = \dim(\sA_x)^{1/2}$, see Example~\ref{ExampleCoordinateNets}.

Since $Q_{-1}(X) = K_0(\Az(X))$,  we get a canonical map 
\[
H\cX_0(X, \Q_{>0}) \longrightarrow Q_{-1}(X).
\]
Since $H\cX_k(\Z^n) \cong H\cX_{k-n}(*)$ (as is the case for all coarse homology theories), we see that for $X = \Z^n$, the left group is zero and so does not provide any non-trivial elements.

\bibliography{literature.bib}

\begin{thebibliography}{1}

\bibitem{BunkeEngel2020}
U.~Bunke and A.~Engel.
\newblock {\em Homotopy Theory with Bornological Coarse Spaces}, volume 2269 of
  {\em Lecture Notes in Mathematics}.
\newblock Springer, 2020.

\bibitem{Goto}
M.~Goto.
\newblock {A Theorem on compact semi-simple groups}.
\newblock {\em Journal of the Mathematical Society of Japan}, 1(3):270 -- 272,
  1949.

\bibitem{GrossNesmeVogtsWerner2012}
D.~Gross, V.~Nesme, H.~Vogts, and R.~F. Werner.
\newblock Index theory of one‐dimensional quantum walks and cellular
  automata.
\newblock {\em Communications in Mathematical Physics}, 310(2):419--454, 2012.
\newblock Received 21 September 2010, accepted 7 September 2011.

\bibitem{QCAspace}
M.~Ji and B.~Yang.
\newblock Quantum cellular automata: The group, the space, and the spectrum,
  2026.
\newblock \url{https://arxiv.org/pdf/2602.16572}.

\bibitem{Mitchener2002CoarseHomology}
P.~D. Mitchener.
\newblock Coarse homology theories.
\newblock {\em Algebraic \& Geometric Topology}, 2:271--297, 2002.

\bibitem{Quillen1973HigherK}
D.~Quillen.
\newblock Higher algebraic {$K$}-theory {I}.
\newblock In H.~Bass, editor, {\em Algebraic K-Theory I: Higher K-Theories},
  volume 341 of {\em Lecture Notes in Mathematics}, pages 85--147. Springer,
  1973.

\bibitem{ThomasonOnline}
R.~W. Thomason.
\newblock First quadrant spectral sequences in algebraic {$K$}-theory.
\newblock \url{https://webhomes.maths.ed.ac.uk/~v1ranick/papers/thomason1.pdf}.

\bibitem{Thomason1982}
R.~W. Thomason.
\newblock First quadrant spectral sequences in algebraic {$K$}-theory via
  homotopy colimits.
\newblock {\em Communications in Algebra}, 10(15):1589--1668, 1982.

\end{thebibliography}

\end{document}